\def\PZdefchar#1{
  \expandafter\def\csname frak#1\endcsname{\mathfrak{#1}}
  \expandafter\def\csname bf#1\endcsname{\mathbf{#1}}
  \expandafter\def\csname scr#1\endcsname{\mathcal{#1}}
  \expandafter\def\csname cal#1\endcsname{\mathcal{#1}}}
\def\PZdefloop#1{\ifx#1\PZdefloop\else\PZdefchar#1\expandafter\PZdefloop\fi}
\def\lesim{\lesssim}
\def\beq{\begin{equation}}
\def\endeq{\end{equation}}
\newcommand{\Bm}{\begin{multline}}
\newcommand{\Em}{\end{multline}}
\DeclarePairedDelimiterX\Set[1]\{\}{#1}
\DeclarePairedDelimiterXPP\EE[1]{\E}{\lparen}{\rparen}{}{#1} 
\newcommand\@avsum[2]{%
  {\sbox0{$\m@th#1\sum$}%
   \vphantom{\usebox0}%
   \ooalign{%
     \hidewidth
     \smash{\vrule height\dimexpr\ht0+1pt\relax depth\dimexpr\dp0+1pt\relax}%
     \hidewidth\cr
     $\m@th#1\sum$\cr
   }%
  }%
}
\newcommand{\avsum}{\mathop{\mathpalette\@avsum\relax}\displaylimits}
\newcommand\@avprod[2]{%
  {\sbox0{$\m@th#1\prod$}%
   \vphantom{\usebox0}%
   \ooalign{%
     \hidewidth
     \smash{\vrule height\dimexpr\ht0+1pt\relax depth\dimexpr\dp0+1pt\relax}%
     \hidewidth\cr
     $\m@th#1\prod$\cr
   }%
  }%
}
\newcommand{\avprod}{\mathop{\mathpalette\@avprod\relax}\displaylimits}
\newcommand{\@avL}[2]{%
\ooalign{{$\m@th#1\mbox{--}$}\cr {$\m@th#1 L$}\cr}}
\newcommand{\avL}{\mathpalette\@avL\relax}
\newcommand{\@avell}[2]{%
\ooalign{{$\m@th#1\mbox{--}$}\cr {$\m@th#1 \ell$}\cr}}
\newcommand{\avell}{\mathpalette\@avell\relax}
\newcommand{\@avD}{%
  \ooalign{{$\mathrm{D}$}\cr \hidewidth\raise.2ex\hbox{$\vert$}\hidewidth\cr}}
\newcommand{\avDec}{\@avD\mathrm{ec}}
\def\dist{\text{dist}}
\def\wb1{w_{B_{\delta^{-1}}}}
\numberwithin{equation}{section}
\newcommand{\cT}{\accentset{\circ}{T}}
\newcommand{\Z}[0]{\mathbb{Z}}
\newcommand{\R}[0]{\mathbb{R}}
\newcommand{\C}[0]{\mathbb{C}}
\newcommand{\N}[0]{\mathbb{N}}
\newcommand{\ta}[0]{\theta}
\newcommand{\ld}[0]{\lambda}
\newcommand{\vep}[0]{\varepsilon}
\newcommand{\lsm}[0]{\lesssim}
\newcommand{\mc}[1]{\mathcal{#1}}
\newcommand{\ov}[1]{\overline{#1}}
\newcommand{\wt}[1]{\widetilde{#1}}
\newcommand{\st}[1]{\substack{#1}}
\newcommand{\nms}[1]{\| #1 \|}
\newcommand{\E}[0]{\mathcal{E}}
\definecolor{orange}{rgb}{1,0.5,0}
\newtheorem{thm}{Theorem}[section]
\newtheorem{lemma}[thm]{Lemma}
\newtheorem{coro}[thm]{Corollary}
\theoremstyle{remark}
\newtheorem{rem}{Remark}
\begin{document}
\title{A bilinear proof of decoupling for the cubic moment curve}
\author[S. Guo \  \ Z. Li \ \ P.-L. Yung] {Shaoming Guo, \ \ Zane Kun Li,\ \ Po-Lam Yung}
\date{\today}

\address{Shaoming Guo: Department of Mathematics, University of Wisconsin-Madison, Madison, WI-53706, USA}
\email{shaomingguo@math.wisc.edu}

\address{Zane Kun Li: Department of Mathematics, Indiana University Bloomington, Bloomington, IN-47405, USA}
\email{zkli@iu.edu}

\address{Po-Lam Yung: Department of Mathematics, The Chinese University of Hong Kong, Shatin, Hong Kong \quad \textit{and} \quad Mathematical Sciences Institute, The Australian National University, Canberra, Australia}
\email{plyung@math.cuhk.edu.hk \quad \textit{and} \quad polam.yung@anu.edu.au}

\subjclass[2010]{11L07, 42B20, 42B25}

\begin{abstract}
Using a bilinear method that is inspired by the method of efficient congruencing of Wooley \cite{Woo16}, we prove a sharp decoupling inequality for the moment curve in $\R^3$.
\end{abstract}

\maketitle

\section{Introduction}

For an interval $J \subset [0, 1]$, define an extension operator
\begin{align*}
(\E_{J}g)(x) := \int_{J}g(\xi)e(x\cdot \gamma(\xi))\, d\xi
\end{align*}
where $x=(x_1, x_2, x_3)\in \R^3$, $\gamma(\xi)=(\xi, \xi^2, \xi^3)$ and $e(z) := e^{2\pi i z}$ for a real number $z\in \R$. For $\delta \in \N^{-1}$, let $P_{\delta}([0, 1])$ denote the partition of $[0, 1]$ into intervals of length $\delta$. Moreover, let $D(\delta)$ be the smallest constant such that
\begin{align}\label{decdef}
\nms{\E_{[0, 1]}g}_{L^{12}(\R^3)} \leq D(\delta)(\sum_{J \in P_{\delta}([0, 1])}\nms{\E_{J}g}_{L^{12}(\R^3)}^{4})^{1/4}
\end{align}
holds for all functions $g: [0, 1] \rightarrow \C$.
From Drury \cite{Dru85}, both sides are finite at least for smooth $g$. An inequality of this form is called an $\ell^4 L^{12}$ decoupling inequality.
Our goal will be to show the following result, which proves a sharp $\ell^{4}L^{12}$ decoupling theorem for
the moment curve $t \mapsto (t, t^2, t^3)$.
\begin{thm}\label{main}
	For every $\vep>0$ and every $\delta \in \N^{-1}$, there exists a constant $C_{\varepsilon}>0$ such that
	\begin{equation}\label{mainresult}
	D(\delta) \le C_{\varepsilon} \delta^{-\frac{1}{4} - \vep}.
	\end{equation}
    The constant $C_{\varepsilon}$ depends only on $\vep$.
\end{thm}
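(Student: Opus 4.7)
The plan is to adapt Wooley's method of efficient congruencing to the decoupling setting by running a bilinear iteration on the constant $D(\delta)$. First, I would introduce a bilinear version of the decoupling inequality: for two intervals $I_1, I_2 \subset [0,1]$ of length $\sigma$ that are \emph{transverse} (say separated by an absolute constant), let $M(\delta, \sigma)$ be the best constant such that
\[
\bigl\| \E_{I_1} g \cdot \E_{I_2} g \bigr\|_{L^6(\R^3)} \le M(\delta, \sigma) \Bigl( \sum_{J_1 \subset I_1} \sum_{J_2 \subset I_2} \bigl\| \E_{J_1} g \cdot \E_{J_2} g \bigr\|_{L^6(\R^3)}^{2} \Bigr)^{1/2},
\]
where $J_i$ range over the $\delta$-subintervals of $I_i$. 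A bilinear-to-linear reduction, standard in decoupling theory, pigeonholes the intervals in $P_\delta([0,1])$ into transverse pairs plus "close" pairs; the close pairs are absorbed via parabolic rescaling (the affine invariance of the cubic moment curve), reducing the estimate $D(\delta) \lesssim_\varepsilon \delta^{-1/4-\varepsilon}$ to a corresponding bound for $M(\delta, 1)$.

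Second, the heart of the matter is to derive an iteration inequality relating $M$ at scale $\delta$ to $M$ at a finer scale $\delta^{1/2}$ (or, more generally, $\delta^\alpha$ for $\alpha < 1$), with bookkeeping parameters tracking how the separation $\sigma$ shrinks as we descend. The ingredients that I expect to use are: (i) H\"older's inequality, to interpolate the $L^6$ bilinear norm between an $L^2$ and an $L^{12}$ endpoint; (ii) an $L^2$ orthogonality/Plancherel step at the coarser scale, which is essentially free thanks to almost-disjointness of supports in frequency; (iii) a ball inflation / local-to-global uncertainty-principle step, which upgrades local $L^p$-averages on balls of radius $\delta^{-1}$ to averages on balls of radius $\delta^{-1/2}$; and (iv) the known sharp $\ell^2 L^6$ decoupling for the parabola in $\R^2$, applied to a projected version of the cubic curve (this is the ``lower-dimensional decoupling'' input that feeds the induction). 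Combining (i)--(iv) in the correct order should give an inequality of the schematic form
\[
M(\delta, \sigma) \lesssim_\varepsilon \delta^{-\varepsilon} \sigma^{-a} \, M(\delta^{1/2}, \sigma')^{b},
\]
with explicit exponents $a, b$ (and a tracked $\sigma'$) that encode the geometry of the three scales $\xi, \xi^2, \xi^3$ governing the cubic moment curve.

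Third, I would close the argument by iterating this bilinear inequality. Let $\eta \ge 0$ be the infimal exponent for which $D(\delta) \lesssim_\varepsilon \delta^{-1/4-\eta-\varepsilon}$ holds for all $\delta$; Theorem~\ref{main} is the statement that $\eta = 0$. The iteration should yield a bootstrap of the form $\eta \le c \eta$ with $c \in (0,1)$, whence $\eta = 0$; equivalently, one can iterate finitely many times and land at a trivial scale where $D$ is bounded by absolute constants. The main obstacle I anticipate is \emph{efficiency}: the exponents $a, b$ in the iteration inequality must be balanced precisely so that no loss accumulates after many iterations. This is the decoupling analogue of the ``efficient'' step in Wooley's congruencing, where one must choose nested $p$-adic congruence conditions so that the degree, rather than just some coarser count, drives the recursion. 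Getting the bilinear $L^6$ step, the ball inflation radius, and the parabola decoupling to balance against the Knapp example lower bound $D(\delta) \gtrsim \delta^{-1/4}$ is where the cubic structure of the curve must be exploited in a non-lossy way.
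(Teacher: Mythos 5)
Your outline is not the route this paper takes: it is essentially the Bourgain--Demeter--Guth scheme (symmetric bilinear constant with both intervals at the same scale $\sigma$, $\ell^2$ sums, sharp $\ell^2L^6$ parabola decoupling as the lower-dimensional input, ball inflation from radius $\delta^{-1}$ to $\delta^{-1/2}$). The paper instead works with two \emph{asymmetric, two-scale} bilinear quantities $\mc{M}_{1,a,b}$ and $\mc{M}_{2,a,b}$, in which the two intervals have lengths $\nu^a$ and $\nu^b$ and the $L^{12}$ integral is split as $2+10$ and $4+8$ respectively, each factor averaged over its own dual box $\cT_I$ of dimensions $|I|^{-1}\times|I|^{-2}\times|I|^{-3}$. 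The iteration is the pair of ``raising'' steps $\mc{M}_{2,a,b}\to\mc{M}_{2,2b-a,b}$ and $\mc{M}_{1,a,b}\to\mc{M}_{1,3b,b}$ (the decoupling analogues of Heath-Brown's Lemmas 5 and 6), glued by H\"older/Bernstein interchanges between $\mc{M}_1$ and $\mc{M}_2$, and closed by a linear-algebra computation of the iterated affine map on the exponent pair $(\alpha_N,\beta_N)$. Critically, the lower-dimensional input here is a sharp \emph{small-ball $\ell^4L^4$} decoupling for the parabola, not $\ell^2L^6$; this is exactly why the paper proves $\ell^4L^{12}$ decoupling and explicitly cannot recover the stronger $\ell^2L^{12}$ statement (Remark \ref{remark_2_20190607}).

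Judged on its own terms, your proposal has a genuine gap rather than merely being a different (valid) route. The entire content of the proof is the iteration inequality, and you leave it as ``combining (i)--(iv) in the correct order should give'' a schematic bound with unspecified exponents $a,b$ --- but balancing those exponents against the Knapp lower bound $\delta^{-1/4}$ is precisely where the argument succeeds or fails, as you yourself note. Moreover, the specific shape you propose is calibrated to the parabola, not the cubic curve: a single ball inflation from radius $\delta^{-1}$ to $\delta^{-1/2}$ and a one-parameter descent $\delta\to\delta^{1/2}$ cannot see the three anisotropic scales $\delta^{-1}\times\delta^{-2}\times\delta^{-3}$ that govern $\gamma(\xi)=(\xi,\xi^2,\xi^3)$. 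Any working argument for degree $3$ (whether BDG's or this paper's) must track a two-parameter family of scales --- here the pairs $(a,b)$ with $a$ ranging up to $3b$, requiring three separate geometric lemmas for the regimes $a\le b$, $b\le a\le 2b$, $2b\le a\le 3b$ --- and must at some point exploit transversality of $\gamma'(\xi_1),\gamma'(\xi_2),\gamma''(\xi_2)$ quantitatively in the separation $|\xi_1-\xi_2|\ge\nu$. Without specifying how your single-step recursion encodes this, the bootstrap $\eta\le c\eta$ cannot be verified, so the proof is not complete.
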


By a standard argument (see Section 4 of \cite{BDG16}), Theorem \ref{main} implies that
\beq\label{vinogradov_d}
\int_{[0, 1]^d} \Big|\sum_{j=1}^X e(x_1 j+x_2 j^2+\dots+x_d j^d)\Big|^{d(d+1)} dx_1 d x_2 \dots dx_d \le C_{\varepsilon} X^{\frac{d(d+1)}{2}+\varepsilon},
\endeq
for $d=3$, every positive integer $X$, every $\varepsilon>0$ and some constant $C_{\varepsilon}$ depending on $\varepsilon$. Indeed, (\ref{decdef}) implies that if $F = \sum_{J \in P_{\delta}([0,1])} F_J$ where $\widehat{F_J}$ is supported in a $\delta^3$ neighborhood of the image of $J$ under $\gamma$, then
$$
\|F\|_{L^{12}(\R^3)} \lesssim D(\delta) (\sum_{J \in P_{\delta}([0, 1])}\nms{F_J}_{L^{12}(\R^3)}^{4})^{1/4},
$$
which implies (\ref{vinogradov_d}) for $d = 3$ upon setting $\delta = 1/X$ and $$F_J(x) := \phi(x/X^3) e(x \cdot \gamma(j/X))$$ for every $J = [j/X,(j+1)/X) \in P_{\delta}([0,1])$; here $\phi$ is a Schwartz function on $\R^3$ with $\phi \geq 1$ on $[0,1]^3$, and $\widehat{\phi}$ supported on the unit ball centered at the origin.
Therefore, we recover the sharp  Vinogradov mean value estimate in $\R^3$, which was first proven by Wooley \cite{Woo16}, using the method of efficient congruencing. Later, Bourgain, Demeter and Guth \cite{BDG16} recovered \eqref{vinogradov_d} at $d=3$ and proved it for every $d\ge 4$, by using the method of decoupling. We also refer to Wooley \cite{Woo18} for a proof of \eqref{vinogradov_d} for every $d\ge 3$ using the method of efficient congruencing.

In order to prove \eqref{vinogradov_d} at $d=3$, Bourgain, Demeter and Guth first proved a stronger version of the decoupling inequality \eqref{mainresult}. To be precise, by Minkowski's inequality, the main result of \cite{BDG16} gives rise to
\beq\label{decoupling_l_2}
\nms{\E_{[0, 1]}g}_{L^{12}(\R^3)} \le C_{\varepsilon} \delta^{-\varepsilon} (\sum_{J \in P_{\delta}([0, 1])}\nms{\E_{J}g}_{L^{12}(\R^3)}^{2})^{1/2},
\endeq
for every $\varepsilon>0$. By H\"older's inequality, it is not difficult to see that \eqref{decoupling_l_2} implies \eqref{mainresult}. Moreover, by using the standard argument in Section 4 of \cite{BDG16}, \eqref{decoupling_l_2} implies \eqref{vinogradov_d} at $d=3$ just like \eqref{mainresult}. In other words, \eqref{mainresult} and \eqref{decoupling_l_2} have the same strength when deriving exponential sum estimates of the form \eqref{vinogradov_d}.

The proof of \eqref{decoupling_l_2} in \cite{BDG16} relies on multilinear methods, in particular multilinear Kakeya estimates (see for instance \cite{BCT06}, \cite{Guth15} and \cite{BBFL17}), while ours relies on a bilinear method, which involves only elementary geometric observations (see \eqref{eq:tube_intersect}).\\

The methods of efficient congruencing and decoupling use different languages: One uses the language of number theory, while the other uses purely harmonic analysis. It is a very natural and interesting question to ask whether understanding one method better could enhance our understanding of the other method.
The relation between decoupling for the parabola and efficient congruencing was studied by the second author in \cite{Li18}.
The goal of this paper is to study the relation between these two methods in the case of the cubic moment curve. In particular, we
provide a new proof of the decoupling inequality \eqref{mainresult} by using a method that is inspired by the method of efficient congruencing. Unfortunately, the new argument does not fully recover the slightly stronger decoupling inequality \eqref{decoupling_l_2}. This will be explained later in Remark \ref{remark_2_20190607} in Section \ref{20190607sub4.1}.
One significant difference between the proof here and the proof in \cite{BDG16} is that the lower dimensional input for our proof comes from a sharp ``small ball" $\ell^{4}L^{4}$ decoupling for the parabola rather than a sharp $\ell^2 L^6$ decoupling for the parabola as in \cite{BDG16}.

The authors benefited very much from the note \cite{HB15} written by Heath-Brown. In the note, Heath-Brown simplified Wooley's efficient congruencing in $\R^3$. In the current paper, we follow the structure of \cite{HB15}. We will also point out (in Section \ref{properties}) the one-to-one correspondence between main lemmas that are used in \cite{HB15} and those used in the current paper. \\

After the submission of this manuscript, the authors in collaboration with Pavel Zorin-Kranich were inspired by nested efficient congruencing \cite{Woo18} and found a proof
of sharp $\ell^{2}L^{k(k + 1)}$ decoupling for the moment curve $(t, t^2, \ldots, t^k)$ and $k \geq 2$, see \cite{GLYZK19} for more details.
This gives a much shorter and technically simpler proof of decoupling for the moment curve than the one in \cite{BDG16}.\\

{\bf Organization of paper.} In Section \ref{properties}, we will introduce the main quantities that will play crucial roles in the later proof, list the main properties of these quantities, and prove a few of them that are simple. The two key properties (Lemma \ref{bilinear1} and Lemma \ref{cor_small_ball_bi}) will be proven in Section \ref{section:tube_slab_proof} and Section \ref{sect:M2ab} respectively. After proving all these lemmas, we will use them to run an iteration argument and finish the proof of the main theorem. This step will be carried out in Section \ref{last_section}.

\bigskip

{\bf Notation.}
Give two nonnegative expressions $X$ and $Y$, by $X \lsm Y$ and $Y \gtrsim X$ we mean that there is some absolute constant $C$ such that $X \leq CY$. If $C$ depends on some additional parameters we will denote this dependence using subscripts, so for example $X \lsm_{E} Y$ means that $X \leq C_{E}Y$ for some constant $C_E$ depending on $E$.
We let $X \sim Y$ to mean that $X \lsm Y$ and $Y \lsm X$.

For a frequency interval $I$, we will use $|I|$ to denote its length. We use $P_{\delta}(I)$ to denote the partition of $I$ into intervals of length $\delta$. This implicitly assumes $|I|/\delta \in \N$. If $I=[0, 1]$, we usually
omit $[0, 1]$ and just write $P_{\delta}$ rather than $P_{\delta}([0, 1])$. For a spatial cube $B \subset \R^3$, we also use $P_{R}(B)$ to denote the partition of $B$ into cubes of side length $R$.
By $B(c, R)$, we will mean a square (or cube depending on context) centered at $c$ of side length $R$.
For a parallelpiped $T$ in $\R^3$ and a constant $c$, we let $cT$ be the dilate of $T$ where the side lengths are $c$ times larger but has the same center as $T$.

Let $E > 10^3$ be a large integer.
Let $T$ be a parallelepiped where $T = A[0,1]^3 + c$ for some $3 \times 3$ invertible matrix $A$ and some vector $c \in \R^3$. In the current paper, the columns of $A$ will be almost at right angles to each other, but can have different lengths. We write
$$
w_{T, E}(x) := (1 + |A^{-1}(x-c)|)^{-E}.
$$
for a weight that is comparable to 1 on $T$ and decays like the (non-isotropic) distance to the power $E$ outside $T$. Also write
$$
\widebar{w}_{T, E}(x) := w_{T,3E}(x) = (1 + |A^{-1}(x-c)|)^{-3E}
$$
for a weight with a faster decay.
One key property we will use about these weights is that, if $\{T\}$ is a collection of parallelepipeds that tiles a spatial cube $B \subset \R^3$, then
\begin{equation} \label{eq:wt_ineq}
\sum_T \widebar{w}_{T,E} \lesssim_E w_{B,E},
\end{equation}
with a constant that depends only on $E$.
The volume of $T$ is $|T| = |\det A|$, and we write
$$
\phi_{T, E}(x):=\frac{1}{|T|} (1 + |A^{-1}(x-c)|)^{-E}
$$
for an $L^1$ normalized version of $w_{T,E}$, that is essentially supported on $T$.

\bigskip

{\bf Acknowledgements.} Guo was supported in part by a direct grant for research from the Chinese University of Hong Kong (4053295) and NSF grant DMS-1800274. Li would like to thank his advisor Terence Tao for many discussions and support. He was supported in part by NSF grant DMS-1902763. He would also like to thank Kirsti Biggs and Sarah Peluse for some preliminary discussions related to the subject of this paper. Finally, Li would also like to thank the Department of Mathematics of the Chinese University of Hong Kong for their kind hospitality during his visit, where part of this work was done. Yung was supported in part by a General Research Fund CUHK14303817 from the Hong Kong Research Grant Council, and a direct grant for research from the Chinese University of Hong Kong (4053295). Yung would also like to thank Chin-Ho Cheung with whom he discussed efficient congruencing at a preliminary stage of this project.
Finally, the authors would like to thank the referee for detailed comments and suggestions.

\section{Main quantities and their properties}\label{properties}

For an interval $I\subset [0, 1]$, let $c_I$ denote the center of $I$. Let $\cT_I$ denote the parallelepiped that is centered at the origin, of dimension $|I|^{-1}\times |I|^{-2}\times |I|^{-3}$,
given by $$\cT_I := \{x \in \R^3 \colon |x \cdot \gamma'(c_I)| \leq |I|^{-1}, |x \cdot \gamma''(c_I)| \leq |I|^{-2}, |x \cdot \gamma'''(c_I)| \leq |I|^{-3} \}.$$
For an extremely small number $\delta$ and $\delta^{\varepsilon}\ll \nu\ll 1$ (throughout the paper we will assume that $\delta^{-1}, \nu^{-1} \in \N$), define the following two bilinear decoupling constants. For $a, b \in \N$, let $\mc{M}_{1, a, b}(\delta, \nu, E)$ and $\mc{M}_{2, a, b}(\delta, \nu, E)$ be the best constant such that
\beq
\begin{split}
 \int_{\R^3}\big( |\E_{I}g|^{2}*\phi_{\cT_I, E} \big) &\big( |\E_{I'}g|^{10}* \phi_{\cT_{I'}, E}\big) \leq \mc{M}_{1, a, b}(\delta, \nu, E)^{12}\\
& \times \bigg( \sum_{J \in P_{\delta}(I)} \|\E_J g\|_{L^{12}(\R^3)}^4 \bigg)^{1/2} \bigg( \sum_{J' \in P_{\delta}(I')} \|\E_{J'} g\|_{L^{12}(\R^3)}^4 \bigg)^{5/2}
\end{split}
\endeq
and
\beq
\begin{split}
 \int_{\R^3}\big( |\E_{I}g|^{4}*\phi_{\cT_I, E} \big) &\big( |\E_{I'}g|^{8}* \phi_{\cT_{I'}, E}\big) \leq \mc{M}_{2, a, b}(\delta, \nu, E)^{12}\\
& \times \bigg( \sum_{J \in P_{\delta}(I)} \|\E_J g\|_{L^{12}(\R^3)}^4 \bigg) \bigg( \sum_{J' \in P_{\delta}(I')} \|\E_{J'} g\|_{L^{12}(\R^3)}^4 \bigg)^{2}
\end{split}
\endeq
hold separately, for all functions $g: [0, 1] \rightarrow \C$,
and all pairs of intervals $I \in P_{\nu^a}([0, 1])$, $I' \in P_{\nu^b}([0, 1])$ with $d(I, I') \ge 2\nu$.
Note that expressions such as $|\E_{I}g|^{2} \ast \phi_{\cT_{I}, E}$ above are constant
(up to a $O_{E}(1)$ multiplicative factor) on any $|I|^{-1} \times |I|^{-2} \times |I|^{-3}$ parallelpiped
parallel to $\cT_{I}$. \\

In this section and the next two sections (but not in the last section, Section \ref{last_section}), $C_0$ is a large absolute constant whose
precise value is not important and may vary from line to line.

\begin{lemma}[Affine rescaling, cf. Lemma 1 of \cite{HB15}]\label{parab}
Let $0 < \delta < \sigma < 1$ be such that $\delta/\sigma \in \N^{-1}$.
Let $I$ be an arbitrary interval in $[0, 1]$ of length $\sigma$. Then
\begin{align*}
\nms{\E_{I}g}_{L^{12}(\R^3)} \leq D(\frac{\delta}{\sigma})(\sum_{J \in P_{\delta}(I)}\nms{\E_{J}g}_{L^{12}(\R^3)}^{4})^{1/4}
\end{align*}
for all $g: [0, 1] \rightarrow \C$.
\end{lemma}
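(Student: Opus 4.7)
The plan is standard affine rescaling, exploiting the fact that the moment curve $\gamma(\xi)=(\xi,\xi^2,\xi^3)$ is affinely self-similar under the non-isotropic dilations adapted to it. Write $I=[a,a+\sigma]$ and substitute $\xi = a+\sigma\eta$ in the integral defining $\mathcal{E}_I g$. Expanding $\gamma(a+\sigma\eta)$ by the binomial theorem, one obtains the identity
\[
\gamma(a+\sigma\eta)=\gamma(a)+A\gamma(\eta),\qquad A=\begin{pmatrix}\sigma & 0 & 0\\ 2a\sigma & \sigma^2 & 0\\ 3a^2\sigma & 3a\sigma^2 & \sigma^3\end{pmatrix},
\]
so $A$ is lower-triangular with $|\det A|=\sigma^6$. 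Setting $\tilde g(\eta):=g(a+\sigma\eta)$ for $\eta\in[0,1]$, this gives
\[
\mathcal{E}_I g(x)=\sigma\, e(x\cdot\gamma(a))\, \mathcal{E}_{[0,1]}\tilde g(A^T x),
\]
and the same calculation performed on each $J=a+\sigma J'\in P_\delta(I)$ with $J'\in P_{\delta/\sigma}([0,1])$ yields $\mathcal{E}_J g(x)=\sigma\, e(x\cdot\gamma(a))\,\mathcal{E}_{J'}\tilde g(A^T x)$.

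Next I take $L^{12}(\R^3)$ norms and change variables $y=A^T x$; since $|\det A^T|=\sigma^6$, each such identity produces
\[
\|\mathcal{E}_I g\|_{L^{12}(\R^3)}=\sigma^{1/2}\|\mathcal{E}_{[0,1]}\tilde g\|_{L^{12}(\R^3)},\qquad \|\mathcal{E}_J g\|_{L^{12}(\R^3)}=\sigma^{1/2}\|\mathcal{E}_{J'}\tilde g\|_{L^{12}(\R^3)}.
\]
The hypothesis $\delta/\sigma\in\N^{-1}$ allows me to apply the definition of $D(\delta/\sigma)$ to $\tilde g$ on $[0,1]$, giving
\[
\|\mathcal{E}_{[0,1]}\tilde g\|_{L^{12}(\R^3)}\le D(\delta/\sigma)\Bigl(\sum_{J'\in P_{\delta/\sigma}([0,1])}\|\mathcal{E}_{J'}\tilde g\|_{L^{12}(\R^3)}^4\Bigr)^{1/4}.
\]
Substituting both scaling relations into this inequality, the factor of $\sigma^{1/2}$ on the left cancels the $\sigma^{-1/2}$ coming from the inverse scaling on the right term by term, yielding the claimed bound.

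There is essentially no obstacle: the entire content lies in the explicit factorization $\gamma(a+\sigma\eta)-\gamma(a)=A\gamma(\eta)$, which is precisely the feature that makes the moment curve amenable to decoupling theory. I would include the matrix $A$ in the writeup and verify $|\det A|=\sigma^6$ explicitly, but skip the routine verification of the change-of-variables calculation for the $L^{12}$ norm.
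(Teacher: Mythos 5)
Your proof is correct: the factorization $\gamma(a+\sigma\eta)-\gamma(a)=A\gamma(\eta)$ with $|\det A|=\sigma^6$, the resulting identity $\E_I g(x)=\sigma\,e(x\cdot\gamma(a))\,\E_{[0,1]}\tilde g(A^Tx)$, and the exact cancellation of the $\sigma^{1/2}$ factors constitute precisely the standard parabolic rescaling argument that the paper omits and cites from the references. No gaps.
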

The proof of this lemma is standard so we omit the proof
(see for example \cite[Propositon 4.1]{BD15}, \cite[Proposition 7.1]{BD17}, or \cite[Section 3.1]{Li17}).

One corollary of affine rescaling is almost multiplicativity of $D(\delta)$. This allows us to patch together the
various integrality constraints that appear throughout our argument.
\begin{coro}[Almost multiplicativity]\label{almult}
Suppose $\delta_1, \delta_2 \in \N^{-1}$, then
$$D(\delta_1\delta_2) \leq D(\delta_1)D(\delta_2).$$
\end{coro}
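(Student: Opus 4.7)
The plan is to obtain almost multiplicativity by cascading decoupling at scale $\delta_1$ with parabolic rescaling on each of the resulting pieces, a standard two-step argument. The only nontrivial input is Lemma \ref{parab}; everything else is bookkeeping.

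First, I would apply the definition of $D(\delta_1)$ to an arbitrary $g\colon[0,1]\to\C$ to obtain
\[
\nms{\E_{[0,1]}g}_{L^{12}(\R^3)} \le D(\delta_1)\Bigl(\sum_{I\in P_{\delta_1}([0,1])}\nms{\E_I g}_{L^{12}(\R^3)}^{4}\Bigr)^{1/4}.
\]
Next, for each $I\in P_{\delta_1}([0,1])$ (an interval of length $\sigma=\delta_1$), I would invoke parabolic rescaling (Lemma \ref{parab}) at the finer scale $\delta=\delta_1\delta_2$, noting that $\delta/\sigma=\delta_2\in\N^{-1}$ by the hypothesis $\delta_1,\delta_2\in\N^{-1}$, to obtain
\[
\nms{\E_I g}_{L^{12}(\R^3)} \le D(\delta_2)\Bigl(\sum_{J\in P_{\delta_1\delta_2}(I)}\nms{\E_J g}_{L^{12}(\R^3)}^{4}\Bigr)^{1/4}.
\]
Raising to the fourth power, summing over $I\in P_{\delta_1}([0,1])$, and using that $\{P_{\delta_1\delta_2}(I)\}_{I\in P_{\delta_1}}$ partitions $P_{\delta_1\delta_2}([0,1])$, I would combine the two inequalities to get
\[
\nms{\E_{[0,1]}g}_{L^{12}(\R^3)} \le D(\delta_1)D(\delta_2)\Bigl(\sum_{J\in P_{\delta_1\delta_2}([0,1])}\nms{\E_J g}_{L^{12}(\R^3)}^{4}\Bigr)^{1/4}.
\]
Since this holds for every $g$, the minimality in the definition of $D(\delta_1\delta_2)$ yields $D(\delta_1\delta_2)\le D(\delta_1)D(\delta_2)$.

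There is essentially no obstacle here: the only subtlety is verifying the integrality condition $\delta_1\delta_2/\delta_1\in\N^{-1}$ required by Lemma \ref{parab}, which follows from the assumption $\delta_1,\delta_2\in\N^{-1}$. This is precisely why the corollary is stated as a consequence of parabolic rescaling.
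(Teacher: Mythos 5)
Your proof is correct and is precisely the intended derivation: the paper states this as an immediate corollary of Lemma \ref{parab} without writing out the details, and your two-step argument (decouple to scale $\delta_1$ by definition, then rescale each piece to scale $\delta_1\delta_2$) is the standard way to fill them in. The integrality check $\delta/\sigma=\delta_2\in\N^{-1}$ is handled correctly.
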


\begin{lemma}[Bilinear reduction, cf. Lemma 2 of \cite{HB15}]\label{bilinear}
If $\delta$ and $\nu$ were such that $\nu\delta^{-1} \in \N$, then
\begin{align*}
D(\delta) \lsm_E \nu^{-1/4}D(\frac{\delta}{\nu}) + \nu^{-1}\mc{M}_{2, 1, 1}(\delta, \nu, E).
\end{align*}
\end{lemma}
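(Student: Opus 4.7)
I will run a broad/narrow dichotomy at scale $\nu$, then apply parabolic rescaling (Lemma~\ref{parab}) to the narrow part and the definition of $\mc{M}_{2, 1, 1}$ to the broad part. Writing $\E_{[0, 1]} g = \sum_{I \in P_\nu} \E_I g$ and fixing $x \in \R^3$, let $I^*(x) \in P_\nu([0, 1])$ be an interval maximizing $|\E_I g(x)|$. Either $\max_{I:\, d(I, I^*) \geq \nu} |\E_I g(x)| \leq (\nu/2)\, |\E_{[0, 1]} g(x)|$ (the \emph{narrow} alternative), or the maximum exceeds $(\nu/2)\,|\E_{[0, 1]} g(x)|$ at some $I'$ with $d(I^*, I') \geq \nu$ (the \emph{broad} alternative). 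In the narrow case, the $\nu$-separated intervals together contribute at most $\tfrac{1}{2}|\E_{[0, 1]} g(x)|$ (there are $\leq \nu^{-1}$ of them), while the at most three intervals within distance $\nu$ of $I^*$ contribute at most $3 \max_I |\E_I g(x)|$, so $|\E_{[0, 1]} g(x)| \lesssim \max_I |\E_I g(x)|$. In the broad case, the trivial bound $|\E_{I^*} g(x)| \geq \nu |\E_{[0, 1]} g(x)|$ together with $|\E_{I'} g(x)| \gtrsim \nu |\E_{[0, 1]} g(x)|$ and the split $12 = 4 + 8$ yields $|\E_{[0, 1]} g(x)|^{12} \lesssim \nu^{-12} |\E_{I^*} g(x)|^4 |\E_{I'} g(x)|^8$. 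Combining,
\[
|\E_{[0, 1]} g(x)|^{12} \lesssim \max_{I \in P_\nu} |\E_I g(x)|^{12} + \nu^{-12} \sum_{\substack{I, I' \in P_\nu([0,1]) \\ d(I, I') \geq \nu}} |\E_I g(x)|^4\, |\E_{I'} g(x)|^8.
\]

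\textbf{Narrow term.} Set $S := \sum_{J \in P_\delta} \|\E_J g\|_{L^{12}}^4$. Since $\max_I |\E_I g|^{12} \leq \sum_I |\E_I g|^{12}$, integration gives $\int_{\R^3} \max_I |\E_I g|^{12} \leq \sum_{I \in P_\nu} \|\E_I g\|_{L^{12}}^{12}$. Applying Lemma~\ref{parab} on each $I$ yields $\|\E_I g\|_{L^{12}}^{12} \leq D(\delta/\nu)^{12} \bigl(\sum_{J \in P_\delta(I)} \|\E_J g\|_{L^{12}}^4\bigr)^{\!3}$, and then using $\sum_I x_I^3 \leq (\sum_I x_I)^3$ for nonnegative $x_I$ the total narrow contribution is at most $D(\delta/\nu)^{12}\, S^3$, which after taking twelfth roots is $\leq \nu^{-1/4} D(\delta/\nu)\, S^{1/4}$.

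\textbf{Broad term.} Fix $(I, I') \in P_\nu \times P_\nu$ with $d(I, I') \geq \nu$. Since $\widehat{\E_I g}$ is supported in an $O(\nu \times \nu^2 \times \nu^3)$-neighborhood of $\gamma(I)$ and $\cT_I$ is its dual parallelepiped, the standard locally constant property produces the pointwise inequality $|\E_I g(x)|^4 \lesssim_E (|\E_I g|^4 * \phi_{\cT_I, E})(x)$, and similarly for $|\E_{I'} g|^8$. Then the definition of $\mc{M}_{2, 1, 1}(\delta, \nu, E)$ applies, and summing over pairs via $\sum_{I, I'} A_I B_{I'}^2 \leq (\sum_I A_I)(\sum_{I'} B_{I'})^2 \leq S^3$ (with $A_I := \sum_{J \in P_\delta(I)} \|\E_J g\|_{L^{12}}^4$ and similarly $B_{I'}$) gives
\[
\nu^{-12}\sum_{\substack{I, I' \in P_\nu \\ d(I, I') \geq \nu}} \int_{\R^3} |\E_I g|^4\, |\E_{I'} g|^8 \;\lesssim_E\; \nu^{-12}\, \mc{M}_{2, 1, 1}(\delta, \nu, E)^{12}\, S^3.
\]
After twelfth roots this contributes $\nu^{-1}\, \mc{M}_{2, 1, 1}(\delta, \nu, E)\, S^{1/4}$.

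\textbf{Combining and main obstacle.} Adding the two estimates gives $\|\E_{[0, 1]} g\|_{L^{12}} \lesssim_E (\nu^{-1/4} D(\delta/\nu) + \nu^{-1} \mc{M}_{2, 1, 1}(\delta, \nu, E))\, S^{1/4}$, which is the claimed bound. The only nontrivial technical input is the locally constant estimate $|\E_I g|^4 \lesssim_E |\E_I g|^4 * \phi_{\cT_I, E}$ used to insert the smoothing; this is routine since $\cT_I$ is dual to the Fourier support of $\E_I g$, but the $O_E(1)$ constant must be uniform over $I \in P_\nu$. The broad/narrow split itself is elementary, the only care being to separate out the (at most three) intervals near $I^*$ that cannot be paired bilinearly with it.
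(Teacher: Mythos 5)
Your proof is correct, and it reaches the two terms of the lemma by a genuinely different mechanism than the paper. The paper writes $\nms{\E_{[0,1]}g}_{L^{12}}=\nms{\sum_{J,J'}\E_Jg\ov{\E_{J'}g}}_{L^6}^{1/2}$ and splits the double sum into near-diagonal pairs (handled by the triangle inequality, giving an $\ell^2$ sum of $\nms{\E_Jg}_{L^{12}}^2$, then parabolic rescaling and H\"older at the cost of $\nu^{-1/4}$) and $\nu$-separated pairs (a max times the pair count $\nu^{-2}$, which becomes $\nu^{-1}$ after the square root); since this produces the symmetric product $\int|\E_Jg|^6|\E_{J'}g|^6$, the paper must then apply Cauchy--Schwarz to convert the $6+6$ split into the $4+8$ split appearing in the definition of $\mc{M}_{2,1,1}$. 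You instead run a pointwise Bourgain--Guth broad/narrow dichotomy, which has two small advantages: the narrow term is controlled by $\max_I|\E_Ig|$ rather than an $\ell^2$ sum, so after rescaling you actually get $D(\delta/\nu)S^{1/4}$ with no $\nu^{-1/4}$ loss (harmless either way), and the broad case lets you choose the exponents $4+8$ directly, so no symmetrization step is needed; your $\nu^{-1}$ comes from the pointwise pigeonholing lower bounds $|\E_{I^*}g|,|\E_{I'}g|\gtrsim\nu|\E_{[0,1]}g|$ rather than from counting pairs. Both routes rely on the same two inputs, Lemma~\ref{parab} and the locally constant estimate \eqref{point_wise}, and the latter is indeed uniform over $I\in P_\nu$ by the affine invariance of the construction (one convolves $\E_Ig$ with a Schwartz bump adapted to the dual box of the Fourier support and applies H\"older), exactly as the paper does in its own proof.
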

\begin{proof}
We have
\begin{align*}
\nms{\E_{[0, 1]}g}_{L^{12}(\R^3)} &= \nms{\sum_{J, J' \in P_{\nu}}\E_{J}g\ov{\E_{J'}g}}_{L^{6}(\R^3)}^{1/2}\\
&\lsm (\sum_{J \in P_{\nu}}\nms{\E_{J}g}_{L^{12}(\R^3)}^{2})^{1/2} + \nu^{-1}\max_{\st{J, J' \in P_{\nu}\\d(J, J') \geq 2\nu}}\nms{\E_{J}g\E_{J'}g}_{L^{6}(\R^3)}^{1/2}.
\end{align*}
For the first term, affine rescaling shows that it can be bounded by
\begin{align*}
D(\frac{\delta}{\nu})(\sum_{J \in P_{\nu}}(\sum_{J' \in P_{\delta}(J)}\nms{\E_{J'}g}_{L^{12}(\R^3)}^{4})^{1/2})^{1/2}.
\end{align*}
Applying H\"older in the sum over $J$, this is bounded by
$$
\nu^{-1/4} D(\frac{\delta}{\nu})(\sum_{J' \in P_{\delta}} \nms{\E_{J'}g}_{L^{12}(\R^3)}^{4})^{1/4}.
$$
This gives the first term of our desired result.
The second term follows from the observation that
\begin{align*}
\int_{\R^3} |\E_{J}g|^6 |\E_{J'}g|^6 \leq (\int_{\R^3} |\E_{J}g|^4 |\E_{J'}g|^8)^{1/2}(\int_{\R^3} |\E_{J}g|^8 |\E_{J'}g|^4)^{1/2},
\end{align*}
and the pointwise estimate
\beq\label{point_wise}
|\E_I g|^p = |\E_{I}g \ast \eta_{\cT_{I}}|^{p} \leq (|\E_{I}g| \ast |\eta_{\cT_{I}}|)^{p} \lesssim_{p} |\E_{I}g|^{p} \ast |\eta_{\cT_{I}}| \lesim_{p, E} |\E_I g|^p* \phi_{\cT_{I, E}}
\endeq
for an interval $I\subset [0, 1]$ and for every $p\ge 1$. Here $\eta_{\cT_{I}}$ is a Schwartz function whose Fourier transform is equal to 1 on a (say) $10|I| \times 10|I|^2 \times 10|I|^3$ parallelpiped containing the Fourier support of $\E_{I}g$ and decays rapidly outside this parallelpiped. Note that this is a rigorous instance of the uncertainty principle.
Combining the above two centered equations it follows that
$$
\max_{\st{J, J' \in P_{\nu}\\d(J, J') \geq 2\nu}}\nms{\E_{J}g\E_{J'}g}_{L^{6}(\R^3)}^{1/2} \lesssim_E  \max_{\st{J, J' \in P_{\nu}\\d(J, J') \geq 2\nu}} \big[ \int_{\R^3}\big( |\E_{J}g|^{4}*\phi_{\cT_J, E} \big) \big( |\E_{J'}g|^{8}* \phi_{\cT_{J'}, E}\big) \big]^{1/12} .
$$
In light of the definition of $\mc{M}_{2, 1, 1}$, this completes the proof of the lemma.
\end{proof}

\begin{lemma}[cf. Lemma 3 of \cite{HB15}]\label{lem3}
If $a$ and $b$ are integers and $\delta$ and $\nu$ were such that
$\nu^{a}\delta^{-1}, \nu^{b}\delta^{-1} \in \N$, then
$$\mc{M}_{2, a, b}(\delta, \nu, E) \lesim_{E} \mc{M}_{2, b, a}(\delta, \nu, E/C_0)^{1/3} \mc{M}_{1, a, b}(\delta, \nu, E/C_0)^{2/3},$$
for some large absolute constant $C_0$.
\end{lemma}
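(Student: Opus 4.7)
The plan is to deduce the inequality by two successive applications of Hölder's inequality with exponents $3/2$ and $3$, with no need to invoke any decoupling-specific input. The key arithmetic observation is that the exponent pair $(4,8)$ for $(|\E_I|, |\E_{I'}|)$ appearing on the left-hand side of the definition of $\mc{M}_{2,a,b}$ is the convex combination $\tfrac{2}{3}(2,10) + \tfrac{1}{3}(8,4)$. The pair $(2,10)$ is precisely the integrand in the definition of $\mc{M}_{1,a,b}$, while the pair $(8,4)$ matches that in $\mc{M}_{2,b,a}$ provided one reverses the roles of $I$ and $I'$ so that the 4th power sits on the length-$\nu^b$ interval and the 8th power on the length-$\nu^a$ interval.

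First I would apply Hölder pointwise inside each convolution. Factoring $|\E_I(y)|^4 \phi_{\cT_I,E}(x-y) = [|\E_I(y)|^2 \phi_{\cT_I,E}(x-y)]^{2/3}[|\E_I(y)|^8 \phi_{\cT_I,E}(x-y)]^{1/3}$ and applying Hölder in $y$ gives
\[
|\E_I|^4 * \phi_{\cT_I,E} \le (|\E_I|^2 * \phi_{\cT_I,E})^{2/3}(|\E_I|^8 * \phi_{\cT_I,E})^{1/3},
\]
and symmetrically
\[
|\E_{I'}|^8 * \phi_{\cT_{I'},E} \le (|\E_{I'}|^{10} * \phi_{\cT_{I'},E})^{2/3}(|\E_{I'}|^4 * \phi_{\cT_{I'},E})^{1/3}.
\]
I would then multiply these two pointwise bounds, regroup the four factors into the pairings $(2,10)$ and $(8,4)$, and apply Hölder in $x$ with exponents $3/2$ and $3$ to arrive at
\[
\int (|\E_I|^4 * \phi_{\cT_I,E})(|\E_{I'}|^8 * \phi_{\cT_{I'},E}) \le \mathrm{I}^{2/3}\cdot \mathrm{II}^{1/3},
\]
where $\mathrm{I} := \int (|\E_I|^2 * \phi_{\cT_I,E})(|\E_{I'}|^{10} * \phi_{\cT_{I'},E})$ and $\mathrm{II} := \int (|\E_I|^8 * \phi_{\cT_I,E})(|\E_{I'}|^4 * \phi_{\cT_{I'},E})$.

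Finally I would bound $\mathrm{I}$ directly by the definition of $\mc{M}_{1,a,b}(\delta,\nu,E)$, and bound $\mathrm{II}$ by the definition of $\mc{M}_{2,b,a}(\delta,\nu,E)$ after relabeling $I \leftrightarrow I'$; the separation hypothesis $d(I,I')\ge\nu$ is symmetric and so is preserved. Setting $\Sigma_I := \sum_{J \in P_\delta(I)} \|\E_J g\|_{L^{12}(\R^3)}^4$, this yields $\mathrm{I} \le \mc{M}_{1,a,b}^{12}\,\Sigma_I^{1/2}\Sigma_{I'}^{5/2}$ and $\mathrm{II} \le \mc{M}_{2,b,a}^{12}\,\Sigma_I^{2}\Sigma_{I'}$; combining and collecting $\Sigma_I^{1/3+2/3}=\Sigma_I$ and $\Sigma_{I'}^{5/3+1/3}=\Sigma_{I'}^2$ gives $\mc{M}_{2,a,b}^{12} \le \mc{M}_{1,a,b}^{8}\mc{M}_{2,b,a}^{4}$, which is the claimed inequality. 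Since every step preserves the parameter $E$, there is no substantive obstacle; the only point to track carefully is the $(a,b)\leftrightarrow(b,a)$ swap on the $\mc{M}_2$ term, and the weakening to $E/C_0$ in the statement appears to be a safety margin that the argument does not actually consume.
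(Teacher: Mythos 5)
Your proof is correct, and it rests on the same key identity as the paper's: writing the exponent pair $(4,8)$ as the convex combination $\tfrac23(2,10)+\tfrac13(8,4)$ and then applying H\"older in $x$ with exponents $3/2$ and $3$ to split into the $\mc{M}_{1,a,b}$ and $\mc{M}_{2,b,a}$ integrals; the relabeling $I\leftrightarrow I'$ for the second factor and the final exponent bookkeeping all check out. The one genuine difference is in how the pointwise factorization is obtained: the paper first invokes a Bernstein-type inequality to pass from $|\E_I g|^4*\phi_{\cT_I,E}$ to a product of convolutions at the lower exponents $4/3$ and $8/3$ (at the cost of degrading $E$ to $E/C_0$) and then uses Jensen to climb back up to exponents $2$ and $8$, whereas you apply H\"older directly in the convolution variable (log-convexity of $p\mapsto\int|f|^p\,d\mu$ with $d\mu=\phi_{\cT_I,E}(x-\cdot)\,dy$) to get $|\E_I g|^4*\phi_{\cT_I,E}\le(|\E_I g|^2*\phi_{\cT_I,E})^{2/3}(|\E_I g|^8*\phi_{\cT_I,E})^{1/3}$ in one step, with no implicit constant and no loss in $E$. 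Your observation that the $E/C_0$ weakening is therefore not consumed is accurate (and harmless, since $\phi_{T,E}\le\phi_{T,E/C_0}$ pointwise means your conclusion implies the stated one); the paper only needs the weaker form anyway because $E$ is degraded elsewhere in the iteration.
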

\begin{proof}
The proof of Lemma \ref{lem3} is essentially via H\"older's and Bernstein's inequalities.

Suppose $I \in P_{\nu^a}([0, 1])$, $I' \in P_{\nu^b}([0, 1])$ with $d(I, I') \ge 2\nu$. We first recall a version of Bernstein's inequality. Following the proof of Bernstein's inequality as in \cite[Corollary 4.3]{BD17} shows that
for every $p \geq 1$,
\begin{align}\label{bernstein}
\Big( \int_{\R^3} |\E_I g(x)|^p \phi_{\cT_I, E}(x)\, dx\Big)^{\frac 1 p} \lesim_{p, E} \int_{\R^3} |\E_I g(x)| \phi_{\cT_I, E/p}(x)\, dx.
\end{align}

Applying \eqref{bernstein} shows that there is an absolute constant $C_0$ such that
\begin{align}\label{berneq1a}
\begin{aligned}
\int_{\R^3}|\E_{I}g|^{4}\phi_{\cT_{I, E}} &= (\int_{\R^3}|\E_{I}g|^{4}\phi_{\cT_{I, E}} )^{1/3} (\int_{\R^3}|\E_{I}g|^{4}\phi_{\cT_{I, E}} )^{2/3} \\
&\lsm_{E} (\int_{\R^3}|\E_{I}g|^{4/3}\phi_{\cT_{I, E/C_0}})^{\frac{3}{4}\cdot 4 \cdot \frac{1}{3}}  (\int_{\R^3}|\E_{I}g|^{8/3}\phi_{\cT_{I, E/C_0}})^{\frac{3}{8} \cdot 4 \cdot \frac{2}{3}}.
\end{aligned}
\end{align}
Since
\begin{align*}
(|\E_{I}g|^{4} \ast \phi_{\cT_{I, E}})(x) = \int_{\R^3}|\ov{(\E_{I}g)(x - y)}|^{4}\phi_{\cT_{I, E}}(y)\, dy = \int_{\R^3}|(\E_{I}h_{x})(y)|^{4}\phi_{\cT_{I, E}}(y)\, dy
\end{align*}
where $h_{x}(\xi) = \ov{g(\xi)}e(-\xi x_1 - \xi^{2}x_2 - \xi^{3}x_3)$, from \eqref{berneq1a}
it follows that
$$
|\E_{I}g|^{4}*\phi_{\cT_{I, E}} \lesssim_E \big( |\E_{I}g|^{\frac 4 3}*\phi_{\cT_I, E/C_0} \big) \big( |\E_{I}g|^{\frac 8 3}*\phi_{\cT_I, E/C_0} \big)
$$
where we used $1 = \frac{3}{4}\cdot 4 \cdot \frac{1}{3} = \frac{3}{8} \cdot 4 \cdot \frac{2}{3}$.
Similarly,
$$
|\E_{I'}g|^{8}* \phi_{\cT_{I'}, E} \lesssim_E \big( |\E_{I'}g|^{\frac{20}{3}}* \phi_{\cT_{I'}, E/C_0}\big) \big( |\E_{I'}g|^{\frac{4}{3}}* \phi_{\cT_{I'}, E/C_0} \big)
$$
where we used $1 = \frac{3}{20}\cdot 8 \cdot \frac{5}{6} = \frac{3}{4} \cdot 8 \cdot \frac{1}{6}$.
This shows
\begin{align}\label{190514e2.2}
\begin{aligned}
& \int_{\R^3}\big( |\E_{I}g|^{4}*\phi_{\cT_I, E} \big) \big( |\E_{I'}g|^{8}* \phi_{\cT_{I'}, E}\big) \\
& \lesim_{E} \int_{\R^3}\big( |\E_{I}g|^{\frac 4 3}*\phi_{\cT_I, E/C_0} \big) \big( |\E_{I}g|^{\frac 8 3}*\phi_{\cT_I, E/C_0} \big)\times\\
& \hspace{1.5in}\big( |\E_{I'}g|^{\frac{20}{3}}* \phi_{\cT_{I'}, E/C_0}\big) \big( |\E_{I'}g|^{\frac{4}{3}}* \phi_{\cT_{I'}, E/C_0}\big).
\end{aligned}
\end{align}

By convexity and H\"older, the last display can be bounded by
\begin{align}\label{190514e2.3}
\begin{aligned}
\Big( \int_{\R^3} \big( |\E_{I}g|^{2}*\phi_{\cT_I, E/C_0} \big) \big( &|\E_{I'}g|^{10}* \phi_{\cT_{I'}, E/C_0}\big) \Big)^{\frac  2 3}\times\\
& \Big( \int_{\R^3}\big( |\E_{I}g|^{8}*\phi_{\cT_I, E/C_0} \big) \big( |\E_{I'}g|^{4}* \phi_{\cT_{I'}, E/C_0}\big)\Big)^{\frac 1 3}
\end{aligned}
\end{align}
Recalling the definitions of $\mc{M}_{2, a, b}$ and $\mc{M}_{1, a, b}$, this finishes the proof of the lemma.
\end{proof}

\begin{lemma}[cf. Lemma 4 of \cite{HB15}]\label{lem4}
If $a$ and $b$ are integers and $\delta$ and $\nu$ were such that $\nu^{a}\delta^{-1}, \nu^{b}\delta^{-1} \in \N$, then
$$\mc{M}_{1, a, b}(\delta, \nu, E) \lsm_{E} \mc{M}_{2, b, a}(\delta, \nu, E/C_0)^{1/4}D(\frac{\delta}{\nu^b})^{3/4},$$
for some large absolute constant $C_0$.
\end{lemma}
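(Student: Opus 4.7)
The plan is to reduce the integral defining $\mc{M}_{1, a, b}$ to the integral defining $\mc{M}_{2, b, a}$ plus a factor that can be absorbed by parabolic rescaling. Fix $I \in P_{\nu^a}([0,1])$ and $I' \in P_{\nu^b}([0,1])$ with $d(I, I') \ge \nu$, and set
\[
\mathcal{I} := \int_{\R^3}\bigl(|\E_I g|^2 * \phi_{\cT_I, E}\bigr)\bigl(|\E_{I'} g|^{10} * \phi_{\cT_{I'}, E}\bigr).
\]
The driving algebraic identity is
\[
|\E_I g|^2\,|\E_{I'} g|^{10} \;=\; \bigl(|\E_I g|^8\,|\E_{I'} g|^4\bigr)^{1/4}\,\bigl(|\E_{I'} g|^{12}\bigr)^{3/4},
\]
which is designed so that, when the roles of $I$ and $I'$ are swapped (so $I' \in P_{\nu^b}$ becomes the ``first'' interval and $I \in P_{\nu^a}$ the ``second''), the first factor fits the definition of $\mc{M}_{2, b, a}$ exactly, while the second factor is an $L^{12}$-quantity that can be decoupled from scale $|I'|=\nu^b$ down to $\delta$ via Lemma \ref{parab}.

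First I would convert the identity into pointwise estimates on the convolutions. Applying H\"older in $y$ against the measure $\phi_{\cT_\bullet, E}(x-y)\,dy$ with exponents $(4,4/3)$ (and using $\int \phi_{\cT, E} = O_E(1)$) produces
\[
(|\E_I g|^2 * \phi_{\cT_I, E})(x) \lesssim_E (|\E_I g|^8 * \phi_{\cT_I, E})(x)^{1/4},
\]
\[
(|\E_{I'} g|^{10} * \phi_{\cT_{I'}, E})(x) \le (|\E_{I'} g|^4 * \phi_{\cT_{I'}, E})(x)^{1/4}\,(|\E_{I'} g|^{12} * \phi_{\cT_{I'}, E})(x)^{3/4}.
\]
Multiplying these two bounds and then applying H\"older in $x$ with the same exponents $(4,4/3)$ separates $\mathcal{I}$ as
\[
\mathcal{I} \lesssim_E \Bigl[\int (|\E_I g|^8 * \phi_{\cT_I, E})(|\E_{I'} g|^4 * \phi_{\cT_{I'}, E})\Bigr]^{1/4}\, \Bigl[\int |\E_{I'} g|^{12} * \phi_{\cT_{I'}, E}\Bigr]^{3/4}.
\]

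Next I would bound the first bracket using the definition of $\mc{M}_{2, b, a}(\delta, \nu, E)$ applied to the pair $(I', I)$ (admissible, since $I'\in P_{\nu^b}$, $I\in P_{\nu^a}$, $d(I',I)\ge\nu$), yielding
\[
\mc{M}_{2, b, a}(\delta,\nu,E)^{12}\Bigl(\sum_{J' \in P_\delta(I')}\|\E_{J'} g\|_{L^{12}}^4\Bigr)\Bigl(\sum_{J \in P_\delta(I)} \|\E_J g\|_{L^{12}}^4\Bigr)^{2}.
\]
For the second bracket I would use $\int |\E_{I'} g|^{12}* \phi_{\cT_{I'}, E} \lesssim_E \|\E_{I'} g\|_{L^{12}}^{12}$ (from $\int \phi_{\cT_{I'}, E} = O_E(1)$) together with Lemma \ref{parab} at scale $\sigma = \nu^b$ to get $\|\E_{I'} g\|_{L^{12}}^{12} \le D(\delta/\nu^b)^{12}\bigl(\sum_{J' \in P_\delta(I')}\|\E_{J'} g\|_{L^{12}}^4\bigr)^3$.

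Finally I would collect exponents: the total power of $\sum_{J'} \|\E_{J'} g\|_{L^{12}}^4$ is $\tfrac14 + \tfrac34 \cdot 3 = \tfrac52$, that of $\sum_J \|\E_J g\|_{L^{12}}^4$ is $2 \cdot \tfrac14 = \tfrac12$, and the scalar prefactor is $\mc{M}_{2,b,a}(\delta,\nu,E)^3\,D(\delta/\nu^b)^9$. Matching against the definition of $\mc{M}_{1, a, b}$ and taking $12$th roots gives
\[
\mc{M}_{1,a,b}(\delta,\nu,E) \lesssim_E \mc{M}_{2,b,a}(\delta,\nu,E)^{1/4}\,D(\delta/\nu^b)^{3/4},
\]
which implies the claim since $\mc{M}_{2,b,a}(\delta,\nu,E) \le \mc{M}_{2,b,a}(\delta,\nu,E/C_0)$ (smaller decay parameter $E$ means a slower-decaying weight, which only inflates the best constant). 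The only real subtlety is the bookkeeping of exponents and the role swap of $I \leftrightarrow I'$ when invoking $\mc{M}_{2, b, a}$; in contrast to Lemma \ref{lem3}, no Bernstein step is needed here, so $E$ is essentially preserved and the $/C_0$ in the stated bound is only for safety.
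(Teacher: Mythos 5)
Your proof is correct and follows essentially the same route as the paper: H\"older/Jensen to dominate the integrand by $\bigl[(|\E_I g|^8*\phi)(|\E_{I'}g|^4*\phi)\bigr]^{1/4}\bigl[|\E_{I'}g|^{12}*\phi\bigr]^{3/4}$, then the definition of $\mc{M}_{2,b,a}$ with the roles of $I$ and $I'$ swapped for the first factor and parabolic rescaling for the second. The only (harmless) difference is that the paper first splits $|\E_{I'}g|^{10}*\phi_{\cT_{I'},E}$ into $(|\E_{I'}g|*\phi)(|\E_{I'}g|^{9}*\phi)$ via a Bernstein step (costing $E\to E/C_0$), whereas your direct H\"older inside the single convolution reaches the same decomposition slightly more cleanly.
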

\begin{proof}
Suppose $I \in P_{\nu^a}([0, 1])$, $I' \in P_{\nu^b}([0, 1])$ with $d(I, I') \ge 2\nu$. We start with an estimate that is similar to \eqref{190514e2.2} and \eqref{190514e2.3}:
\begin{align*}
\begin{split}
& \int_{\R^3}\big( |\E_{I}g|^{2}*\phi_{\cT_I, E} \big) \big( |\E_{I'}g|^{10}* \phi_{\cT_{I'}, E}\big) \\
& \lesim_{E} \int_{\R^3}\big( |\E_{I}g|^{2}*\phi_{\cT_I, E} \big) \big( |\E_{I'}g|* \phi_{\cT_{I'}, E/C_0}\big) \big( |\E_{I'}g|^{9}* \phi_{\cT_{I'}, E/C_0}\big)\\
& \lesim_{E} \Big( \int_{\R^3}\big( |\E_{I}g|^{8}*\phi_{\cT_I, E} \big) \big( |\E_{I'}g|^{4}* \phi_{\cT_{I'}, E/C_0}\big)\Big)^{\frac 1 4} \Big( \int_{\R^3} |\E_{I'}g|^{12}* \phi_{\cT_{I'}, E/C_0}\Big)^{\frac 3 4}.
\end{split}
\end{align*}
Since $\phi_{\cT_{I'}, E/C_0}$ is $L^{1}$-normalized,
\begin{align*}
\int_{\R^3} \big( |\E_{I'}g|^{12}* \phi_{\cT_{I'}, E/C_0}\big) \lesim \int_{\R^3}  |\E_{I'}g|^{12}.
\end{align*}
We finish the proof by applying affine rescaling.
\end{proof}

The proofs of the following two lemmas will be given in Sections \ref{section:tube_slab_proof} and \ref{sect:M2ab}.
\begin{lemma}[cf. Lemma 5 of \cite{HB15}]\label{bilinear1}
Let $a$ and $b$ be integers such that $1 \leq a \leq 3b$. Suppose $\delta$ and $\nu$ were such that $\nu^{3b}\delta^{-1} \in \N$.
Then
\begin{equation} \label{Lemma6'}
\mc{M}_{1,a,b}(\delta, \nu, E) \lesssim_{a, b, E} \nu^{-\frac{1}{24}(3b - a) - C_0} \mc{M}_{1, 3b, b}(\delta, \nu, E/C_0)
\end{equation}
for some absolute constant $C_0$.
\end{lemma}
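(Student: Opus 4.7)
The plan is to iterate a one-scale refinement
$$\mc{M}_{1, a, b}(\delta, \nu, E) \lesssim_{E} \nu^{-1/24} \mc{M}_{1, a+1, b}(\delta, \nu, E/C_0),$$
valid for each integer $a$ with $1 \le a < 3b$, a total of $3b - a$ times. The accumulated constants $C_0^{3b-a}$ (appearing in the weight and in the multiplicative factors) are finitely many and may be absorbed into $\nu^{-C_0}$ after relabeling $E \mapsto E \cdot C_0^{3b-a-1}$, giving the claimed bound.

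The one-scale refinement will follow from a pointwise ``tube-slab" inequality, to be proven in Section~\ref{section:tube_slab_proof},
$$|\E_I g|^2 \ast \phi_{\cT_I, E}(x) \lesssim_{E} \sum_{J \in P_{\nu^{a+1}}(I)} |\E_J g|^2 \ast \phi_{\cT_J, E/C_0}(x), \qquad \forall x \in \R^3,$$
valid for $I \in P_{\nu^a}([0, 1])$. Morally, this encodes a \emph{local $L^2$ orthogonality} on the scale $\cT_I$: the $\nu^{-1}$ sub-intervals $J$ contribute as if $\{\E_J g\}$ were mutually orthogonal, with no Cauchy--Schwarz loss in passing from $(\sum_J \E_J g)^2$ to $\sum_J |\E_J g|^2$.

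Given the pointwise bound, the one-scale refinement is a bookkeeping exercise. Fix $I \in P_{\nu^a}([0, 1])$ and $I' \in P_{\nu^b}([0, 1])$ with $d(I, I') \ge \nu$; substitute the pointwise inequality into the LHS of the definition of $\mc{M}_{1, a, b}(\delta, \nu, E)$, and apply the definition of $\mc{M}_{1, a+1, b}(\delta, \nu, E/C_0)$ to each of the $\nu^{-1}$ summands (the transversality $d(J, I') \ge d(I, I') \ge \nu$ is preserved since $J \subset I$). Summing over $J$ via Cauchy--Schwarz
$$\sum_{J \in P_{\nu^{a+1}}(I)} \Big(\sum_{J'' \in P_\delta(J)} \nms{\E_{J''} g}_{L^{12}(\R^3)}^4\Big)^{1/2} \le \nu^{-1/2} \Big(\sum_{J'' \in P_\delta(I)} \nms{\E_{J''} g}_{L^{12}(\R^3)}^4\Big)^{1/2}$$
produces the factor $\nu^{-1/2}$ on $\mc{M}_{1, a, b}^{12}$, which becomes $\nu^{-1/24}$ on $\mc{M}_{1, a, b}$ after taking 12th roots.

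The hard part is the pointwise tube-slab inequality. A naive triangle-inequality bound on $|\E_I g|^2 = \sum_{J, J'} \E_J g \overline{\E_{J'} g}$ costs an extra $\nu^{-1}$ factor and would yield only $\mc{M}_{1, a, b} \lesssim \nu^{-1/8} \mc{M}_{1, a+1, b}$ per step, which falls short of the target. Avoiding this loss requires an honest orthogonality argument exploiting the curvature of $\gamma$: while the Taylor expansion
$$\gamma(\xi) - \gamma(\eta) = (\xi - \eta)\gamma'(c_I) + \tfrac{1}{2}(\xi - \eta)(\xi + \eta - 2c_I)\gamma''(c_I) + \tfrac{1}{6}\big((\xi - c_I)^3 - (\eta - c_I)^3\big)\gamma'''(c_I)$$
places the cross-term Fourier supports $\gamma(J) - \gamma(J')$ (for $J \ne J'$) entirely inside $\cT_I^\circ$, they are confined to thin tubes on a two-dimensional quadric surface. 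The plan is to exploit this tube structure, together with the uncertainty principle at the finer scale $\cT_J$, to absorb the cross terms into the diagonal; this is the main technical step I expect to implement in Section~\ref{section:tube_slab_proof}.
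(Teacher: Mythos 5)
Your reduction of the lemma to the claimed pointwise ``tube-slab'' inequality is where the argument breaks. The pointwise bound $|\E_I g|^2 * \phi_{\cT_I,E}(x) \lesssim_E \sum_{J\in P_{\nu^{a+1}}(I)} |\E_J g|^2 * \phi_{\cT_J, E/C_0}(x)$ is false: take $g = \mathbbm{1}_{I}$ and $x=0$. Then $|\E_I g|\sim \nu^a$ throughout $\cT_I$, so the left side is $\sim \nu^{2a}$, while each of the $\nu^{-1}$ summands on the right is $\sim |J|^2 = \nu^{2a+2}$, so the right side is $\sim \nu^{2a+1}$; the inequality fails by exactly the factor $\nu^{-1}$ you set out to avoid. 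This is not an artifact of the example: at a point where all the $\E_J g$ interfere constructively no orthogonality is available, and averaging over $\cT_I$ does not help because $\cT_I$ has extent only $\nu^{-a}$ in the $\gamma'(c_I)$ direction, whereas resolving the $\sim\nu^{a+1}$ frequency separation between distinct arcs $\gamma(J)$ requires spatial extent $\gtrsim \nu^{-(a+1)}$ in that direction. The paper accordingly never proves a pointwise statement; it integrates over spatial cubes $\Box$ of side $\nu^{-b}$ (resp.\ $\nu^{-2b}$, $\nu^{-3b}$), pulls out the factor $|\E_{I'}g|^{10}*\phi_{\cT_{I'},E}$ using its local constancy at scale $\nu^{-b}$, and only then applies $L^2$ orthogonality to $\int_{\Box_y}|\E_I g|^2$.

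Even granting some integrated one-scale refinement, the step-by-step structure cannot deliver the stated constant. Once $a\ge b$, decoupling $I$ into intervals of length $\nu^{a+1}$ forces you onto spatial regions of diameter $\ge \nu^{-(a+1)} > \nu^{-b}$, on which $|\E_{I'}g|^{10}*\phi_{\cT_{I'},E}$ is no longer locally constant; separating the two factors then requires the transversality (``ball inflation'') estimate $|T\cap T'|/|\Box| \lesssim \nu^{-2}\,(|T|/|\Box|)(|T'|/|\Box|)$, which costs a fixed power $\nu^{-O(1)}$ per application. Incurring this once per unit step accumulates a loss $\nu^{-O(3b-a)}$, incompatible with the exponent $-\tfrac{1}{24}(3b-a)-C_0$ with $C_0$ absolute; likewise the weight parameter degrades to $E/C_0^{3b-a}$ rather than $E/C_0$, and your relabeling of $E$ does not repair this because $b$ is unbounded in the iteration of Section \ref{last_section}. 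The paper's Lemmas \ref{lem:b}--\ref{lem:3b} are structured precisely to incur the $\nu^{-O(1)}$ transversality loss only twice, jumping directly $a\to b\to 2b\to 3b$, so that the only loss proportional to $3b-a$ is the Cauchy--Schwarz loss $\nu^{-(3b-a)/24}$ (which you did compute correctly).
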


\begin{lemma}[cf. Lemma 6 of \cite{HB15}]\label{cor_small_ball_bi}
Let $a$ and $b$ be integers such that $1 \leq a \leq b$. Suppose $\delta$ and $\nu$ were such that $\nu^{2b - a}\delta^{-1} \in \N$
and $\nu \in 2^{-2^{\mathbb{N}}} \cap (0,1/1000)$. Then for every $\vep > 0$,
\begin{align*}
\mc{M}_{2, a, b}(\delta, \nu, E) \lsm_{\vep, E} \nu^{-\frac{1}{6}(1+\vep)(b - a) - C_0}\mc{M}_{2, 2b - a, b}(\delta, \nu, E/C_0),
\end{align*}
for some absolute constant $C_0$.
\end{lemma}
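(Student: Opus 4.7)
The plan is a ``conditioning'' step analogous to the corresponding step in the efficient congruencing method: we refine the coarser interval $I$ of length $\nu^a$ into subintervals $L$ of length $\nu^{2b-a}$ and decouple $\E_I g = \sum_{L \in P_{\nu^{2b-a}}(I)} \E_L g$. There are $\nu^{-2(b-a)}$ such subintervals $L$, so trivial Cauchy--Schwarz inside $|\E_I g|^4$ would give an unacceptable loss of $\nu^{-6(b-a)}$. We want instead a loss of $\nu^{-2(1+\vep)(b-a)}$; since $\mc{M}_{2,a,b}^{12}$ sits in front of the integral in its defining inequality, the $12$th root then yields the claimed factor $\nu^{-\frac{1}{6}(1+\vep)(b-a)}$.

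To obtain such a bound, I would fix $I \in P_{\nu^a}([0,1])$ and $I' \in P_{\nu^b}([0,1])$ with $d(I, I') \ge \nu$ and tile $\R^3$ by translates $\{T\}$ of $\cT_{I'}$. On each $T$, the weight $|\E_{I'} g|^8 * \phi_{\cT_{I'}, E}$ is essentially constant, so matters reduce to bounding $\int_T (|\E_I g|^4 * \phi_{\cT_I, E}) w_{T, E}$ for each such $T$. The key geometric input is that $T = \cT_{I'}$ is a plate of dimensions $\nu^{-b}\times \nu^{-2b}\times \nu^{-3b}$ whose long direction is orthogonal to the osculating plane $\mathrm{span}\{\gamma'(c_{I'}),\gamma''(c_{I'})\}$, and $d(I,I')\ge\nu$ ensures that this long direction is transverse to the moment curve over $I$. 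Consequently, after projecting $\gamma|_I$ along this long direction onto the osculating plane at $c_{I'}$ (and affinely rescaling $I$ to $[0,1]$), one obtains an arc of a parabola, and the shadow of $T$ is a $2$D rectangle of dimensions $\nu^{-b}\times\nu^{-2b}$, considerably smaller than the canonical parabola decoupling ball of radius $\delta_{\mathrm{par}}^{-2}=\nu^{-4(b-a)}$ at scale $\delta_{\mathrm{par}}=\nu^{2(b-a)}$. We are therefore precisely in the small-ball regime for the $\ell^4 L^4$ decoupling for the parabola announced as the lower-dimensional input in the introduction.

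Invoking the sharp small-ball $\ell^4 L^4$ parabola decoupling at this scale gives the required bound, with a genuine $\nu^{-2(b-a)}$ loss (up to an $\vep$-loss and harmless constants) that reflects exactly how much the shadow of $T$ falls short of the canonical decoupling ball. After reassembling over the translates $T$ via \eqref{eq:wt_ineq} and re-incorporating the $|\E_{I'} g|^8 * \phi_{\cT_{I'}, E/C_0}$ weight, the resulting right-hand side is (up to harmless constants) $\mc{M}_{2, 2b-a, b}(\delta,\nu, E/C_0)^{12}$ summed over $L \in P_{\nu^{2b-a}}(I)$ and paired with $I'$, which after an application of the triangle inequality in $L$ yields the claim.

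The principal obstacle will be the clean statement and proof of the requisite small-ball $\ell^4 L^4$ parabola decoupling adapted to the plate $T \subset \R^3$: one must use Fubini in the long direction of $T$ to reduce the 3D integral on $T$ to an essentially 2D integral on its shadow, and then apply a sharp small-ball parabola decoupling in $\R^2$ that delivers exactly the loss $\nu^{-2(1+\vep)(b-a)}$, not merely $\nu^{-\vep}$. Secondary difficulties include tracking the weight decay in $\phi_{\cT_I, E}$ and $\phi_{\cT_{I'}, E}$ cleanly through the various rescalings, and accommodating the integrality and dyadic-tower hypotheses $\nu^{2b-a}\delta^{-1}\in\N$ and $\nu \in 2^{-2^{\N}}$ (the latter likely needed for a self-contained bootstrap proof of the small-ball parabola decoupling).
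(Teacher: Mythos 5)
Your overall architecture matches the paper's: localize to translates $T$ of $\cT_{I'}$, use that $|\E_{I'}g|^{8}*\phi_{\cT_{I'},E}$ is essentially constant on each $T$, reduce the remaining factor to a two--dimensional problem, and apply a sharp small-ball $\ell^4L^4$ parabola decoupling with loss $\nu^{-2(1+\vep)(b-a)}$, whose twelfth root gives $\nu^{-\frac16(1+\vep)(b-a)}$. The gap is in your choice of Fubini direction. You project along the \emph{long} axis of $T$ (dual to $\gamma'''(c_{I'})$), so the two--dimensional region is the $\nu^{-b}\times\nu^{-2b}$ shadow in the $(x_1,x_2)$-plane and the projected curve is the exact parabola $t\mapsto(t,t^2)$. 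But the target frequency intervals $J\in P_{\nu^{2b-a}}(I)$ have uncertainty boxes whose tangent side has length $\nu^{-(2b-a)}$, and $\nu^{-(2b-a)}\geq\nu^{-b}\cdot\nu^{-(b-a)}$: the shadow is thinner than the uncertainty scale of the decoupled pieces in the $x_1$-direction whenever $a<b$. Concretely, after parabolic rescaling of $I$ to $[0,1]$ the shadow becomes a sheared strip whose slices have width $\nu^{-(b-a)}<\delta_{\mathrm{par}}^{-1}=\nu^{-2(b-a)}$, so Lemma \ref{190209lemma9} can only be applied after enlarging to full squares of side $\delta_{\mathrm{par}}^{-1}$, and its right-hand side weight necessarily lives on those squares. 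Back in the original variables the decoupled terms are therefore weighted by a box of $x_1$-extent $\nu^{-(2b-a)}$ rather than $\nu^{-b}$. When you then sum over the tiling of $\R^3$ by translates of $\cT_{I'}$, each point is covered by $\sim\nu^{-(b-a)}$ of these fattened boxes, and (e.g.\ when $|\E_Jg|^4*\phi_{\cT_J}$ and $|\E_{I'}g|^8*\phi_{\cT_{I'}}$ are roughly constant) this overcounting is a genuine extra factor of $\nu^{-(b-a)}$. That upgrades the exponent from $\tfrac16(b-a)$ to $\tfrac14(b-a)$ in $\mc{M}_{2,a,b}$; feeding this into Lemma \ref{combine} replaces $(A,B)=(-\tfrac5{36},\tfrac5{72}-\tfrac\ld2)$ by values with $A+2B=\tfrac1{18}-\ld$, so the iteration in Section \ref{last_section} only yields $\ld\leq\tfrac1{18}$ rather than $\ld=0$.

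The paper avoids this by applying Fubini in the \emph{short} (tangent) direction of $T$, i.e.\ fixing $x_1$ after normalizing $I'=[0,\nu^b]$, and decoupling in the $(x_2,x_3)$-plane, where the cross-section of $T$ is tiled by squares of side $\nu^{-2b}$ — large enough that the slices of the uncertainty boxes of the $J$'s fit inside them, precisely because of the separation $d(I,I')\geq\nu$. The price is that the projected curve is $t\mapsto(t^2,t^3)$ on $I=[d,d+\nu^a]$, which after a change of variables with Jacobian $\sim\nu^{3a}d^2$ becomes a \emph{perturbed} parabola $(t-\tfrac{\nu^{2a}}{3d^2}t^3,\,t^2+\tfrac{2\nu^a}{3d}t^3)$; this is where the hypothesis $d(I,I')\geq\nu$ is actually used (keeping the Jacobian and the perturbation under control, i.e.\ staying away from the cusp of $(t^2,t^3)$ at $t=0$), together with Remark \ref{small_ball_decoupling_4_general} extending the small-ball decoupling to such perturbations. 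Your attribution of the role of transversality to the projection along the binormal direction is therefore misplaced: that projection is non-degenerate for any $I$, but it produces a region too small to be compatible with the target frequency scale.
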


\section{The first bilinear constant $\mc{M}_{1, a,b}$}\label{section:tube_slab_proof}
We break the proof of \eqref{Lemma6'} into the following three different lemmas.

\begin{lemma}[$\ell^2 L^2$ decoupling] \label{lem:b}
	If $1 \leq a \leq b$, then for any pair of frequency intervals $I, I' \subset [0,1]$ with $|I| = \nu^a$, $|I'| = \nu^b$, $d(I,I') \geq 2\nu$, we have

\beq\label{190505e3.1}
\begin{split}
& \int_{\R^3} \big( |\E_{I}g|^{2}*\phi_{\cT_I, E} \big) \big( |\E_{I'}g|^{10}* \phi_{\cT_{I'}, E}\big)\\
& \lesim_E \sum_{J\in P_{\nu^b}(I)} \int_{\R^3} \big( |\E_{J}g|^{2}*\phi_{\cT_J, E/C_0} \big) \big( |\E_{I'}g|^{10}* \phi_{\cT_{I'}, E/C_0}\big)
\end{split}
\endeq
for large enough $E$ and for some absolute constant $C_0$.
\end{lemma}

\begin{lemma}[Ball inflation] \label{lem:2a}
	If $b \leq a \leq 2b$, then for any pair of frequency intervals $I, I' \subset [0,1]$ with $|I| = \nu^a$, $|I'| = \nu^b$, $d(I,I') \geq 2\nu$, we have

\beq\label{190505e3.2}
\begin{split}
& \int_{\R^3} \big( |\E_{I}g|^{2}*\phi_{\cT_I, E} \big) \big( |\E_{I'}g|^{10}* \phi_{\cT_{I'}, E}\big)\\
& \lesim_E \nu^{-C_0} \sum_{J\in P_{\nu^{2b}}(I)} \int_{\R^3} \big( |\E_{J}g|^{2}*\phi_{\cT_J, E/C_0} \big) \big( |\E_{I'}g|^{10}* \phi_{\cT_{I'}, E/C_0}\big)
\end{split}
\endeq
for large enough $E$ and for some absolute constant $C_0$.
\end{lemma}

\begin{lemma}[Ball inflation] \label{lem:3b}
	If $2b \leq a \leq 3b$, then for any pair of frequency intervals $I, I' \subset [0,1]$ with $|I| = \nu^a$, $|I'| = \nu^b$, $d(I,I') \geq 2\nu$, we have
	\beq\label{190505e3.3}
\begin{split}
& \int_{\R^3} \big( |\E_{I}g|^{2}*\phi_{\cT_I, E} \big) \big( |\E_{I'}g|^{10}* \phi_{\cT_{I'}, E}\big)\\
& \lesim_E \nu^{-C_0} \sum_{J\in P_{\nu^{3b}}(I)} \int_{\R^3} \big( |\E_{J}g|^{2}*\phi_{\cT_J, E/C_0} \big) \big( |\E_{I'}g|^{10}* \phi_{\cT_{I'}, E/C_0}\big)
\end{split}
\endeq
for large enough $E$ and for some absolute constant $C_0$.
\end{lemma}

Combining the three lemmas, we see that if $1 \leq a \leq 3b$ and $\nu^{3b}\delta^{-1} \in \N$, then for any pair of frequency intervals $I, I' \subset [0,1]$ with $|I| = \nu^a$, $|I'| = \nu^b$, $d(I,I') \geq 2\nu$,
we have
\begin{align*}
\begin{split}
 \int_{\R^3} &\big( |\E_{I}g|^{2}*\phi_{\cT_I, E} \big) \big( |\E_{I'}g|^{10}* \phi_{\cT_{I'}, E}\big)\\
& \lesim_E \nu^{-C_0} \sum_{J\in P_{\nu^{3b}}(I)} \int_{\R^3} \big( |\E_{J}g|^{2}*\phi_{\cT_J, E/C_0} \big) \big( |\E_{I'}g|^{10}* \phi_{\cT_{I'}, E/C_0}\big)
\end{split}
\end{align*}
which is further bounded by
\[
\begin{split}
\lesssim_E & \nu^{-C_0} \mc{M}_{1,3b,b}(\delta,\nu, E/C_0)^{12} \\
&\quad\quad \sum_{J \in P_{\nu^{3b}}(I)} \bigg( \sum_{J'' \in P_{\delta}(J)} \|\E_{J''} g\|_{L^{12}}^4 \bigg)^{1/2} \bigg( \sum_{J' \in P_{\delta}(I')} \|\E_{J'} g\|_{L^{12}}^4 \bigg)^{5/2} \\
\lesssim_E & \nu^{-C_0-(3b-a)(1-\frac{2}{4})} \mc{M}_{1,3b,b}(\delta,\nu, E/C_0)^{12} \\
&\quad\quad \bigg( \sum_{J \in P_{\delta}(I)} \|\E_J g\|_{L^{12}}^4 \bigg)^{1/2} \bigg( \sum_{J' \in P_{\delta}(I')} \|\E_{J'} g\|_{L^{12}}^4 \bigg)^{5/2}.
\end{split}
\]
It is clear that \eqref{Lemma6'} now follows from the definition of $\mc{M}_{1,a,b}(\delta,\nu, \cdot )$.\\

First we prove a small technical lemma that will be used in the proof of Lemma \ref{lem:b}.
\begin{lemma}\label{eq39}
For $J \subset I \subset [0, 1]$,
$$|\E_{J}g|^{2} \ast \phi_{\cT_I, E} \lsm_{E} |\E_{J}g|^{2} \ast \phi_{\cT_J, E/C_0}$$
for some sufficiently large $C_0$.
\end{lemma}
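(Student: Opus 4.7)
The plan is to exploit the fact that $|\E_J g|^2$ has Fourier support in a set of size dual to $\cT_J$ (the locally constant property), combined with the geometric nesting $\cT_I \subset C \cdot \cT_J$ that follows from $J \subset I$.

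First I would verify the containment $\cT_I \subset C \cdot \cT_J$ for some absolute constant $C$. Since the orientations of $\cT_I$ and $\cT_J$ are determined by $\gamma', \gamma'', \gamma'''$ at $c_I$ and $c_J$ respectively, and $|c_J - c_I| \leq |I|/2$, Taylor expansion
$$\gamma^{(k)}(c_J) = \sum_{j=0}^{3-k} \frac{(c_J-c_I)^j}{j!} \gamma^{(k+j)}(c_I), \qquad k=1,2,3,$$
applied to any $x \in \cT_I$ yields $|x \cdot \gamma^{(k)}(c_J)| \lesssim |I|^{-k} \leq |J|^{-k}$, which gives the desired inclusion.

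Next I would invoke the locally constant property. Because $\widehat{|\E_J g|^2}$ is supported in $\gamma(J)-\gamma(J)$, which lies inside a constant dilate of the dual parallelepiped of $\cT_J$, one may choose a Schwartz function $K$ with $\widehat{K} \equiv 1$ on this dual set and $|K(x)| \lesssim_{E'} \phi_{\cT_J, E'}(x)$ for any $E'$. Then $|\E_J g|^2 = |\E_J g|^2 * K$, so by nonnegativity of $\phi_{\cT_I, E}$ and the triangle inequality,
$$|\E_J g|^2 * \phi_{\cT_I, E}(x) \leq |\E_J g|^2 * \bigl( |K| * \phi_{\cT_I, E} \bigr)(x) \lesssim_{E} |\E_J g|^2 * \bigl(\phi_{\cT_J, E} * \phi_{\cT_I, E}\bigr)(x),$$
after choosing $E' = E$.

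The remaining step, and the only slightly technical one, is the kernel bound
$$\phi_{\cT_J, E} * \phi_{\cT_I, E}(x) \lesssim_{E} \phi_{\cT_J, E/C_0}(x),$$
which I would obtain from the nesting $\cT_I \subset C \cdot \cT_J$. Both kernels are $L^1$-normalized, so the convolution has total mass $O(1)$; since $\cT_I$ sits inside a bounded dilate of $\cT_J$, the convolution is still essentially concentrated on $\cT_J$, with decay $(1 + |A_J^{-1}(x-c_J)|)^{-E/C_0}$ obtained by splitting $\phi_{\cT_J, E}(x-y)$ into regions $|A_J^{-1}y| \lesssim 1$ and $|A_J^{-1}y| \gg 1$ and using that $\phi_{\cT_I, E}(y)$ already has fast decay in $A_J^{-1}y$. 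Combining with the previous display completes the proof. I expect this final kernel estimate to be the main (though routine) obstacle, as it is where the loss from $E$ to $E/C_0$ is incurred.
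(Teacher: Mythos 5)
Your proposal is correct, and it rests on the same two ingredients as the paper's proof---the geometric nesting $\cT_I\subset C\,\cT_J$ (which the paper also verifies, via the observation that the angle between the two parallelepipeds is $O(|I|)$) and the locally constant property of $|\E_Jg|^2$ at the scale of $\cT_J$---but it packages them differently. The paper first reduces the pointwise convolution inequality to the weighted $L^2$ estimate $\nms{\E_Jg}_{L^2(\phi_{\cT_I,E})}^2\lsm\nms{\E_Jg}_{L^2(\phi_{\cT_J,E/C_0})}^2$, decomposes $\phi_{\cT_I,E}$ into dyadic pieces $2^{-k(E-3)}|2^k\cT_I|^{-1}\mathbbm{1}_{2^k\cT_I}$, and applies a local $L^\infty$--$L^2$ Bernstein inequality on each dilate $C2^k\cT_J$, the loss in $E$ coming from the dyadic summation. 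You instead isolate everything that does not involve $g$ into the kernel estimate $\phi_{\cT_J,E}*\phi_{\cT_I,E}\lsm_E\phi_{\cT_J,E/C_0}$, which is a cleaner and more modular organization. Two minor remarks. First, rather than discussing the Fourier support of the non-integrable function $|\E_Jg|^2$, you can reach the same display by the route the paper uses for \eqref{point_wise}: write $\E_Jg=\E_Jg*K_0$ with $\widehat{K_0}\equiv1$ near $\gamma(J)$ and apply Cauchy--Schwarz to get $|\E_Jg|^2\lsm_{E}|\E_Jg|^2*\phi_{\cT_J,E}$ pointwise, then convolve with $\phi_{\cT_I,E}$. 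Second, in the kernel estimate the split should be taken at $|A_J^{-1}y|\le\tfrac12|A_J^{-1}x|$ rather than at $|A_J^{-1}y|\lsm1$: on the first region one has $\phi_{\cT_J,E}(x-y)\lsm\phi_{\cT_J,E}(x)$ up to constants while $\phi_{\cT_I,E}$ integrates to $O(1)$; on the complementary region the nesting gives $|A_I^{-1}y|\gtrsim|A_J^{-1}y|\gtrsim|A_J^{-1}x|$, so half of the decay of $\phi_{\cT_I,E}(y)$ produces the factor $(1+|A_J^{-1}x|)^{-E/2}$ and the other half keeps the $y$-integral bounded by $O(1)$ after bounding $\phi_{\cT_J,E}(x-y)$ by $|\cT_J|^{-1}$. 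With that adjustment the kernel bound holds with $C_0=2$ and your argument is complete.
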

\begin{proof}
First it suffices to instead show that for $J \subset I \subset [0, 1]$, we have
\begin{align}\label{e3.9}
\nms{\E_{J}g}_{L^{2}(\phi_{\cT_I, E})}^{2} \lsm \nms{\E_{J}g}_{L^{2}(\phi_{\cT_J, E/C_0})}^{2}.
\end{align}
Suppose $|J| = 1/R'$ and $|I| = 1/R$ with $R' \geq R$. It suffices to only show the case when $I = [0, 1/R]$.
Since $J \subset [0, 1/R]$, the angle between $\cT_I$ and $\cT_J$ is $O(1/R)$.
Therefore $\cT_I$ is contained in a rectangle that is a $O(1)$
dilation of $\cT_I$ but pointing in the same direction as $\cT_J$. Furthermore this dilate of
$\cT_I$ is contained in a $O(1)$ dilation of $\cT_J$. Thus there exists a sufficiently
large absolute constant $C$ such that $\cT_I \subset C\cT_J$. The same reasoning gives that for $k \geq 0$,
$2^{k}\cT_I \subset C2^{k}\cT_J$ where $C$ is an absolute constant.

We first prove an unweighted version of \eqref{e3.9}. Fix $k \geq 0$. Then
\begin{align*}
\frac{1}{|2^k \cT_I|}\|E_{J}g\|^2_{L^{2}(2^{k}\cT_I)} &\leq \nms{\E_{J}g}_{L^{\infty}(C2^{k}\cT_J)}^{2}\\
& \lsm_{E} 2^{3k}\nms{\E_{J}g}_{L^{2}(\phi_{C2^{k}\cT_J, E/100})}^{2} \lsm_{E} 2^{3k}\nms{\E_{J}g}_{L^{2}(\phi_{2^{k}\cT_J, E/100})}^{2}.
\end{align*}
Next, observe that
\begin{align*}
\phi_{\cT_I, E}(x) \lsm_{E} \sum_{k \geq 0}2^{-k(E-3)}\frac{1}{|2^{k}\cT_I|}\mathbbm{1}_{2^{k}\cT_I}(x).
\end{align*}
Therefore
\begin{align*}
\nms{\E_{J}g}_{L^{2}(\phi_{\cT_I, E})}^{2} &\lsm \sum_{k \geq 0}2^{-k(E-3)}\int_{\R^3}|(\E_{J}g)(x)|^{2}\frac{1}{|2^{k}\cT_I|}\mathbbm{1}_{2^{k}\cT_I}(x)\, dx\\
&\lsm_{E}\sum_{k \geq 0}2^{-k(E - 6)}\int_{\R^3}|(\E_{J}g)(x)|^{2}\phi_{2^{k}\cT_J, E/100}(x)\, dx\\
&\lsm_{E}\sum_{k \geq 0}2^{-k(E - 6)}\int_{\R^3}|(\E_{J}g)(x)|^{2}2^{-3k + 3kE/100}\phi_{\cT_J, E/100}(x)\, dx\\
&\lsm_{E}\nms{\E_{J}g}_{L^{2}(\phi_{\cT_J, E/100})}^{2}.
\end{align*}
This completes the proof of Lemma \ref{eq39}.
\end{proof}

We now move on to the proofs of Lemmas \ref{lem:b}--\ref{lem:3b}.

\begin{proof}[Proof of Lemma~\ref{lem:b}]
Let $\{\Box\}$ be a partition of $\R^3$ into cubes of side length $\nu^{-b}$. We write the left hand side of \eqref{190505e3.1} as
\begin{align*}
\sum_{\Box} \int_{\Box} \big( |\E_{I}g|^{2}*\phi_{\cT_I, E} \big) \big( |\E_{I'}g|^{10}* \phi_{\cT_{I'}, E}\big).
\end{align*}
We bound the above expression by
\beq\label{190514e3.6}
\sum_{\Box} \big(\sup_{x\in \Box} |\E_{I'}g|^{10}* \phi_{\cT_{I'}, E}(x)\big) \int_{\Box} \big( |\E_{I}g|^{2}*\phi_{\cT_I, E} \big).
\endeq
We write the latter factor as
\beq\label{190505e3.7}
\int_{\R^3} \big[\int_{\Box_y} |\E_I g(x)|^2dx\big] \phi_{\cT_{I}, E}(y) dy,
\endeq
where $\Box_y:=\Box-y$. By $L^2$ orthogonality (see for instance Appendix of \cite{GZo18}), we have
\beq\label{190514e3.8}
\begin{split}
& \eqref{190505e3.7} \lesim_E \sum_{J\in P_{\nu^b}(I)} \int_{\R^3} \Big[\int_{\R^3} |\E_J g(x)|^2 w_{\Box_y, C_0 E}(x)dx\Big] \phi_{\cT_{I}, E}(y) dy\\
& \lesim_E \sum_{J\in P_{\nu^b}(I)} \int_{\R^3} \Big[\int_{\R^3} |\E_J g(x-y)|^2 w_{\Box, C_0 E}(x)dx\Big] \phi_{\cT_{I}, E}(y) dy\\
& \lesim_E \sum_{J\in P_{\nu^b}(I)} \int_{\R^3} \big(|\E_J g|^2*\phi_{\cT_{J}, E/C_0} \big) w_{\Box, C_0 E},
\end{split}
\endeq
where in the last step we have used Lemma \ref{eq39}. This, combined with the definition of the weight $w_{\Box, C_0 E}$, implies that \eqref{190514e3.6} can be bounded by
\beq\label{190514e3.10}
\sum_{\Box} \sum_{J\in P_{\nu^b}(I)} \sum_{\kappa\in \Z^3} (1+|\kappa|)^{-C_0 E} \big(\sup_{x\in \Box} |\E_{I'}g|^{10}* \phi_{\cT_{I'}, E}(x)\big) \int_{\Box_{\nu^{-b}\kappa}} \big(|\E_J g|^2*\phi_{\cT_{J}, E/C_0} \big).
\endeq
In the end, we just need to observe that
\beq\label{190514e3.11}
\sup_{x\in \Box_{\nu^{-b}\kappa}} |\E_{J}g|^{2}*\phi_{\cT_J, E/C_0}(x)\lsm |\kappa|^{E/C_0} \inf_{x'\in \Box} |\E_{J}g|^{2}*\phi_{\cT_J, E/C_0}(x'),
\endeq
and
\beq\label{190514e3.11a}
\sup_{x \in \Box}|\E_{I'}g|^{10} \ast \phi_{\cT_{I'}, E}(x) \sim_{E} \inf_{x \in \Box}|\E_{I'}g|^{10} \ast \phi_{\cT_{I'}, E}(x)
\endeq
both of which follow from the definition of the weight $\phi$.
Inserting \eqref{190514e3.11} and \eqref{190514e3.11a} into \eqref{190514e3.10} and using that $|\Box_{\nu^{-b}\kappa}| = |\Box|$ shows that \eqref{190514e3.10} is bounded by
\begin{align*}
&(\sum_{\kappa \in \Z^3}(1 + |\kappa|)^{-C_0 E}|\kappa|^{E/C_0})\times\\
&\hspace{0.5in}\sum_{\Box}\sum_{J \in P_{\nu^b}(I)}(\inf_{x'\in \Box} |\E_{J}g|^{2}*\phi_{\cT_J, E/C_0}(x'))(\inf_{x \in \Box}|\E_{I'}g|^{10} \ast \phi_{\cT_{I'}, E}(x))|\Box|\\
&\lsm \sum_{\Box}\sum_{J \in P_{\nu^b}(I)}\int_{\Box}(|\E_{J}g|^{2} * \phi_{\cT_{J}, E/C_{0}})(|\E_{I'}g|^{10} * \phi_{\cT_{I'}, E/C_{0}}).
\end{align*}
This finishes the proof of the lemma.
\end{proof}

\begin{proof}[Proof of Lemma~\ref{lem:2a}]
	Suppose $b \leq a \leq 2b$. Let $\Box$ be a spatial cube of side length $\nu^{-2b}$.  Let $\gamma(\xi) = (\xi,\xi^2,\xi^3)$ and $\xi_1$, $\xi_2$ be the centers
of the intervals $I$ and $I'$. For $\alpha_1, \alpha_2, \alpha_3 \in \N$ with $\alpha_j \leq ja$ for $j = 1,2,3$, consider a parallelepiped
	$$
	\{x \in \R^3 \colon |x \cdot \gamma'(\xi_1)| \leq \nu^{-\alpha_1}, |x \cdot \gamma''(\xi_1)| \leq \nu^{-\alpha_2}, |x \cdot \gamma'''(\xi_1)| \leq \nu^{-\alpha_3}\}.
	$$
	Note that $|\E_I g|$ is morally locally constant on every translate of this parallelepiped. Tile $\R^3$ with essentially disjoint translates of this parallelepiped and let $\mc{T}_{\alpha_1,\alpha_2,\alpha_3}(I)$ be the parallelepipeds in this tiling. Similarly we tile $\R^3$ with essentially disjoint translates of the parallelepiped
	$$
	\{x \in \R^3 \colon |x \cdot \gamma'(\xi_2)| \leq \nu^{-\beta_1}, |x \cdot \gamma''(\xi_2)| \leq \nu^{-\beta_2}, |x \cdot \gamma'''(\xi_2)| \leq \nu^{-\beta_3}\}
	$$
	and define $\mc{T}_{\beta_1,\beta_2,\beta_3}(I')$ to be the parallelepipeds in this tiling whenever $\beta_1,\beta_2,\beta_3 \in \N$ with $\beta_j \leq jb$ for $j = 1,2,3$. Consider
	\beq\label{190505e3.9}
	\int_{\Box} \big( |\E_{I}g|^{2}*\phi_{\cT_I, E} \big) \big( |\E_{I'}g|^{10}* \phi_{\cT_{I'}, E}\big).
	\endeq
Since $b \leq a$, notice that there exists $c_T, c_{T'}$ for every $T\in \mc{T}_{a, 2b, 2b}(I)$ and every $T'\in \mc{T}_{b, 2b, 2b}(I')$ such that for all $x \in \R^3$,
\begin{align}\label{essentially_const}
\begin{aligned}
(|\E_{I}g|^{2}*\phi_{\cT_I, E})(x) &\sim_{E} \sum_{T\in \mc{T}_{a, 2b, 2b}(I)} c_T^2 \mathbbm{1}_{T}(x)\\
(|\E_{I'}g|^{10}*\phi_{\cT_{I'}, E})(x) &\sim_{E} \sum_{T'\in \mc{T}_{b, 2b, 2b}(I')} c_{T'}^{10} \mathbbm{1}_{T'}(x).
\end{aligned}
\end{align}
Since if $T \in \mc{T}_{a, 2b, 2b}(I)$ and $T' \in \mc{T}_{b, 2b, 2b}(I')$ then $T, T' \subset 2\Box$, it follows that \eqref{190505e3.9} can be bounded by
\beq\label{190505e3.11}
\int_{\Box} \big(\sum_{T\in \mc{T}_{a, 2b, 2b}(I); T\subset 2\Box} c_T^2 \mathbbm{1}_{T} \big) \big( \sum_{T'\in \mc{T}_{b, 2b, 2b}(I'); T'\subset 2\Box} c_{T'}^{10} \mathbbm{1}_{T'}\big)
\endeq
	For such $T$ and $T'$, we have a crucial geometric inequality
	\begin{equation} \label{eq:tube_intersect}
	\frac{|T \cap T'|}{|\Box|} \lesssim \nu^{-2} \frac{|T|}{|\Box|} \frac{|T'|}{|\Box|},
	\end{equation}
	because
	\[
	|T \cap T'|  \leq |\{ x \in \R^3 \colon |x \cdot \gamma'(\xi_1)| \lesssim \nu^{-a}, |x \cdot \gamma'(\xi_2)| \lesssim \nu^{-b},  |x \cdot \gamma''(\xi_2)| \lesssim \nu^{-2b} \}|
	\]
	the latter of which is comparable to
	\[
	\begin{split}
	 \nu^{-a} \nu^{-b} \nu^{-2b} \left| \det
	\left( \begin{array}{c}
	\gamma'(\xi_1) \\
	\gamma'(\xi_2) \\
	\gamma''(\xi_2)
	\end{array} \right)^{-1} \right| &\sim \frac{(\nu^{-a} \nu^{-2b} \nu^{-2b}) (\nu^{-b} \nu^{-2b} \nu^{-2b})}{(\nu^{-2b})^3} (\xi_1 - \xi_2)^{-2}\\
    & \lesssim \nu^{-2} \frac{|T| |T'|}{|\Box|}.
	\end{split}
	\]
	This implies
	\beq\label{190505e3.13}
	\begin{split}
	\eqref{190505e3.11}& \lesim \frac{\nu^{-2}}{|\Box|} \Big(\int_{\Box} \sum_{T\in \mc{T}_{a, 2b, 2b}(I); T\subset 2\Box} c_T^2 \mathbbm{1}_{T}\Big) \Big(\int_{\Box} \sum_{T'\in \mc{T}_{b, 2b, 2b}(I'); T'\subset 2\Box} c_{T'}^{10} \mathbbm{1}_{T'}\Big)\\
	& \lesim_{E} \frac{\nu^{-2}}{|\Box|} \big( \int_{\Box}  |\E_{I}g|^{2}*\phi_{\cT_I, E} \big) \big(\int_{\Box} |\E_{I'}g|^{10}* \phi_{\cT_{I'}, E}\big).
	\end{split}
	\endeq
By $L^2$ orthogonality and an argument that is essentially the same as that in \eqref{190514e3.8}, we have
\beq\label{190505e3.14}
\begin{split}
\int_{\Box}  |\E_{I}g|^{2}*\phi_{\cT_I, E}  \lesim_{E} \sum_{J\in P_{\nu^{2b}}(I)} \int_{\R^3} \big(|\E_{J}g|^{2}*\phi_{\cT_J, E/C_0} \big) w_{\Box, C_0 E}.
\end{split}
\endeq
By the definition of the weight $w_{\Box, C_0 E}$, the term \eqref{190505e3.13} can be bounded by
\begin{align*}
	\begin{split}
	\nu^{-2}\sum_{J\in P_{\nu^{2b}}(I)}  \sum_{\kappa\in \Z^3} \frac{(1+|\kappa|)^{-C_0 E}}{|\Box|} \big(\int_{\Box_{\nu^{-2b}\kappa}}|\E_{J}g|^{2}*\phi_{\cT_J, E/C_0} \big) \big(\int_{\Box} |\E_{I'}g|^{10}* \phi_{\cT_{I'}, E}\big)
	\end{split}
\end{align*}
Applying \eqref{190514e3.11}, with $b$ replaced by $2b$ shows that the above is bounded by
\begin{align*}
&\nu^{-2}\sum_{\kappa \in \Z^3}(1 + |\kappa|)^{-C_{0}E}|\kappa|^{E/C_0}\sum_{J \in P_{\nu^{2b}}(I)}(\inf_{x'\in \Box} |\E_{J}g|^{2}*\phi_{\cT_J, E/C_0}(x'))(\int_{\Box} |\E_{I'}g|^{10}* \phi_{\cT_{I'}, E})\\
&\lsm \nu^{-2}\sum_{J \in P_{\nu^{2b}}(I)}\int_{\Box}(|\E_{J}g|^{2}*\phi_{\cT_J, E/C_0})(|\E_{I'}g|^{10}* \phi_{\cT_{I'}, E}).
\end{align*}
Summing up over $\{\Box\}$, a partition of $\R^3$ into cubes of side length $\nu^{-2b}$, then finishes the proof of Lemma \ref{lem:2a}.
\end{proof}

\begin{proof}[Proof of Lemma~\ref{lem:3b}]
	Suppose $2b \leq a \leq 3b$. We may follow line by line the proof of Lemma~\ref{lem:2a}, except that
	\begin{itemize}
		\item the side length $\nu^{-2b}$ of the spatial cube $\Box$  replaced by $\nu^{-3b}$;
		\item $\mc{T}_{a,2b,2b}(I)$  replaced by $\mc{T}_{a,3b,3b}(I)$; and
		\item $\mc{T}_{b,2b,2b}(I')$  replaced by $\mc{T}_{b,2b,3b}(I')$.
	\end{itemize}
	This is because when $2b \leq a \leq 3b$, the uncertainty principle asserts that morally speaking, $|\E_I g|$ is locally constant on all tubes in $\mc{T}_{a,3b,3b}(I)$, and $|\E_{I'} g|$ is locally constant on all tubes in $\mc{T}_{b,2b,3b}(I')$. This shows that \eqref{essentially_const} holds with $\mc{T}_{a,2b,2b}(I)$ replaced by $\mc{T}_{a,3b,3b}(I)$, and $\mc{T}_{b,2b,2b}(I')$ replaced by $\mc{T}_{b,2b,3b}(I')$. The crucial geometric inequality \eqref{eq:tube_intersect} now follows since we still have
	\[
	|T \cap T'|  \leq |\{ x \in \R^3 \colon |x \cdot \gamma'(\xi_1)| \lesssim \nu^{-a}, |x \cdot \gamma'(\xi_2)| \lesssim \nu^{-b},  |x \cdot \gamma''(\xi_2)| \lesssim \nu^{-2b} \}|
	\]
	the latter of which is comparable to
	\[
	\begin{split}
	\nu^{-a} \nu^{-b} \nu^{-2b} \left| \det
	\left( \begin{array}{c}
	\gamma'(\xi_1) \\
	\gamma'(\xi_2) \\
	\gamma''(\xi_2)
	\end{array} \right)^{-1} \right|& \sim \frac{(\nu^{-a} \nu^{-3b} \nu^{-3b}) (\nu^{-b} \nu^{-2b} \nu^{-3b})}{(\nu^{-3b})^3} (\xi_1 - \xi_2)^{-2}\\
& \lesssim \nu^{-2} \frac{|T| |T'|}{|\Box|}.
	\end{split}
	\]
In lieu of \eqref{190505e3.14}, since now $\Box$ is a cube of side length $\nu^{-3b}$, and $|I| = \nu^a \geq \nu^{3b}$, we may apply $\ell^2 L^2$ decoupling, and bound instead
\begin{align*}
\begin{split}
\int_{\Box}  |\E_{I}g|^{2}*\phi_{\cT_I, E}  \lesim \sum_{J\in P_{\nu^{3b}}(I)} \int_{\R^3} \big(|\E_{J}g|^{2}*\phi_{\cT_J, E/C_0} \big) w_{\Box, C_0 E}.
\end{split}
\end{align*}
	This completes the proof of Lemma~\ref{lem:3b}.
\end{proof}

\section{The second bilinear constant $\mc{M}_{2, a, b}$} \label{sect:M2ab}
We will now prove the following result.
\begin{lemma} \label{lem:4.1}
	Let $1 \leq a \leq b$ and $\nu \in 2^{-2^{\mathbb{N}}} \cap (0,1/1000)$. Let $I$ be an interval of length $\nu^{a}$ and $I'$ be an interval of length $\nu^{b}$ such that $d(I, I')\ge 2\nu$. Then for every $\vep > 0$
there exists an absolute constant $C_0$ such that
	\beq\label{190305e4.1}
	\begin{split}
	& \int_{\R^3}\big( |\E_{I}g|^{4}*\phi_{\cT_I, E} \big) \big( |\E_{I'}g|^{8}* \phi_{\cT_{I'}, E}\big)\\
	& \lesim_{\varepsilon, E} \nu^{-(1+\varepsilon)(2b-2a) - C_0} \sum_{J\in P_{\nu^{2b-a}}(I)} \int_{\R^3}\big( |\E_{J}g|^{4}*\phi_{\cT_J, E/C_0} \big) \big( |\E_{I'}g|^{8}* \phi_{\cT_{I'}, E/C_0}\big).
	\end{split}
	\endeq
\end{lemma}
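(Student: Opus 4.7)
The plan is to reduce Lemma~\ref{lem:4.1} to a sharp small-ball $\ell^4 L^4$ decoupling for the parabola in $\R^2$, applied slice-wise in the $x_3$ direction. The crucial geometric observation is that $\gamma'''(\xi) = (0,0,6)$ is parallel to the $x_3$-axis for every $\xi$, so both $\cT_I$ and $\cT_{I'}$ have their longest axis aligned with $x_3$. Slicing perpendicular to $x_3$ turns the 3D problem into a family of 2D problems on parallelogram cross-sections, and on those 2D slices we can invoke the deeper parabola estimate.

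First I would tile $\R^3$ by essentially disjoint translates of a parallelepiped $\Box$ of the same shape as $\cT_{I'}$, namely of dimensions $\nu^{-b}\times\nu^{-2b}\times\nu^{-3b}$ oriented along $\gamma'(\xi_2)$, $\gamma''(\xi_2)$, $\gamma'''(\xi_2)$, where $\xi_2$ is the center of $I'$. On each such $\Box$, the convolution $|\E_{I'}g|^8\ast \phi_{\cT_{I'},E}$ is essentially constant, say with value $c_\Box$, so the left-hand side of \eqref{190305e4.1} is comparable to $\sum_\Box c_\Box \int_\Box |\E_I g|^4 \ast \phi_{\cT_I, E}$, modulo a rapidly decaying Schwartz tail from the weight $\phi_{\cT_{I'},E}$ that is handled as in the proof of Lemma~\ref{lem:2a}. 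The remaining task is to bound $\int_\Box |\E_I g|^4 \ast \phi_{\cT_I, E}$ by $\nu^{-(1+\varepsilon)(2b-2a) - C_0}\sum_{J\in P_{\nu^{2b-a}}(I)} \int_\Box |\E_J g|^4 \ast \phi_{\cT_J, E/C_0}$.

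For this key step, I would fix $x_3$ and use the identity
\[
\E_I g(x_1, x_2, x_3) \;=\; \E^{\mathrm{par}}_I h_{x_3}(x_1, x_2), \qquad h_{x_3}(\xi) := g(\xi)\, e(\xi^3 x_3),
\]
where $\E^{\mathrm{par}}_I$ is the 2D extension operator for the parabola $(\xi,\xi^2)$ on $I$; the analogous identity holds for $\E_J g$ with $J\subset I$ and the same $h_{x_3}$. For $x_3$ in the $x_3$-range of $\Box$, the $(x_1,x_2)$-cross-section of $\Box$ is (up to a shearing depending on $\xi_2$) a parallelogram of dimensions $\sim \nu^{-b}\times\nu^{-2b}$ in the tangent-normal directions of the arc $I$, which is strictly smaller than the natural parabola-scale rectangle $\nu^{-a}\times\nu^{-2a}$. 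Thus we are in the regime of sharp small-ball $\ell^4 L^4$ decoupling for the parabola, which decouples $\E^{\mathrm{par}}_I h_{x_3}$ into subarcs of length $\nu^{2b-a}$ on this cross-section with loss $N^{1+\varepsilon}$, where $N = \nu^{-(2b-2a)}$ is the number of subarcs. Integrating the slice-wise bound over $x_3$ within $\Box$, summing over $\Box$, and transitioning from $\phi_{\cT_I, E}$ to $\phi_{\cT_J, E/C_0}$ via Lemma~\ref{eq39} yields the desired estimate.

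The main obstacle is the invocation of the sharp small-ball $\ell^4 L^4$ parabola decoupling with the $N^{1+\varepsilon}$ loss. This is distinct from the classical Bourgain--Demeter $\ell^2 L^6$ parabola decoupling, and it is the precise input that gives the $(1+\varepsilon)$ exponent in the statement. A secondary technical issue is that the cross-section is a parallelogram rather than a rectangle, with $\xi_2$-dependent shearing, and the interfaces between the weights $\phi_{\cT_I,E}$ on $\R^3$ and the 2D cutoffs appearing in the slice-wise decoupling must be tracked carefully; this is essentially a technical modification of the weighted machinery already seen in Section~\ref{section:tube_slab_proof}, but care is needed to avoid degrading the exponent beyond $(1+\varepsilon)(2b-2a)$.
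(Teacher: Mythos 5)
There is a genuine gap, and it lies in the choice of slicing direction. You fix $x_3$ and invoke small--ball $\ell^4L^4$ decoupling for the standard parabola $(\xi,\xi^2)$ on the $(x_1,x_2)$--cross--sections of boxes $\Box$ congruent to $\cT_{I'}$. Those cross--sections have extent only $\nu^{-b}$ in the $x_1$ (i.e.\ $\gamma'$) direction, because that is the short axis of $\cT_{I'}$, and this cannot be enlarged without destroying the local constancy of $|\E_{I'}g|^8*\phi_{\cT_{I'},E}$. But to decouple $I$ into arcs of length $\nu^{2b-a}$ through the phase $e(\xi x_1+\xi^2x_2)$, the uncertainty principle demands an $x_1$--localization no finer than $\nu^{-(2b-a)}$, and $\nu^{-b}\ll\nu^{-(2b-a)}$ whenever $a<b$. (Your assertion that the cross--section is ``strictly smaller than $\nu^{-a}\times\nu^{-2a}$'' is also reversed, since $b\ge a$; after rescaling $I$ to $[0,1]$ the cross--section is the parabolic rectangle of dimensions $\sigma^{-1/2}\times\sigma^{-1}$ with $\sigma=\nu^{2b-2a}$, not a square of side $\sigma^{-1}$.) On such a rectangle one can decouple losslessly only down to arcs of length $\nu^b$; pushing to $\nu^{2b-a}$ there, a single wave packet at scale $\sigma^{1/2}$ shows the $\ell^4L^4$ constant is at least $\sigma^{-3/8}$ rather than the $\sigma^{-1/4-\vep}$ of Lemma \ref{190209lemma9}, which would replace $\nu^{-(1+\vep)(2b-2a)}$ by roughly $\nu^{-\frac32(2b-2a)}$ and break the numerology of the iteration in Section \ref{last_section} (the eigenvector identity $A+2B=-\ld$ fails). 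A further warning sign is that the hypothesis $\mathrm{dist}(I,I')\ge\nu$ plays no role anywhere in your argument.

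The paper slices the other way: it fixes $x_1$ and runs the decoupling in the $(x_2,x_3)$--cross--section, which has the ample dimensions $\nu^{-2b}\times\nu^{-3b}$ (Lemma \ref{l4}). Writing $I=[d,d+\nu^a]$ with $|d|\ge\nu$ after translating $I'$ to $[0,\nu^b]$, one rescales $t=d+\nu^a s$ and applies the linear change of variables $(y',z')=T(x_2,x_3)$ with $T=\bigl(\begin{smallmatrix}2\nu^ad&3\nu^ad^2\\ \nu^{2a}&3\nu^{2a}d\end{smallmatrix}\bigr)$, which turns the phase into $y'\bigl(s-\tfrac{\nu^{2a}}{3d^2}s^3\bigr)+z'\bigl(s^2+\tfrac{2\nu^a}{3d}s^3\bigr)$ --- a perturbed parabola --- and turns the cross--section into a union of $\nu^{-a}$ squares of side $\sim\nu^{-(2b-2a)}$, exactly the small--ball scale needed to reach $\nu^{2b-2a}$--arcs in $s$, i.e.\ $\nu^{2b-a}$--arcs in $t$. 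The transversality $\mathrm{dist}(I,I')\ge\nu$ is precisely what makes $T$ invertible with $|\det T|\gtrsim\nu^{3a+2}$, so that all losses are absorbed into $\nu^{-C_0}$. So the correct input is indeed the small--ball $\ell^4L^4$ theorem (via Remark \ref{small_ball_decoupling_4_general} for perturbed parabolas), but applied to the curve $t\mapsto(t^2,t^3)$ restricted to $I$ in the $(x_2,x_3)$--variables, not to $(t,t^2)$ in $(x_1,x_2)$.
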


Once we prove this, by applying the definition of $\mc{M}_{2, a, b}$, we obtain that
\begin{align*}
\begin{split}
 \int_{\R^3}\big( |\E_{I}g|^{4}*\phi_{\cT_I, E} \big) &\big( |\E_{I'}g|^{8}* \phi_{\cT_{I'}, E}\big)\\
 & \lesim_{\varepsilon, E}\nu^{-(1+\varepsilon)(2b-2a) - C_0} \mc{M}_{2, 2b-a, b}(\delta, \nu, E/C_0)^{12}\times\\
 &\quad\bigg( \sum_{J \in P_{\delta}(I)} \|\E_J g\|_{L^{12}(\R^3)}^4 \bigg) \bigg( \sum_{J' \in P_{\delta}(I')} \|\E_{J'} g\|_{L^{12}(\R^3)}^4 \bigg)^{2}.
\end{split}
\end{align*}
This concludes the desired estimate in Lemma \ref{cor_small_ball_bi}. \\

The proof of Lemma \ref{lem:4.1} consists of two steps. In the first step, we will prove a decoupling inequality for the parabola at a ``small" spatial  scale. In the second step, we will combine this decoupling inequality with an (rigorous) interpretation of the uncertainty principle and a few changes of variables to finish the proof of Lemma \ref{lem:4.1}.

In addition to the weight functions defined in the notation section, we will also need to consider weight functions adapted to squares in $\R^2$ and intervals $I \subset \R$.
In particular, such weight functions will appear (and only appear) in the statement and proofs of Lemmas \ref{190209lemma9} and \ref{l4}.
To this end, given a square $B \subset \R^2$ centered at $c = (c_1, c_2)$ of side length $R$, define
$$\wt{w}_{B, E}(x) := (1 + \frac{|x_1 - c_1|}{R})^{-E}(1 + \frac{|x_2 - c_2|}{R})^{-E}$$
and
$$w_{B, E}(x) := (1 + \frac{|x - c|}{R})^{-E}$$
for $x = (x_1, x_2) \in \R^2$.
This is a slight abuse of notation from $w_{B, E}$ where $B$ is a cube in $\R^3$ but we hope the distinction will be clear from context.

Next, for an interval $I \subset \R$ centered at $c$ of length $R$,  we let
$$w_{I, E}(x) := (1 + \frac{|x - c|}{R})^{-E}$$
for $x \in \R$.

\subsection{$\ell^4 L^4$ decoupling on small spatial scales}\label{20190607sub4.1}

\begin{lemma}\label{190209lemma9}
Let
$$
(E_{I}g)(x) := \int_{I}g(\xi)e(\xi x_1 + \xi^{2}x_2)\, d\xi
$$
be the extension operator for the parabola associated to a dyadic interval $I\subset [0, 1]$.
Then for every $\varepsilon >0$, every $\delta \in 2^{-\N}$ and every square $B_{\delta^{-1}} \subset \R^2$ of side length $\delta^{-1}$, we have the decoupling inequality
\begin{align}\label{small_ball_decoupling_4}
\nms{E_{[0, 1]}g}_{L^{4}(w_{B_{\delta^{-1}}, E})} \lsm_{\varepsilon, E} \delta^{-\frac 1 4-\varepsilon}\Big(\sum_{J \in P_{\delta}([0, 1])}\nms{E_{J}g}_{L^{4}(w_{B_{\delta^{-1}}, E})}^{4}\Big)^{1/4}
\end{align}
for every function $g \colon [0, 1] \rightarrow \C$.
\end{lemma}

If $B_{\delta^{-1}}$ is replaced by $B_{\delta^{-2}}$ in \eqref{small_ball_decoupling_4}, then this estimate would be a trivial consequence of the $\ell^2$ decoupling inequality of Bourgain and Demeter \cite{BD15,BD17}. Also, if we were to prove \eqref{small_ball_decoupling_4} with the constant $\delta^{-1/4}$ replaced by $\delta^{-1/2}$, then this would follow easily by interpolation between $L^2$ and $L^{\infty}$; see Lemma~\ref{lem:trivialL4} below. It is worth mentioning that in \eqref{small_ball_decoupling_4}, the power of $\delta^{-1}$ is optimal. This can be seen by taking the function $g$ to be the indicator function of $[0, 1]$.

Lemma~\ref{190209lemma9} is also a special case of Demeter, Guth and Wang \cite[Theorem 3.1]{DGW19}. We provide a proof in our simpler special case.

\begin{proof}[Proof of Lemma \ref{190209lemma9}. ]
For readers familiar with the Bourgain-Guth iterations in \cite{BG11}, we first sketch a possible proof by making free use of the uncertainty principle and ignoring all Schwartz tails. This should make clear the ideas behind the rigourous proof, which will follow immediately after.

Let $\varepsilon > 0$ and $K \in 2^{\N}$ to be chosen depending only on $\varepsilon$. Let $\delta \in 2^{-\N}$ be such that $\delta^{-1} \geq K$. Then the Bourgain-Guth dichotomy classifies each square $B_K \subset B_{\delta^{-1}}$ of side length $K$ as either broad or narrow, and summing the resulting estimates gives
\[
\begin{split}
\|E_{[0, 1]} g\|^4_{L^4(B_{\delta^{-1}})} & \leq 10 \sum_{\alpha\in P_{K^{-1}}} \|E_{\alpha}g\|_{L^4(B_{\delta^{-1}})}^4 +C_K \sum_{\substack{\alpha_1, \alpha_2 \in P_{K^{-1}} \\ \dist(\alpha_1, \alpha_2)\ge 4/K}} \Big\| \prod_{j=1}^2 |E_{\alpha_j}g|^{1/2}\Big\|_{L^4(B_{\delta^{-1}})}^4.
\end{split}
\]
The second term on the right hand side can be bounded by
\[
\leq C_K \delta^2 \Big( \sum_{J\in P_{\delta}}\|E_{J} g\|^2_{L^2(B_{\delta^{-1}})}\Big)^2
\leq
C_K \delta^{-1} \sum_{J\in P_{\delta}}\|E_{J} g\|^4_{L^4(B_{\delta^{-1}})}
\]
where in the first estimate we used bilinear restriction followed by local $L^2$ orthogonality, and in the second estimate we used H\"{o}lder's inequality.
As a result, we obtain
\begin{equation} \label{eq:step1}
\|E_{[0, 1]} g\|^4_{L^4(B_{\delta^{-1}})} \leq 10 \sum_{\alpha\in P_{K^{-1}}} \|E_{\alpha}g\|_{L^4(B_{\delta^{-1}})}^4 + C_K \delta^{-1} \sum_{J\in P_{\delta}}\|E_{J} g\|^4_{L^4(B_{\delta^{-1}})}.
\end{equation}

We will rescale \eqref{eq:step1} as follows. Let $\sigma \in 2^{-\N}$ be such that $\sigma \geq (K \delta)^{1/2}$, and $I \subset [0,1]$ be a dyadic interval of length $\sigma$. We apply \eqref{eq:step1} to $g_{\sigma}$ in place of $g$ on squares of side length $\sigma^2 \delta^{-1}$, where $g_{\sigma}$ is the composition of $g$ with an affine map that maps $I$ bijectively onto $[0,1]$. Since $\|E_I g\|_{L^4(B_{\delta^{-1}})}^4 = \sigma^2 \|E_{[0,1]} g_{\sigma}\|_{L^4(R'_{\delta^{-1}})}^4$ where $R'_{\delta^{-1}}$ is a parallelogram of size $\sigma \delta^{-1} \times \sigma^2 \delta^{-1}$, which in turn can be covered by a union of $\sim \sigma^{-1}$ squares of side lengths $\sigma^2 \delta^{-1}$, we obtain
\[
\|E_I g\|_{L^4(B_{\delta^{-1}})}^4
\leq 10 \sum_{\alpha\in P_{\sigma/K}(I)} \|E_{\alpha} g\|^4_{L^4(B_{\delta^{-1}})}+ C_K \sigma^2 \delta^{-1} \sum_{J\in P_{\sigma^{-1} \delta}(I)} \|E_{J} g\|^4_{L^4(B_{\delta^{-1}})}.
\]
By interpolating a trivial bound at $L^{\infty}$ with the inequality at $L^2$ obtained via orthogonality, we can decouple the second term above from frequency scale $\sigma^{-1} \delta$ down to $\delta$, and obtain
\begin{equation} \label{eq:step2}
\|E_I g\|_{L^4(B_{\delta^{-1}})}^4
\leq 10 \sum_{\alpha\in P_{\sigma/K}(I)} \|E_{\alpha} g\|^4_{L^4(B_{\delta^{-1}})}+ C_K \delta^{-1} \sum_{J\in P_{\delta}(I)} \|E_{J} g\|^4_{L^4(B_{\delta^{-1}})}.
\end{equation}

We may now apply \eqref{eq:step2} repeatedly, for $\sigma = 1, K^{-1}, K^{-2}, \dots, K^{-(M-1)}$ where $M$ is the unique positive integer so that $K^{-(M-1)} \geq (K \delta)^{1/2} > K^{-M}$ (so roughly $K^{-M} \sim \delta^{1/2}$), and obtain
\[
\|E_{[0, 1]}g\|_{L^4(B_{\delta^{-1}})}^4 \le 10^M \sum_{\alpha\in P_{K^{-M}}} \|E_{\alpha}g\|_{L^4(B_{\delta^{-1}})}^4 + C_K M \delta^{-1} \sum_{J\in P_{\delta}} \|E_{J} g\|^4_{L^4(B_{\delta^{-1}})}.
\]
Finally, again by interpolating a trivial bound at $L^{\infty}$ with the inequality at $L^2$ obtained via orthogonality, the first term above can be estimated by
\[
C 10^M \Big( \frac{K^{-M}}{\delta} \Big)^{2} \sum_{\alpha\in P_{\delta}} \|E_{\alpha}g\|_{L^4(B_{\delta^{-1}})}^4.
\]
Since $K^{-M} / \delta \leq (K \delta)^{1/2} / \delta = K^{1/2} \delta^{-1/2}$, we then obtain
\[
\|E_{[0, 1]}g\|_{L^4(B_{\delta^{-1}})}^4 \le (C 10^M K \delta^{-1} + C_K M \delta^{-1}) \sum_{J\in P_{\delta}} \|E_{J} g\|^4_{L^4(B_{\delta^{-1}})}
\]
which via \cite[Lemma 4.1]{BD17} implies \eqref{small_ball_decoupling_4} because $M \leq \frac{1}{2} (1 + \frac{ \log \delta^{-1} }{ \log K })$ and $K$ can be chosen sufficiently large depending on $\varepsilon$.

Now that the idea of the proof is laid out, we will give a proof with more details, that proves a slightly more general statement and allows us to later deal with a general $C^3$ curve with curvature in place of the parabola. To state this slightly more general statement we need some notations. Suppose $\delta \in 2^{-\N}$ and $J \in P_{\delta}$. We denote by $T_J$ the parallelogram $\{(\xi,\eta) \in \R^2 \colon \xi \in J, \, |\eta - (a_J^2 + 2 a_J (\xi-a_J))| \leq \delta\}$; here $a_J$ is the left endpoint of $J$. Note that $\{T_J\}_{J \in P_{\delta}}$ is a family of essentially disjoint parallelograms of sizes $\sim \delta \times \delta$. We will denote by $f_J$ the inverse Fourier transform of $\mathbbm{1}_{T_J} \widehat{f}$, where $\mathbbm{1}_{T_J}$ is the indicator function of $T_J$. We will prove that for every $\varepsilon > 0$, if $\delta \in 2^{-\N}$ and $f = \sum_{J \in P_{\delta}} f_J$, then
\begin{equation} \label{eq:decoup_parabola_parallelogram}
\|f\|_{L^4(\R^2)} \lesssim_{\varepsilon} \delta^{-\frac{1}{4} - \varepsilon} \Big( \sum_{J \in P_{\delta}} \|f_{J}\|_{L^4(\R^2)}^4 \Big)^{1/4}.
\end{equation}

Indeed, let $K = K(\varepsilon)$ to be chosen depending only on $\varepsilon$. For each square $B_K \subset \R^2$ of side length $K$ and each $\alpha\in P_{K^{-1}}$, define
$$
c_{\alpha}(B_K):= \left( \frac{1}{|B_K|} \int_{B_K} |f_{\alpha}|^4 \right)^{1/4}.
$$
We will use the following form of uncertainty principle:
\begin{equation} \label{eq:uncertainty_2d}
c_{\alpha}(B_K) \leq C \inf_{x \in B_K} \left( \int_{\R^2} |f_{\alpha}(x-y)|^2 w_K(y) dy \right)^{1/2}
\end{equation}
where $w_K(y) := K^{-2} (1 + K^{-1} |y|)^{-100}$, which can be justified rigorously by noting that $f_{\alpha}$ is left unchanged by a Schwartz Fourier multiplier that is $1$ on a ball of radius $\sim K^{-1}$ containing the Fourier support of $f_{\alpha}$, and then applying the Cauchy-Schwarz inequality (note that $w_K(y) \sim w_K(y')$ whenever $|y-y'| \leq K$).

Now given such a square $B_K$, either there exists $\alpha^* \in P_{K^{-1}}$ such that $c_{\alpha}(B_K) \leq \frac{1}{K^{1/4}} c_{\alpha^*}(B_K)$ for all $\alpha \in P_{K^{-1}}$ with $\dist(\alpha,\alpha^*) > 4/K$, or there exists $\alpha^*, \alpha^{**} \in P_{K^{-1}}$, with $\dist(\alpha^*, \alpha^{**}) > 4/K$, so that $c_{\alpha}(B_K) \leq (K^{1/4} c_{\alpha^*}(B_K) c_{\alpha^{**}}(B_K))^{1/2}$. In the first case,
\[
\int_{B_K} |f|^4 \leq 10 \int_{B_K} |f_{\alpha^*}|^4 \leq 10 \sum_{\alpha \in P_{K^{-1}}} \int_{B_K} |f_{\alpha}|^4,
\]
while in the second case, the uncertainty principle \eqref{eq:uncertainty_2d} gives
\[
\begin{split}
\int_{B_K} |f|^4
\leq \, & C_K \int_{\R^2 \times \R^2} \left( \int_{B_K} |f_{\alpha^*}(x-y_1) f_{\alpha^{**}}(x-y_2)|^2 dx \right) w_K(y) dy \\
\leq \,  & C_K \sum_{\substack{\alpha_1, \alpha_2 \in P_{K^{-1}} \\ \dist(\alpha_1,\alpha_2) \geq 4/K}} \int_{\R^2 \times \R^2} \left( \int_{B_K} |f_{\alpha_1}(x-y_1) f_{\alpha_2}(x-y_2)|^2 dx \right) w_K(y) dy
\end{split}
\]
where we have written $w_K(y)$ as a shorthand for $ w_K(y_1) w_K(y_2)$.
Now let $B \subset \R^2$ be a square of side length $\delta^{-1}$. Then summing the previous estimates over all squares $B_K \subset B$ with side lengths $K$, we obtain
\begin{equation} \label{eq:local_L4}
\begin{split}
 & \int_{B} |f|^4
\leq 10 \sum_{\alpha \in P_{K^{-1}}} \int_{B} |f_{\alpha}|^4 \\
& \quad + C_K \sum_{\substack{\alpha_1, \alpha_2 \in P_{K^{-1}} \\ \dist(\alpha_1,\alpha_2) \geq 4/K}} \int_{\R^2 \times \R^2} \left( \int_{B} |f_{\alpha_1}(x-y_1) f_{\alpha_2}(x-y_2)|^2 dx \right) w_K(y) dy.
\end{split}
\end{equation}
To estimate the second term on the right, for each fixed $(y_1,y_2) \in \R^2 \times \R^2$, we apply bilinear restriction estimate in $\R^2$ to $F_1 := \eta_B(x) f_{\alpha_1}(x-y_1)$ and $F_2(x) := \eta_B(x) f_{\alpha_2}(x-y_2)$ where $\eta_B$ is a Schwartz function whose Fourier transform is supported in a ball of radius $\delta$ centered at $0$, and $|\eta| \geq 1$ on $B$:
\begin{lemma} \label{lem:bil_rest_plane}
If $0 < \delta \leq K^{-1} \leq 1$ and $S_1, S_2$ be $\delta$-neighborhoods of two arcs of the parabola $(\xi,\xi^2)$ that are of lengths $K^{-1}$ and at least $4/K$ apart, then for any $F_1, F_2 \colon \R^2 \to \C$ whose Fourier transforms are supported on $S_1$ and $S_2$ respectively, we have
\[
\int_{\R^2} |F_1 F_2|^2 \leq C_K \delta^2 \prod_{j=1}^2 \left( \int_{\R^2} |F_j|^2 \right).
\]
\end{lemma}
The second term on the right hand side of \eqref{eq:local_L4} is then bounded by
\[
 C_K \delta^2 \sum_{\substack{\alpha_1, \alpha_2 \in P_{K^{-1}} \\ \dist(\alpha_1,\alpha_2) \geq 4/K}} \int_{\R^2 \times \R^2} \prod_{j=1}^2 \left( \int_{\R^2} |\eta_B(x) f_{\alpha_j}(x-y_j)|^2 \right) w_K(y) dy.
\]
By local $L^2$ orthogonality,
\[
\int_{\R^2} |\eta_B(x) f_{\alpha_j}(x-y_j)|^2 \leq C \sum_{J \in P_{\delta}(\alpha_j)} \int_{\R^2} |\eta_B(x) f_J(x-y_j)|^2
\]
for $j = 1,2$. It follows that
\[
\int_{\R^2} |\eta_B(x) f_{\alpha_j}(x-y_j)|^2 \leq C \delta^{-3/2} \left( \sum_{J \in P_{\delta}} \int_{\R^2} |f_J(x-y_j)|^4 w_B(x) \right)^{1/2}
\]
where we applied Cauchy-Schwarz to both the sum in $J$ and the integral over~$x$. Integrating against $w_K(y_j)$, and using Cauchy-Schwarz again, we see that the second term on the right hand side of \eqref{eq:local_L4} is bounded by
\begin{equation} \label{eq:secondterm}
C_K \delta^{-1} \prod_{j=1}^2 \left( \int_{\R^2} \sum_{J \in P_{\delta}} \int_{\R^2} |f_J(x-y_j)|^4 w_B(x) w_K(y_j) dx dy_j \right)^{1/2}.
\end{equation}
Summing \eqref{eq:local_L4} over all squares $B \subset \R^2$ of side lengths $\delta^{-1}$, and applying Cauchy-Schwarz to the sum over $B$ of \eqref{eq:secondterm}, we obtain
\begin{equation} \label{eq:step1'}
 \int_{\R^2} |f|^4
\leq 10 \sum_{\alpha \in P_{K^{-1}}} \int_{\R^2} |f_{\alpha}|^4 + C_K \delta^{-1} \sum_{J \in P_{\delta}} \int_{\R^2} |f_J(x)|^4.
\end{equation}

We may now rescale \eqref{eq:step1'} and obtain, for every $\sigma \in 2^{-\N}$ with $\sigma \geq (K \delta)^{1/2}$ and every dyadic interval $I \subset [0,1]$ of length $\sigma$, that
\[
\int_{\R^2} |f_I|^4 \leq 10 \sum_{\alpha \in P_{\sigma/K}(I)} \int_{\R^2} |f_{\alpha}|^4 + C_K \sigma^2 \delta^{-1} \sum_{J \in P_{\sigma^{-1} \delta}(I)} \int_{\R^2} |f_J|^4.
\]
The second term on the right hand side can be bounded by the following lemma (with $N = \sigma^{-1}$), which is obtained by interpolation between $L^2$ orthogonality and a trivial bound at $L^{\infty}$:
\begin{lemma} \label{lem:trivialL4}
Let $\{F_j\}_{j=1}^N$ be a family of functions on $\R^2$ whose Fourier supports are contained in disjoint rectangles with sides parallel to coordinate axes. Then
\[
\Big\| \sum_{j=1}^N F_j \Big\|_{L^4(\R^2)} \leq C N^{\frac{1}{2}} \Big( \sum_{j=1}^N \|F_j\|_{L^4(\R^2)}^4 \Big)^{1/4}.
\]
\end{lemma}
We then get
\begin{equation} \label{eq:step2'}
\int_{\R^2} |f_I|^4 \leq 10 \sum_{\alpha \in P_{\sigma/K}(I)} \int_{\R^2} |f_{\alpha}|^4 + C_K \delta^{-1} \sum_{J \in P_{\delta}(I)} \int_{\R^2} |f_J|^4.
\end{equation}

We may now apply \eqref{eq:step2'} repeatedly, for $\sigma = 1, K^{-1}, K^{-2}, \dots, K^{-(M-1)}$ where $M$ is the unique positive integer so that $K^{-(M-1)} \geq (K\delta)^{1/2} > K^{-M}$, and obtain
\[
\int_{\R^2} |f|^4 \leq 10^M \sum_{\alpha \in P_{K^{-M}}} \int_{\R^2} |f_{\alpha}|^4 + C_K M \delta^{-1} \sum_{J \in P_{\delta}} \int_{\R^2} |f_J|^4.
\]
A final application of Lemma~\ref{lem:trivialL4} allows us to bound the first term on the right hand side above. Since $(K^{-M}/\delta)^{2} \leq ((K \delta)^{1/2})/\delta)^{2} = K \delta^{-1}$, we obtain
\[
\int_{\R^2} |f|^4 \leq  (C 10^M K \delta^{-1} + C_K M \delta^{-1})  \sum_{J \in P_{\delta}} \int_{\R^2} |f_J|^4.
\]
\eqref{eq:decoup_parabola_parallelogram} then follows because $M \leq \frac{1}{2}(1+\frac{\log \delta^{-1}}{\log K})$ and $K$ can be chosen sufficiently large depending on $\varepsilon$.

It is well-known that this implies \eqref{small_ball_decoupling_4}, because we can pick a Schwartz function $\eta$ whose Fourier transform is compactly supported in a ball of radius $\delta$ centered at the origin, and use \eqref{eq:decoup_parabola_parallelogram} to decouple $\eta E_{[0,1]}g = \sum_{J \in P_{\delta}} \eta E_J g$; each $\eta E_J g$ has Fourier support contained in a $\delta$-neighborhood of the parabola over $J$.
\end{proof}

For completeness, we include the short proofs of Lemma~\ref{lem:bil_rest_plane} and \ref{lem:trivialL4}.

\begin{proof}[Proof of Lemma~\ref{lem:bil_rest_plane}]
By Plancherel, it suffices to prove that
\[
\|\widehat{F_1}*\widehat{F_2}\|_{L^2(\R^2)} \leq C_K \delta \prod_{j=1}^2 \|\widehat{F_j}\|_{L^2(\R^2)}.
\]
Let $T$ be the bilinear operator given by
$$
T(G_1,G_2) := (\mathbbm{1}_{S_1} G_1)*(\mathbbm{1}_{S_2} G_2)
$$
where $\mathbbm{1}_{S_j}$ is the indicator function of $S_j$, for $j=1,2$. Then by Young's convolution inequality, $T$ is bounded from $L^1(\R^2) \times L^1(\R^2)$ to $L^1(\R^2)$ with norm $\leq 1$. Furthermore, since $\|\mathbbm{1}_{S_1}*\mathbbm{1}_{S_2}\|_{L^{\infty}(\R^2)} \leq C_K \delta^2$, we see that $T$ is bounded from $L^{\infty}(\R^2) \times L^{\infty}(\R^2)$ to $L^{\infty}(\R^2)$ with norm $\leq C_K \delta^2$. Thus by interpolation, $T$ is bounded from $L^2(\R^2) \times L^2(\R^2)$ to $L^2(\R^2)$ with norm $\leq C_K \delta$. Since $\widehat{F_1}*\widehat{F_2} = T(\widehat{F}_1,\widehat{F}_2)$, our claim follows.
\end{proof}

\begin{proof}[Proof of Lemma~\ref{lem:trivialL4}]
Let $R_1$, $\dots$, $R_N$ be disjoint rectangles with sides parallel to the axes containing the Fourier supports of $F_1$, $\dots$, $F_N$. Let $T$ be the $N$-linear operator so that $T(G_1,\dots,G_N)$ is the inverse Fourier transform of $\sum_{j=1}^N \mathbbm{1}_{R_j} \widehat{G_j}$.
Then by Plancherel, $T$ is bounded from $(L^2(\R^2))^N$ to $L^2(\R^2)$ with norm 1, and $T$ is bounded from $(L^{\infty}(\R^2))^N$ to a product $BMO(\R^2)$ with norm $\leq C N$. By interpolation between $L^2$ and $L^{\infty}$, we see that $T$ is bounded from $(L^4(\R^2))^N$ to $L^4(\R^2)$ with norm $\leq C N^{\frac{1}{2}}$. Since $\sum_{j=1}^N F_j = T(F_1,\dots,F_N)$, our claim follows.
\end{proof}

Finally, we state and prove the following generalization of Lemma~\ref{190209lemma9}:

\begin{lemma}\label{lem:small_ball_decoupling_4_general}
Let $\gamma \colon [0,1] \to \R^2$ be a $C^3$ curve with
\[
\|\gamma(\xi)\|_{C^3} \leq 100 \quad \text{and} \quad |\gamma'(\xi) \wedge \gamma''(\xi)| \geq \frac{1}{4}.
\]
Let
$$
(E_{I}g)(x) := \int_{I}g(\xi)e(\gamma(\xi) \cdot x)\, d\xi
$$
be the extension operator for $\gamma$ associated to a dyadic interval $I\subset [0, 1]$.
Then for every $\varepsilon >0$, every $\delta \in 2^{-\N}$ and every square $B_{\delta^{-1}} \subset \R^2$ of side length $\delta^{-1}$, we have the decoupling inequality
\begin{align}\label{small_ball_decoupling_4_general}
\nms{E_{[0, 1]}g}_{L^{4}(w_{B_{\delta^{-1}}, E})} \lsm_{\varepsilon, E} \delta^{-\frac 1 4-\varepsilon}\Big(\sum_{J \in P_{\delta}([0, 1])}\nms{E_{J}g}_{L^{4}(w_{B_{\delta^{-1}}, E})}^{4}\Big)^{1/4}
\end{align}
for every function $g \colon [0, 1] \rightarrow \C$.
\end{lemma}

\begin{proof}
This follows from \eqref{eq:decoup_parabola_parallelogram} via an iteration that goes back to Pramanik and Seeger \cite{PS07}. The key is that on any interval $I \subset [0,1]$, we may Taylor expand $\gamma$ around the left endpoint $a_I \in I$ and obtain
\[
\gamma(\xi) = \gamma(a_I) + \gamma'(a_I) (\xi-a_I) +  \frac{1}{2} \gamma''(\xi_0) (\xi-a_I)^2 + O(|I|^3).
\]
An affine transformation on $\R^2$ will transform the curve parametrized by the first three terms of the above Taylor expansion to the parabola over $[0,|I|]$. The inverse of this affine transformation is given by $A_I(\xi,\eta) := \gamma(a_I) + L_I(\xi,\eta)$ where
\[
L_I(\xi,\eta):= \xi \gamma'(a_I) + \eta \frac{\gamma''(a_I)}{2}
\]
is a linear map on $\R^2$ with determinant $\geq 1/8$. Hence the linear part of $A_I^{-1}$ has norm bounded above independent of $I$, and the curve $\gamma(I)$ is transformed under $A_I^{-1}$ to a curve whose distance from the parabola over $[0,|I|]$ is $O(|I|^3)$. So if $|I| \sim \delta^{1/3}$ and $f_I$ is a function on $\R^2$ whose Fourier transform is supported in an $\delta$-neighborhood of $\gamma(I)$, then
$S_I f_I$
has Fourier transform supported in a $O(\delta)$-neighborhood of the parabola over $[0,|I|]$ where
\begin{equation} \label{eq:fIrescaled}
S_I f(x) := e^{-2\pi i  \gamma(a_I) \cdot L_I^{-t} x } f(L_I^{-t} x)
\end{equation}
is defined so that $\widehat{S_I f}(\xi,\eta) = (\det L_I) \widehat{f}(A_I (\xi,\eta))$. We may then use \eqref{eq:decoup_parabola_parallelogram} to decouple $f_I$ in $L^4(\R^2)$ from frequency scale $\delta^{1/3}$ down to frequency scale $\delta$.

As a result, to decouple down to frequency scale $\delta$, it suffices to decouple down to frequency scale $\delta^{1/3}$. But then we may repeat this argument, and reduce ourselves to decoupling down to frequency scale $\delta^{1/9}$, $\delta^{1/27}$, $\dots$. Hence it suffices to decouple from frequency scale $1$ down to $\delta^{1/3^a}$ for some large positive integer $a$. But that can be done by a trivial decoupling, incurring only a $\delta^{-\varepsilon}$ loss if $a$ is sufficiently big.

To formalize these ideas, let $\delta \in 2^{-\N}$. For each $J \in P_{\delta}$, let $f_J$ be a function on $\R^2$ whose Fourier support is contained in a $\delta$-neighborhood of $\gamma(J)$. If $\sigma \in 2^{-\N \cup \{0\}}$ with $\sigma > \delta$, and $I \in P_{\sigma}$, we write $f_I := \sum_{J \in P_{\delta}, \, J \subset I} f_J$. Then the Fourier support of $f_I$ is contained in the $\sigma$-neighborhood of $\gamma(I)$. We will prove that for every $\varepsilon > 0$,
\begin{equation} \label{eq:C3curve_parallelogram}
\| f_{[0,1]} \|_{L^4(\R^2)} \lesssim_{\varepsilon} \delta^{-\frac{1}{4}-\varepsilon} \Big( \sum_{J \in P_{\delta}} \|f_J\|_{L^4(\R^2)}^4 \Big)^{1/4}.
\end{equation}
First, we pick positive integer $a$ so that $3^{-a} < \varepsilon/2$, and write $\delta = 2^{-q 3^a -r}$ for some non-negative integers $q$ and $r$ with $r < 3^a$. Then we trivially decouple down to frequency scale $2^{-q}$:
\[
\| f_{[0,1]} \|_{L^4(\R^2)}
\leq 2^q \Big( \sum_{I_0 \in P_{2^{-q}}} \| f_{I_0} \|_{L^4(\R^2)}^4 \Big)^{1/4}.
\]
We are now at frequency scale $\delta_0 := 2^{-q}$ and will successively decouple down to frequency scales $\delta_i := 2^{-q 3^i}$ for $i = 1, 2, \dots, a$. Indeed, for $i = 1, 2, \dots, a$ and $I_{i-1} \in P_{\delta_{i-1}}$, if $I_i \in P_{\delta_i}(I_{i-1})$, then the function $S_{I_{i-1}} f_{I_{i-1}}$ defined by \eqref{eq:fIrescaled} has Fourier transform supported in an $O(\delta_i)$-neighborhood of the parabola over $[0,\delta_{i-1}]$. So we may apply \eqref{eq:decoup_parabola_parallelogram} to decouple $S_{I_{i-1}} f_{I_{i-1}} = \sum_{I_i \in P_{\delta_i}(I_{i-1})} S_{I_{i-1}} f_{I_i}$ and obtain
\[
\|f_{I_{i-1}}\|_{L^4(\R^2)} \lesssim_{\varepsilon} \left( \frac{\delta_i}{\delta_{i-1}} \right)^{-\frac{1}{4}-\frac{\varepsilon}{2}} \Big( \sum_{I_i \in P_{\delta_i}(I_{i-1})} \| f_{I_i} \|_{L^4(\R^2)}^4 \Big)^{1/4}.
\]
Hence
\[
\| f_{[0,1]} \|_{L^4(\R^2)}
\lesssim_{\varepsilon} 2^q \delta_a^{-\frac{1}{4}-\frac{\varepsilon}{2}} \Big( \sum_{I_a \in P_{\delta_a}} \| f_{I_a} \|_{L^4(\R^2)}^4 \Big)^{1/4}.
\]
Note that $2^q \leq \delta^{-\frac{1}{3^a}} \leq \delta^{-\frac{\varepsilon}{2}}$ and $\delta_a^{-1} = 2^{q 3^a} \leq \delta^{-1}$. Finally we trivially decouple from frequency scale $\delta_a = 2^{-q 3^a}$ to frequency scale $\delta = 2^{-q 3^a - r}$: since $2^r \leq 2^{3^a} = O_{\varepsilon}(1)$, we obtain \eqref{eq:C3curve_parallelogram}.
This implies \eqref{small_ball_decoupling_4_general}, in the same way that \eqref{eq:decoup_parabola_parallelogram} implies \eqref{small_ball_decoupling_4}.
\end{proof}

\begin{rem}\label{remark_2_20190607}
The use of $\ell^4$ sum on the right hand side of \eqref{small_ball_decoupling_4} determines that the current new argument that is used to prove Theorem \ref{main}, which is inspired by \cite{Woo16} and \cite{HB15}, cannot be used to recover \eqref{decoupling_l_2}.
\end{rem}

\subsection{The proof of Lemma \ref{lem:4.1}}
We can assume that $a < b$ since when $a = b$, there is nothing to show.
By affine invariance, we may assume that $I' = [0, \nu^b]$. Notice that $|I'|=\nu^{b}$, therefore the function $|\E_{I'} g|^8$
is essentially constant on every axis-parallel slab of dimension $\nu^{-b}\times \nu^{-2b}\times \nu^{-3b}$. Here the short side of
length $\nu^{-b}$ is along the $x_1$-axis and the side of medium length is along the $x_2$-axis.

Since $I' = [0, \nu^b]$, there are absolute constants $c < 1$ and $C > 1$ such that $\cT_{I'}$ contains the axis parallel rectangular
box of dimension $c\nu^{-b} \times c\nu^{-2b} \times c\nu^{-3b}$ centered at the origin and is contained in the axis parallel rectangular
box of dimension $C\nu^{-b} \times C\nu^{-2b} \times C\nu^{-3b}$ centered at the origin.

Let $\Box$ denote an arbitrary axis-parallel rectangular box of dimension $\nu^{-b} \times \nu^{-2b} \times \nu^{-3b}$. To estimate left hand side of \eqref{190305e4.1}, we first consider
\beq\label{190305e4.17}
\int_{\Box} \big( |\E_{I}g|^{4}*\phi_{\cT_I, E} \big) \big( |\E_{I'}g|^{8}* \phi_{\cT_{I'}, E}\big)
\endeq
Notice that for every $x, x'\in \Box$, we have
\begin{align*}
|\E_{I'}g|^{8}* \phi_{\cT_{I'}, E}(x) \sim_E |\E_{I'}g|^{8}* \phi_{\cT_{I'}, E}(x').
\end{align*}
Therefore, we bound \eqref{190305e4.17} by
\begin{align}\label{190505e4.20}
\Big( \sup_{x\in \Box}|\E_{I'}g|^{8}* \phi_{\cT_{I'}, E}(x)\Big) \int_{\Box} \big( |\E_{I}g|^{4}*\phi_{\cT_I, E} \big).
\end{align}
We keep the former factor as is for a while and focus on the latter factor. We first write it as
\beq\label{190505e4.21}
\int_{\R^3} \big[\int_{\Box_y} |\E_I g(x) |^4dx\big] \phi_{\cT_{I, E}}(y) dy,
\endeq
where $\Box_y:=\Box-y$. We will prove the following.

\begin{lemma}\label{l4}
Let $\nu \in 2^{-2^{\mathbb{N}}} \cap (0,1/1000)$, $I = [d, d + \nu^a]$ with $|d| \geq 2\nu$, and $\Delta$ a square in the $(x_2, x_3)$-plane of side length $\nu^{-2b}$. For every fixed $x_1\in \R$, there exists an absolute constant $C_0$ such that
\begin{align*}
 \int_{\R^2}&|(\E_{I}g)(x)|^{4}w_{\Delta, E}(x_2, x_3)\, dx_2\, dx_3\\
& \lsm_{\vep, E} \nu^{-(1 + \vep)(2b - 2a) - C_0}\sum_{J \in P_{\nu^{2b - a}}(I)}\int_{\R^2}|(\E_{J}g)(x)|^{4}w_{\Delta, E}(x_2, x_3)\, dx_2 \, dx_3
\end{align*}
for every $\vep > 0$.
\end{lemma}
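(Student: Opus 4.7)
The plan is to interpret the claim as a 2-dimensional $\ell^4 L^4$-decoupling statement for a piece of the cubic curve $\xi\mapsto(\xi^2,\xi^3)$, and then invoke Lemma~\ref{190209lemma9} in the general form of Remark~\ref{small_ball_decoupling_4_general}. Fixing $x_1$ and setting $g_{x_1}(\xi):=g(\xi)\,e(x_1\xi)$, one has $(\E_I g)(x_1,x_2,x_3)=\int_I g_{x_1}(\xi)\,e(x_2\xi^2+x_3\xi^3)\,d\xi$, so the LHS is the fourth power of the $L^4(w_{\Delta,E})$-norm of the 2D extension operator for the curve $(\xi^2,\xi^3)$ on $I$ applied to $g_{x_1}$. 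To put the problem in normal form I would apply the parabolic-rescaling substitution $\xi=d+\nu^a\eta$ with $\eta\in[0,1]$, combined with the linear spatial change $(x_2,x_3)\mapsto(B,C):=(\nu^{2a}(x_2+3dx_3),\nu^{3a}x_3)$ (whose Jacobian is $\nu^{5a}$). The phase $x_2\xi^2+x_3\xi^3$ then becomes $A\eta+B\eta^2+C\eta^3$ modulo an $\eta$-independent term, where $A=\nu^a(x_1+2dx_2+3d^2x_3)$ is, with $x_1$ frozen, an affine function of $(B,C)$; absorbing $A\eta$ as a modulation of the amplitude identifies the integral with a 2D extension operator for the cubic curve
\[\tilde\gamma(\eta)=\bigl(\eta^2+2d\nu^{-a}\eta,\ \eta^3-3d^2\nu^{-2a}\eta\bigr),\qquad\eta\in[0,1],\]
with curvature numerator $\tilde\gamma_1'\tilde\gamma_2''-\tilde\gamma_1''\tilde\gamma_2'=6(\eta+d\nu^{-a})^2$. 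The partition $\{J\in P_{\nu^{2b-a}}(I)\}$ becomes $\{\tilde J\in P_\delta([0,1])\}$ with $\delta:=\nu^{2b-2a}$. Since $d\ge\nu$ and $\nu^a\le\nu$ (because $a\ge 1$) give $\mu:=d/\nu^a\ge 1$, we have $\eta+\mu\ge 1$ on $[0,1]$, so $\tilde\gamma$ is a nondegenerate $C^2$-curve; a direct computation also shows that the pullback of $w_{\Delta,E}$ is essentially supported on a parallelogram of dimensions $\nu^{2a-2b}\times O(d\nu^{2a-2b})$, which (using $d\le 1$) sits inside a Euclidean ball of radius $\delta^{-1}$ in $(B,C)$.

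I would then apply Remark~\ref{small_ball_decoupling_4_general} to $\tilde\gamma$ at scale $\delta$ on this ball; taking fourth powers produces the factor $\delta^{-1-4\varepsilon}=\nu^{-(2b-2a)(1+4\varepsilon)}$, which after renaming $\varepsilon$ matches the target exponent $\nu^{-(1+\varepsilon)(2b-2a)}$, with the $\nu^{-C_0}$ in the statement absorbing the bookkeeping constants from the changes of variables. Undoing the substitutions, the Jacobian factors (from the spatial change, the amplitude rescaling $\nu^{4a}$, and the weight pullback) cancel between the two sides of the inequality, yielding the claim.

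The main technical obstacle I foresee is \emph{uniformity} of the constant from Remark~\ref{small_ball_decoupling_4_general} in the parameters $(d,\nu,a)$: although the numerator of the curvature of $\tilde\gamma$ is bounded below by an absolute constant, the curvature itself is only of order $1/\mu^4$, since $|\tilde\gamma'|\sim\mu^2$. My remedy would be a further affine rescaling $(B,C)\mapsto(B/\mu^2,C/\mu^3)$ together with the reparametrization $\eta=\mu\tau$, under which $\tilde\gamma$ becomes a subinterval of the standard nondegenerate cubic
\[\hat\gamma(\tau)=\bigl(\tau^2+2\tau,\ \tau^3-3\tau\bigr)\]
on $[0,1/\mu]\subset[0,1]$, whose $C^2$-norm and curvature are bounded above and below by explicit absolute constants. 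Applying Remark~\ref{small_ball_decoupling_4_general} to $\hat\gamma$ and then transporting the output back through the standard parabolic rescaling (Lemma~\ref{parab}) recovers the required absolute decoupling constant, completing the argument. The hypothesis $\nu\in 2^{-2^{\mathbb{N}}}$ is used here only to ensure that the intermediate scales (such as $1/\mu$ and $\nu^{2b-2a}$) remain compatible with the integrality constraints built into the iterative proof underlying Remark~\ref{small_ball_decoupling_4_general}.
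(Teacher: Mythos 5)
Your proposal is correct and follows essentially the same route as the paper: an affine change of variables in $(x_2,x_3)$ turning $\E_I$ into a two-dimensional extension operator for a nondegenerate perturbation of the parabola, followed by the small-ball $\ell^4L^4$ decoupling of Lemma \ref{190209lemma9} via Remark \ref{small_ball_decoupling_4_general}, with the $\nu^{-C_0}$ absorbing the losses from tiling the (eccentric) image of $\Delta$ by canonical balls and from the weight bookkeeping. The only surface difference is the choice of normal form: the paper's matrix $T$ produces the uniformly nondegenerate curve $(t-\tfrac{\nu^{2a}}{3d^2}t^3,\ t^2+\tfrac{2\nu^a}{3d}t^3)$ directly, whereas your $\tilde\gamma$ has curvature of order $\mu^{-4}$; your fix --- passing to $\hat\gamma$ on $[0,1/\mu]$ and renormalizing that subinterval by (two-dimensional) parabolic rescaling --- reproduces exactly the paper's perturbed parabola, so the two arguments coincide up to an affine change of variables.
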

First let's see how to use Lemma \ref{l4} to finish the proof.
By applying Lemma \ref{l4} (with $E$ replaced by $100 E$) and Fubini, we can bound \eqref{190505e4.21} by
\begin{align*}
\begin{split}
& \lsm_{\vep, E} \nu^{-(1 + \vep)(2b - 2a) - C_0}\sum_{J \in P_{\nu^{2b - a}}(I)}\int_{\R^3} \Big[\int_{\R^3}|(\E_{J}g)(x-y)|^{4}w_{\Box, E}(x)dx\Big] \phi_{\cT_{I, E}}(y) dy\\
& \lesim_{\vep, E} \nu^{-(1 + \vep)(2b - 2a) - C_0}\sum_{J \in P_{\nu^{2b - a}}(I)} \int_{\R^3} \big[|\E_J g|^4* \phi_{\cT_{I, E}}\big] w_{\Box, E}.
\end{split}
\end{align*}

In light of Lemma~\ref{eq39}, we have obtained that
\begin{align*}
\begin{split}
 \eqref{190505e4.20} \lesim_{\vep, E} &\nu^{-(1 + \vep)(2b - 2a) - C_0}\times\\
&\Big( \sup_{x\in \Box}|\E_{I'}g|^{8}* \phi_{\cT_{I'}, E}(x)\Big)\sum_{J \in P_{\nu^{2b - a}}(I)} \int_{\R^3} \big[|\E_J g|^4* \phi_{\cT_{J, E/C_0}}\big] w_{\Box, E}.
\end{split}
\end{align*}
It remains to prove that
\begin{align*}
\begin{split}
 \sum_{\Box} \Big( \sup_{x\in \Box}|\E_{I'}g|^{8}* \phi_{\cT_{I'}, E}(x)\Big)& \int_{\R^3} \big[|\E_J g|^4* \phi_{\cT_{J, E/C_0}}\big] w_{\Box,  E} \\
& \lesim_E \int_{\R^3}\big( |\E_{J}g|^{4}*\phi_{\cT_J, E/C_0} \big) \big( |\E_{I'}g|^{8}* \phi_{\cT_{I'}, E/C_0}\big).
\end{split}
\end{align*}
But the proof of this is essentially the same as that in the steps \eqref{190514e3.10}, \eqref{190514e3.11} and \eqref{190514e3.11a}:
one would bound the left hand side by
\[
\begin{split}
\sum_{\Box} \sum_{\kappa \in \Z^3} (1+|\kappa|)^{-E}  \Big( \sup_{x\in \Box}|\E_{I'}g|^{8}* \phi_{\cT_{I'}, E}(x)\Big)  \int_{\Box_{\nu^{-b} \circ \kappa}} \big[|\E_J g|^4* \phi_{\cT_{J, E/C_0}}\big]
\end{split}
\]
where $\nu^{-b} \circ \kappa := (\nu^{-b} \kappa_1, \nu^{-2b} \kappa_2, \nu^{-3b} \kappa_3)$ for $\kappa = (\kappa_1, \kappa_2, \kappa_3)$,
and use the following inequalities: we use
$$
\sup_{x\in \Box_{\nu^{-b} \circ \kappa}} |\E_{J}g|^{4}*\phi_{\cT_J, E/C_0}(x)\lesssim_{E} |\kappa|^{E/C_0} \inf_{x'\in \Box} |\E_{J}g|^{4}*\phi_{\cT_J, E/C_0}(x'),
$$
(here we used that the side lengths of $\cT_J$ are longer than those of $\Box$, which holds because $b \geq a$), and that
$$
\sup_{x \in \Box}|\E_{I'}g|^{8} \ast \phi_{\cT_{I'}, E}(x) \sim_{E} \inf_{x \in \Box}|\E_{I'}g|^{8} \ast \phi_{\cT_{I'}, E}(x).
$$

\subsection{The proof of Lemma \ref{l4}}
In the proof, to avoid using too many subscripts, we will use $(x, y, z)$ to stand for a point in $\R^3$ rather than $(x_1, x_2, x_3)$.

\begin{proof}[Proof of Lemma \ref{l4}]
By the same reasoning as in Lemma 2.5 and Proposition 2.6 of \cite{Li17}, it suffices to prove instead
\begin{align}\label{l4main}
\begin{aligned}
& \int_{\R^2}|(\E_{I}g)(x, y, z)|^{4}\mathbbm{1}_{\Delta}(y, z)\, dy\, dz\\
& \lsm_{\vep, E} \nu^{-(1 + \vep)(2b - 2a) - C_0}\sum_{J \in P_{\nu^{2b - a}}(I)}\int_{\R^2}|(\E_{J}g)(x, y, z)|^{4}\wt{w}_{\Delta, 10E}(y, z)\, dy\, dz
\end{aligned}
\end{align}
and furthermore, by shifting $y$ and $z$, it suffices to show this only in the case when $\Delta$ is centered at the origin.
Let $\ov{\Delta}$ be the square centered at the origin with coordinates $(\nu^{-2b}, 0)$, $(-\nu^{-2b}, 0)$, $(0, \nu^{-2b})$, and
$(0, -\nu^{-2b})$. Since $\Delta \subset \ov{\Delta}$, it suffices to show \eqref{l4main} with $\mathbbm{1}_{\Delta}(y, z)$ on the left hand
side replaced with $\mathbbm{1}_{\ov{\Delta}}(y, z)$. This small reduction will make the algebra later simpler.

Expanding the left hand side gives
\begin{align*}
\int_{\R^2}|\int_{d}^{d + \nu^a}g(t)e(tx)e(t^{2}y + t^{3}z)\, dt|^{4}\mathbbm{1}_{\ov{\Delta}}(y, z)\, dy\, dz.
\end{align*}
Rescaling $[d, d + \nu^a]$ to $[0, 1]$ shows that the above is equal to
\begin{equation}\label{e4.24}
\begin{aligned}
\nu^{4a}\int_{\R^2}|\int_{0}^{1}&g(d + \nu^{a}t)e((d + \nu^a t)x)\times\\
&e(\nu^{a}t(2dy + 3d^{2}z) + \nu^{2a}t^{2}(y + (3d + \nu^{a}t)z))\, dt|^{4}\mathbbm{1}_{\ov{\Delta}}(y, z)\, dy\, dz.
\end{aligned}
\end{equation}
Before we proceed, let us first describe the idea. It will become clear why we organize different terms in the phase function
as above. Notice that $\nu^{a}$ is an extremely small number. We will treat $3d + \nu^{a}t$ as a small perturbation of $3d$ and end up looking at the extension
operator for a (perturbed) parabola.

To make this idea precise, we make the change of variables
\begin{align*}
\begin{pmatrix}
y'\\ z'
\end{pmatrix} =
\begin{pmatrix}
2\nu^{a}d & 3\nu^{a}d^{2}\\\nu^{2a} & 3\nu^{2a}d
\end{pmatrix}
\begin{pmatrix}
y\\z
\end{pmatrix}.
\end{align*}
Denote the matrix above by $T$ and let $G(t, x) := g(d + \nu^{a}t)e((d + \nu^a t)x)$. Then using that $|d| \geq 2\nu$, \eqref{e4.24} is bounded by
\begin{align}\label{l4eq1}
\nu^{a - 2}\int_{\R^2}|\int_{0}^{1}G(t, x)e(y'(t - \frac{\nu^{2a}}{3d^{2}}t^3) + z'(t^{2} + \frac{2\nu^{a}}{3d}t^3))\, dt|^{4}\mathbbm{1}_{T(\ov{\Delta})}(y', z')\, dy'\, dz'.
\end{align}
Ignoring the weight for the moment, we will now want a decoupling theorem for the curve $\gamma(t) = (t - \frac{\nu^{2a}}{3d^2}t^3, t^{2} + \frac{2\nu^{a}}{3d}t^3)$ which is a small
perturbation of the parabola. But this comes from Lemma~\ref{lem:small_ball_decoupling_4_general}: indeed,
\[
|\gamma'(t) \wedge \gamma''(t)| = \left| \det \left(
\begin{array}{cc}
1 - \frac{\nu^{2a}}{d^2} t^2 & -2 \frac{\nu^{2a}}{d^2} t  \\
2t + \frac{2\nu^a}{d}t^2 & 2 + \frac{4 \nu^a}{d} t
\end{array}
\right) \right|
= \left(1 + \frac{\nu^a}{d} t \right)^2
\]
which is obviously $\geq 1$ for $t \in [0,1]$ if $d \geq 2\nu$; if on the other hand $d \leq -2\nu$, then for $t \in [0,1]$, $1 + \frac{\nu^a}{d} t \geq 1 - \frac{\nu^a}{2\nu} \geq \frac{1}{2}$ so $|\gamma'(t) \wedge \gamma''(t)| \geq \frac{1}{4}$ and Lemma~\ref{lem:small_ball_decoupling_4_general} will apply.

Note that $T(\ov{\Delta})$ is the parallelogram centered at the origin with vertices at the points
\begin{align*}
A: &(2d\nu^{-2b + a}, \nu^{-2b + 2a}),\\
B: &(-2d\nu^{-2b + a}, -\nu^{-2b + 2a}),\\
C: &(3d^{2}\nu^{-2b + a}, 3d\nu^{-2b + 2a}),\\
D: &(-3d^{2}\nu^{-2b + a}, -3d\nu^{-2b + 2a}).
\end{align*}
We have two cases: either $2\nu \leq |d| \leq 1/1000$ or $1/1000 < |d| \leq 1$. We will only focus on the former case. The latter case is slightly easier, as we have $O(1)$ separation.
We split the former case into two further cases $d > 0$ and $d < 0$. Again we only focus on the former case $d>0$. The proof for the other case is similar. \\

We will want to cover $T(\ov{\Delta})$ (a rotated thin parallelogram) by squares roughly of
side length $\nu^{-2b + 2a - 1}$. To simplify working with the weight functions adapted
to each of these squares we rotate this parallelogram so that the longest diagonal is on the
$y'$-axis and then we cover this rotated parallelogram with axis-parallel squares.

Since $d$ is sufficiently small, the longest diagonal is created by connecting the points $A$ and $B$ which lies on the line
$z' = \frac{\nu^a}{2d}y'$. Let $\ta$ be such that $\tan\ta = \frac{\nu^{a}}{2d}$ and let $R_{\ta}$ be the rotation matrix
that rotates by an angle $\ta$ in the counterclockwise direction. Therefore $R_{\ta}^{-1}T(\ov{\Delta})$ is a parallelogram
with the line connecting $R_{\ta}^{-1}A$ and $R_{\ta}^{-1}B$ on the $y'$-axis.
The $y'$-coordinate of $R_{\ta}^{-1}A$ is $$\nu^{-2b + a}(2d\cos\ta + \nu^{a}\sin\ta).$$
and the $z'$-coordinate of $R_{\ta}^{-1}C$ is
$$3d\nu^{-2b + a}(-d\sin\ta + \nu^{a}\cos\ta).$$

We can find an integer $N$ such that $N \mid \nu^{-2b + 2a}$ and $\frac{1}{2N} \leq d \leq \frac{1}{N}$. Indeed write $\nu = 2^{-2^{\alpha}}$
for some $\alpha \in \N$ sufficiently large. Since we want $N \leq \frac{1}{d} \leq 2N$ and know $1000 \leq \frac{1}{d} \leq \frac{1}{2}\nu^{-1}$,
choose $N$ from the set $\{2^9, 2^{10}, \ldots, 2^{2^{\alpha} - 2}\}$. Then $N \mid \nu^{-1}$ and since $2b - 2a \geq 2$, $N \mid \nu^{-2b + 2a}$.

Therefore $R_{\ta}^{-1}T(\ov{\Delta})$ is contained in a rectangle $\Delta'$ centered at the origin of length
\begin{align*}
2\nu^{-2b + a}(2d\cos\ta + \nu^{a}\sin\ta) = 2\nu^{-2b + a}\sqrt{4d^{2} + \nu^{2a}} \leq 6d\nu^{-2b + a} \leq \frac{6}{N}\nu^{-2b + a}
\end{align*}
and height
\begin{align*}
6d\nu^{-2b + a}(-d\sin\ta + \nu^{a}\cos\ta) = 3d\nu^{-2b + 2a}\frac{2d}{\sqrt{4d^{2} + \nu^{2a}}} \leq 6d\nu^{-2b + 2a} \leq \frac{6}{N}\nu^{-2b + 2a}.
\end{align*}
Partition this rectangle into $\nu^{-a}$ squares $\{\Box\}$ of side length $\frac{1}{N}\nu^{-2b + 2a}$.
Thus in this case we have shown that
\begin{align*}
\mathbbm{1}_{T(\ov{\Delta})}(y', z') &\leq \sum_{\Box}\mathbbm{1}_{\Box}(R_{\ta}^{-1}(y', z'))\\
 &\lsm_{E} \sum_{\Box}w_{\Box, 100E}(R_{\ta}^{-1}(y', z'))= \sum_{\Box}w_{B(0, \frac{1}{N}\nu^{-2b + 2a}), 100E}((y', z') - R_{\ta}c_{\Box}).
\end{align*}
where the last equality we have used that $w_{B(0, R), 100E}(x)$ is a radial function.
Therefore \eqref{l4eq1} is
\begin{align*}
\lsm_{E} \nu^{a - 2}\sum_{\Box}\int_{\R^2}|\int_{0}^{1}G(t, x)e(y'(t - \frac{\nu^{2a}}{3d^2}t^3) + &z'(t^2 + \frac{2\nu^a}{3d}t^3))\, dt|^{4}\times\\
&w_{B(R_{\ta}c_{\Box}, \frac{1}{N}\nu^{-2b + 2a}), 100E}(y', z')\, dy'\, dz'.
\end{align*}
Since $N \mid \nu^{-2b + 2a}$ and $N\nu^{2b - 2a} \in 2^{-\N}$, applying Lemma \ref{lem:small_ball_decoupling_4_general} shows that we can decouple to frequency scale $N\nu^{2b - 2a}$.
That is, the above is
\begin{align*}
\lsm_{\vep, E} &\frac{1}{N^{1 + \vep}}\nu^{-(2b - 2a)(1 + \vep) - 2}\times\\
&\sum_{\Box}\sum_{J \in P_{N\nu^{2b - 2a}}}\nu^{a}\int_{\R^2}|\int_{J}G(t, x)e(y'(t - \frac{\nu^{2a}}{3d^2}t^3) + z'(t^2 + \frac{2\nu^{a}}{3d}t^3))\, dt|^{4}\times\\
&\hspace{2.4in}w_{B(R_{\ta}c_{\Box}, \frac{1}{N}\nu^{-2b + 2a}), 100E}(y', z')\, dy'\, dz'.
\end{align*}
By undoing the change of variables, one controls
the above by
\begin{align*}
\lsm_{\vep, E}&\frac{d^2}{N^{1 + \vep}}\nu^{-(2b - 2a)(1 + \vep) - 2}\times\\
&\sum_{J \in P_{N\nu^{2b - a}}(I)}\int_{\R^2}|(\E_{J}g)(x, y, z)|^{4}(\sum_{\Box}w_{B(R_{\ta}c_{\Box}, \frac{1}{N}\nu^{-2b + 2a}), 100E}(T(y, z)))\, dy\, dz.
\end{align*}
Since $N \sim d^{-1}$, we can use the triangle inequality to decouple $I$ from frequency
scale $N\nu^{2b - a}$ to scale $\nu^{2b - a}$, losing only a factor of $O(N^3)$. Therefore the above is
\begin{align*}
\lsm_{\vep, E}&\nu^{-(2b - 2a)(1 + \vep) - 2}\times\\
&\sum_{J \in P_{\nu^{2b - a}}(I)}\int_{\R^2}|(\E_{J}g)(x, y, z)|^{4}(\sum_{\Box}w_{B(R_{\ta}c_{\Box}, \frac{1}{N}\nu^{-2b + 2a}), 100E}(T(y, z)))\, dy\, dz.
\end{align*}
Thus we will have proved \eqref{l4main} in the case when $2\nu \leq d \leq 1/1000$ provided we can show that
\begin{align}\label{weightmain}
\sum_{\Box}w_{B(R_{\ta}c_{\Box}, \frac{1}{N}\nu^{-2b + 2a}), 100E}(T(y, z)) \lsm_{E} \wt{w}_{\Delta, 10E}(y, z).
\end{align}
We have
\begin{align*}
\sum_{\Box}w_{B(R_{\ta}c_{\Box}, \frac{1}{N}\nu^{-2b + 2a}), 100E}(T(y, z)) = \sum_{\Box}w_{\Box, 100E}(R_{\ta}^{-1}T(y, z)).
\end{align*}
Since $(1 + |y|)(1 + |z|) \leq (1 + |(y, z)|)^2$, to show \eqref{weightmain}, it suffices to show that
\begin{align}\label{weightmain2}
\sum_{\Box}\wt{w}_{\Box, 50E}(R_{\ta}^{-1}T(y, z)) \lsm_{E} \wt{w}_{\Delta, 10E}(y, z).
\end{align}
Writing the centers of the $\Box$ that partition $\Delta'$ as $(c_{\Box, 1}, c_{\Box, 2})$, we have
\begin{align*}
\sum_{\Box}\mathbbm{1}_{I(c_{\Box, 1}, \frac{1}{N}\nu^{-2b + 2a})}(y)\mathbbm{1}_{I(c_{\Box, 2}, \frac{1}{N}\nu^{-2b + 2a})}(z) = \mathbbm{1}_{I(0, \frac{6}{N}\nu^{-2b + a})}(y)\mathbbm{1}_{I(0, \frac{6}{N}\nu^{-2b + 2a})}(z)
\end{align*}
where $I(a, L)$ is the interval $[a - L/2, a + L/2]$.
By the proof of Lemma 2.1 and Remark 2.2 of \cite{Li17},
\begin{align*}
(\mathbbm{1}_{I(c_{\Box, 1}, \frac{1}{N}\nu^{-2b + 2a})} \ast w_{I(0, \frac{1}{N}\nu^{-2b + 2a}), 50E})(y) \gtrsim_{E} (\frac{1}{N}\nu^{-2b + 2a})w_{I(c_{\Box, 1}, \frac{1}{N}\nu^{-2b + 2a}), 50E}(y)
\end{align*}
and similarly for the $z$-coordinate, the left hand side of \eqref{weightmain2} is
\begin{align*}
\lsm_{E} &(\frac{1}{N}\nu^{-2b + 2a})^{-2}\times\\
&(\mathbbm{1}_{I(0, \frac{6}{N}\nu^{-2b + a})}^{y}\mathbbm{1}_{I(0, \frac{6}{N}\nu^{-2b + 2a})}^{z} \ast w_{I(0, \frac{1}{N}\nu^{-2b + 2a}), 50E}^{y}w_{I(0, \frac{1}{N}\nu^{-2b + 2a}), 50E}^{z})(R_{\ta}^{-1}T(y, z))
\end{align*}
where here we have used $\mathbbm{1}_{I(0,R)}^{y}$ to be shorthand for $\mathbbm{1}_{I(0, R)}(y)$ and similarly for $\mathbbm{1}_{I(0, R)}^{z}$, $w_{I(0, R)}^{y}$, and $w_{I(0, R)}^{z}$.
Thus it suffices to show that
\begin{align*}
(w_{I(0, \frac{1}{N}\nu^{-2b + a}), 50E}^{y}w_{I(0, \frac{1}{N}\nu^{-2b + 2a}), 50E}^{z})(R_{\ta}^{-1}T(y, z)) \lsm_{E} \wt{w}_{\Delta, 10E}(y, z).
\end{align*}
Rescaling $y$ and $z$, it is enough to prove
\begin{align}\label{weightmain3}
(w_{I(0, \frac{1}{N}\nu^{-2b + a}), 50E}^{y}w_{I(0, \frac{1}{N}\nu^{-2b + 2a}), 50E}^{z})(\nu^{-2b}R_{\ta}^{-1}T(y, z)) \lsm_{E} \wt{w}_{B(0, 1), 10E}(y, z).
\end{align}
The left hand side of \eqref{weightmain3} is equal to
\begin{align*}
(1 + |(N\nu^{-a}R_{\ta}^{-1}T(y, z))_{1}|)^{-50E}(1 + |(N\nu^{-2a}R_{\ta}^{-1}T(y, z))_{2}|)^{-50E}.
\end{align*}
We observe that
\begin{align*}
N\nu^{-2a}R_{\ta}^{-1}T &= Nd\cdot \frac{2d}{\sqrt{4d^2 + \nu^{2a}}}\begin{pmatrix}
\nu^{-a}(2 + \frac{\nu^{2a}}{2d^2}) & \nu^{-a}(3d + \frac{3}{2}\nu^{2a}d^{-1})\\
0 & 3/2
\end{pmatrix}\\
& := Nd\cdot \frac{2d}{\sqrt{4d^2 + \nu^{2a}}} S
\end{align*}
and
\begin{align}\label{ndest}
\frac{1}{\sqrt{5}}\leq Nd\cdot \frac{2}{\sqrt{5}} \leq Nd \cdot \frac{2d}{\sqrt{4d^2 + \nu^{2a}}} \leq Nd \leq 1.
\end{align}
Therefore from \eqref{ndest} and that $2 \leq 2 + \frac{\nu^{2a}}{2d^2} \leq 3$,
\begin{align*}
(1 + |(N\nu^{-a}R_{\ta}^{-1}T(y, z))_{1}|)^{-50E} &\lsm_{E} (1 + |(2 + \frac{\nu^{2a}}{2d^2})y + (3d + \frac{3}{2}\frac{\nu^{2a}}{d})z|)^{-50E}\\
&\lsm_{E} (1 + |y + d(\frac{6d^2 + 3\nu^{2a}}{4d^{2} + \nu^{2a}})z|)^{-50E}
\end{align*}
and
\begin{align*}
(1 + |(N\nu^{-2a}R_{\ta}^{-1}T(y, z))_{2}|)^{-50E} \lsm_{E} (1 + |z|)^{-50E}.
\end{align*}
Thus to prove \eqref{weightmain3}, it remains to show that
\begin{align}\label{weightbd}
\frac{(1 + |y|)}{(1 + |y + d(\frac{6d^2 + 3\nu^{2a}}{4d^{2} + \nu^{2a}})z|)^{5}(1 + |z|)^{4}}
\end{align}
is a bounded function independent of $y, z, d, \nu$, and $a$.
To see that \eqref{weightbd} is bounded, we consider the following two cases:
\begin{itemize}
\item Suppose $|y +d(\frac{6d^2 + 3\nu^{2a}}{4d^{2} + \nu^{2a}})z| \geq \frac{|y|}{2}$. Then \eqref{weightbd} is controlled by
\begin{align*}
\frac{1 + |y|}{(1 + |y|/2)^{5}(1 + |z|)^{4}} \lsm 1.
\end{align*}
\item Suppose $|y +d(\frac{6d^2 + 3\nu^{2a}}{4d^{2} + \nu^{2a}})z| < \frac{|y|}{2}$. Then $|y| \leq 2|d\frac{6d^2 + 3\nu^{2a}}{4d^{2} + \nu^{2a}}||z| \leq |z|$
and hence \eqref{weightbd} is controlled by
\begin{align*}
\frac{1}{(1 + |z|)^{3}} \lsm  1.
\end{align*}
\end{itemize}
This then proves \eqref{weightmain3} and hence also \eqref{l4main} in the case when $\nu \leq d \leq 1/1000$.
\end{proof}

\section{The iteration}\label{last_section}

We now let $C_0$ be the largest of any $C_0$ that appears in the statements of Lemmas \ref{lem3}-\ref{cor_small_ball_bi} in Section \ref{properties}.
It will no longer vary line by line as before and will now be fixed.
\begin{lemma}\label{combine}
	Let $a$ and $b$ be integers such that $1 \leq a \leq b$. Suppose $\delta$ and $\nu$ were such that $\nu^{3b}\delta^{-1} \in \N$
    and $\nu \in 2^{-2^{\mathbb{N}}} \cap (0,1/1000)$.
	Then
	\begin{align*}
	\mc{M}_{2, a, b}(\delta, \nu, E) \lsm_{\vep, E} &\nu^{\frac{1}{36}(5 + 6\vep)a - \frac{1}{36}(7 + 6\vep)b - \frac{5}{3}C_0}\times\\
&\mc{M}_{2, b, 2b - a}(\delta, \nu, E/C_0^{4})^{1/3}\mc{M}_{2, b, 3b}(\delta, \nu, E/C_0^{4})^{1/6}D(\frac{\delta}{\nu^b})^{1/2}.
	\end{align*}
\end{lemma}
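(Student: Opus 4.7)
The plan is to chain together the four main lemmas of Section \ref{properties} in an order dictated by which hypotheses are available at each step, so that the two $\mc{M}_2$ factors $\mc{M}_{2,b,2b-a}$ and $\mc{M}_{2,b,3b}$ on the right hand side emerge naturally. The broad strategy is forced by the admissible index ranges: Lemma \ref{cor_small_ball_bi} requires the first index to be at most the second, while Lemma \ref{bilinear1} requires the first to be at most three times the second, so the indices must be flipped by Lemma \ref{lem3} (and eventually Lemma \ref{lem4}) at appropriate moments.

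Concretely, I would first apply Lemma \ref{cor_small_ball_bi} to $\mc{M}_{2,a,b}$, valid since $1 \leq a \leq b$, to reduce to $\mc{M}_{2, 2b-a, b}$ at a cost of $\nu^{-\frac{1}{6}(1+\vep)(b-a) - C_0}$. Since the first index $2b-a$ now exceeds the second index $b$, I apply Lemma \ref{lem3} to produce
$$
\mc{M}_{2,2b-a,b} \lsm_E \mc{M}_{2,b,2b-a}^{1/3} \, \mc{M}_{1,2b-a,b}^{2/3},
$$
which already delivers the first desired factor $\mc{M}_{2,b,2b-a}^{1/3}$. The remaining $\mc{M}_{1,2b-a,b}$ is handled by Lemma \ref{bilinear1}, applicable because $2b-a \leq 3b$, which contributes $\nu^{-\frac{1}{24}(a+b) - C_0}$ and reduces to $\mc{M}_{1,3b,b}$. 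A final application of Lemma \ref{lem4} gives $\mc{M}_{1,3b,b} \lsm_E \mc{M}_{2,b,3b}^{1/4} \, D(\delta/\nu^b)^{3/4}$, which after being raised to the $2/3$ power accumulated from the Lemma \ref{lem3} step produces exactly the remaining $\mc{M}_{2,b,3b}^{1/6}$ and $D(\delta/\nu^b)^{1/2}$.

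Collecting the $\nu$-exponents gives
$$
-\tfrac{1}{6}(1+\vep)(b-a) \; - \; \tfrac{2}{3}\cdot\tfrac{1}{24}(a+b) \; - \; \tfrac{5}{3}C_0,
$$
which rearranges to $\tfrac{1}{36}(5+6\vep)a - \tfrac{1}{36}(7+6\vep)b - \tfrac{5}{3}C_0$, matching the claim. Each invocation costs one factor of $C_0$ in the weight parameter $E$, so after the four steps one naturally arrives at $E/C_0^4$ in the deepest $\mc{M}_2$ factor; the same value is then used uniformly in the statement, which is harmless since the bound is monotone in $E$.

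I do not expect a genuine obstacle here, since the four lemmas were designed precisely to interlock in this manner. The only routine care required is to verify, at each step, that the pair of frequency indices lies in the admissible range of the lemma being applied (in particular $1 \le 2b-a$ and $2b-a \le 3b$, both of which follow trivially from $1 \le a \le b$), and to check the integrality hypotheses $\nu^{2b-a}\delta^{-1}, \nu^{b}\delta^{-1}, \nu^{3b}\delta^{-1} \in \N$; these all follow from the assumption $\nu^{3b}\delta^{-1} \in \N$ together with $\nu^{-1} \in \N$, the latter being a consequence of $\nu \in 2^{-2^{\N}}$.
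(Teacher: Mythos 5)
Your proposal is correct and follows essentially the same route as the paper: apply Lemma \ref{cor_small_ball_bi}, then Lemma \ref{lem3}, then Lemma \ref{bilinear1} to the resulting $\mc{M}_{1,2b-a,b}$ factor, and finally Lemma \ref{lem4}, with identical exponent bookkeeping $-\tfrac{1}{6}(1+\vep)(b-a)-\tfrac{1}{36}(a+b)=\tfrac{1}{36}(5+6\vep)a-\tfrac{1}{36}(7+6\vep)b$. The index-range and integrality checks you note are exactly the ones needed, so there is nothing further to add.
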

\begin{proof}
	We have
	\begin{align}\label{cmbeq1}
	&\mc{M}_{2, a, b}(\delta, \nu, E)\lsm_{\vep, E} \nu^{-\frac{1}{6}(1 + \vep)(b - a)- C_0}\mc{M}_{2, 2b - a, b}(\delta, \nu, E/C_0)\nonumber\\
	&\quad\lsm_{\vep, E} \nu^{-\frac{1}{6}(1 + \vep)(b - a) - C_0} \mc{M}_{2, b, 2b - a}(\delta, \nu, E/C^2_0)^{1/3}\mc{M}_{1, 2b - a, b}(\delta, \nu, E/C^2_0)^{2/3}
	\end{align}
	where here we have used Lemmas \ref{lem3} and \ref{cor_small_ball_bi}. Next, Lemmas \ref{lem4} and \ref{bilinear1} give that
	\begin{align*}
	\mc{M}_{1, 2b - a, b}(\delta, \nu, E/C^2_0) &\lsm_{E} \nu^{-\frac{1}{24}(a + b) - C_0}\mc{M}_{1, 3b, b}(\delta, \nu, E/C_0^3)\\
	&\lsm_{E} \nu^{-\frac{1}{24}(a + b) - C_0}\mc{M}_{2, b, 3b}(\delta, \nu, E/C_0^4)^{1/4}D(\frac{\delta}{\nu^{b}})^{3/4}.
	\end{align*}
	Inserting this estimate into \eqref{cmbeq1} and observing that
	\begin{align*}
    -\frac{1}{6}(1 + \vep)( b - a) - \frac{1}{36}(a + b) = \frac{1}{36}(5 + 6\vep)a - \frac{1}{36}(7 + 6\vep)b
	\end{align*}
	then completes the proof of Lemma \ref{combine}.
\end{proof}

Let $\ld \geq 0$ be the smallest real number such that $D(\delta) \lsm_{\vep} \delta^{-1/4 - \ld - \vep}$
for all $\delta \in \N^{-1}$. The trivial bound on $D(\delta)$ shows that $\ld \leq 1/2$.
If $\ld = 0$, then we are done. We now assume $\ld > 0$ and derive a contradiction.

We will let $\wt{C}(\vep)$ be the implied constant depending on $\vep$ in the estimate $D(\delta) \lsm_{\vep} \delta^{-1/4 - \ld - \vep}$
and $C(\vep, E)$ the implied constant depending on $\vep, E$ from Lemma \ref{combine}.
\begin{lemma}\label{iter}
	Let $N \geq 0$ an integer and $\delta \in \N^{-1}$ be fixed.
	
	Suppose the following statement is true: If $b \in \N$ and $\nu \in 2^{-2^{\N}} \cap (0, 1/1000)$ is such that $\nu^{3^{N}b}\delta^{-1} \in \N$, then
	$$\mc{M}_{2, a, b}(\delta, \nu, E) \leq C_{N}(a, b, \vep, E) \delta^{-\frac{1}{4} - \ld- \vep}\nu^{-\alpha_{N}a - \beta_{N}b - \frac{10}{3}C_0}$$
	for all $a$ such that $1 \leq a \leq b$.
	
	Then the following statement is also true: If $b \in \N$ and $\nu \in 2^{-2^{\N}}\cap (0, 1/1000)$ is such that $\nu^{3^{N + 1}b}\delta^{-1} \in \N$, then
	$$\mc{M}_{2, a, b}(\delta, \nu, E) \leq C_{N + 1}(a, b, \vep, E) \delta^{-\frac{1}{4}-\ld - \vep}\nu^{-\alpha_{N + 1}a - \beta_{N + 1}b- \frac{10}{3}C_0}$$
	for all $a$ such that $1 \leq a \leq b$ where
	\begin{align*}
	\begin{pmatrix}
	\alpha_{N + 1}\\
	\beta_{N + 1}
	\end{pmatrix} =
	\begin{pmatrix}
	-5/36\\
	5/72 - \ld/2
	\end{pmatrix} +
	\begin{pmatrix}
	0 & -1/3\\
	1/2 & 7/6
	\end{pmatrix}
	\begin{pmatrix}
	\alpha_N\\
	\beta_N
	\end{pmatrix}
	\end{align*}
    and
    $$C_{N + 1}(a, b, \vep, E) := C(\vep, E)C_{N}(b, 2b - a, \vep, E/C_0^{4})^{1/3}C_{N}(b, 3b, \vep, E/C_0^{4})^{1/6}\wt{C}(\vep)^{1/2}.$$
\end{lemma}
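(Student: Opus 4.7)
The plan is essentially pure substitution: apply Lemma \ref{combine} to $\mc{M}_{2,a,b}(\delta,\nu,E)$, then bound each of the two $\mc{M}_{2,\cdot,\cdot}(\delta,\nu,E/C_0^4)$ factors appearing on its right by the inductive hypothesis, and bound $D(\delta/\nu^b)$ by $\wt{C}(\vep)(\delta/\nu^b)^{-1/4-\ld-\vep}$ using the definition of $\ld$. The only content is the arithmetic of matching powers of $\delta$, $\nu^a$, $\nu^b$, and $\nu^{C_0}$, and tracking the $E$-dependence through $C_0^4$.

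Before substituting, I would check the integrality and range conditions. To apply Lemma \ref{combine} requires $\nu^{3b}\delta^{-1}\in\N$. To apply the inductive hypothesis to $\mc{M}_{2,b,2b-a}$ requires $b\le 2b-a$ (which holds since $a\le b$) and $\nu^{3^N(2b-a)}\delta^{-1}\in\N$. To apply it to $\mc{M}_{2,b,3b}$ requires $\nu^{3^N\cdot 3b}\delta^{-1}\in\N$. Since $\nu^{-1}\in\N$ and each of $3b$, $3^N(2b-a)$, $3^N\cdot 3b$ is an integer in $[0,3^{N+1}b]$, the assumption $\nu^{3^{N+1}b}\delta^{-1}\in\N$ gives all three for free.

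Then I would combine. The $\delta$-exponents sum to $\delta^{-\frac{1}{4}-\ld-\vep}$, using $\frac{1}{12}+\frac{1}{24}+\frac{1}{8}=\frac{1}{4}$; the $C_0$-contributions to the $\nu$-exponent sum to $-\frac{10}{3}C_0$, using $\frac{5}{3}+\frac{10}{9}+\frac{5}{9}=\frac{10}{3}$; and the implied constants multiply into exactly $C_{N+1}(a,b,\vep,E)=C(\vep,E)\,C_N(b,2b-a,\vep,E/C_0^4)^{1/3}\,C_N(b,3b,\vep,E/C_0^4)^{1/6}\,\wt{C}(\vep)^{1/2}$. The coefficient of $a$ in the remaining $\nu$-exponent comes out to $\frac{5+6\vep}{36}+\frac{\beta_N}{3}$, which equals $-\alpha_{N+1}+\frac{\vep}{6}$ by the recursion $\alpha_{N+1}=-\frac{5}{36}-\frac{\beta_N}{3}$. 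The coefficient of $b$ equals $-\frac{7+6\vep}{36}-\frac{\alpha_N}{2}-\frac{7\beta_N}{6}+\frac{1}{8}+\frac{\ld}{2}+\frac{\vep}{2}$ (where $-\frac{\alpha_N}{2}-\frac{7\beta_N}{6}$ comes from merging $-\frac{\alpha_N}{3}-\frac{2\beta_N}{3}$ from the $1/3$-power and $-\frac{\alpha_N}{6}-\frac{\beta_N}{2}$ from the $1/6$-power, and the $\frac{1}{8}+\frac{\ld}{2}+\frac{\vep}{2}$ comes from $D(\delta/\nu^b)^{1/2}$); after using $-\frac{7}{36}+\frac{1}{8}=-\frac{5}{72}$ this simplifies to $-\beta_{N+1}+\frac{\vep}{3}$ via $\beta_{N+1}=\frac{5}{72}-\frac{\ld}{2}+\frac{\alpha_N}{2}+\frac{7\beta_N}{6}$. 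Since $\nu<1$ and the residual $\vep$-corrections $\vep a/6+\vep b/3$ are positive, the factor $\nu^{\vep a/6+\vep b/3}\le 1$ may be discarded, giving the claimed bound.

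The main obstacle is just the bookkeeping; there is no substantive mathematical difficulty. The one subtle point to verify is the sign of the residual $\vep$-correction in the $\nu$-exponent, which has to come out nonnegative so that it can be absorbed — it does, because both $a$ and $b$ are positive and the $\vep$-coefficients produced by Lemma \ref{combine} (positive on $a$, negative on $b$) combine with the $\vep$-loss $\vep/2$ from $D(\delta/\nu^b)^{1/2}$ to leave $+\vep/6$ on $a$ and $+\vep/3$ on $b$.
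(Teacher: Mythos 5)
Your proposal is correct and follows exactly the paper's own proof: apply Lemma \ref{combine}, substitute the inductive hypothesis (with $E/C_0^4$) for $\mc{M}_{2,b,2b-a}$ and $\mc{M}_{2,b,3b}$ and the definition of $\ld$ for $D(\delta/\nu^b)$, verify the integrality/range conditions as you do, and match exponents via the stated recursion. The arithmetic you report (the $\delta$-exponent, the $-\tfrac{10}{3}C_0$ total, and the residual $\nu^{\vep a/6+\vep b/3}\le 1$) agrees with the paper's computation.
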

\begin{proof}
	Fix $a$ and $b$ such that $1 \leq a \leq b$ and let $\nu \in 2^{-2^{\N}} \cap (0, 1/1000)$ be such that $\nu^{3^{N + 1}b}\delta^{-1} \in \N$.
	Then $1 \leq b \leq 2b - a$ and $$\nu^{3^{N}(2b - a)}\delta^{-1} = \nu^{3^{N + 1}b}\delta^{-1}\nu^{-3^{N}(b + a)} \in \N.$$
	Therefore by hypothesis,
	\begin{align*}
	\mc{M}_{2, b, 2b - a}(\delta, \nu, E) \leq C_{N}(b, 2b - a, \vep, E)\delta^{-\frac{1}{4} - \ld - \vep}\nu^{\beta_{N}a - (\alpha_N + 2\beta_N)b -\frac{10}{3}C_0}
	\end{align*}
	Next, since $\nu^{3^{N + 1}b}\delta^{-1} = \nu^{3^{N}(3b)}\delta^{-1} \in \N$ and $1 \leq b \leq 3b$, by hypothesis, we have
	\begin{align*}
	\mc{M}_{2, b, 3b}(\delta, \nu, E) \leq C_{N}(b, 3b, \vep, E)\delta^{-\frac{1}{4} - \ld - \vep}\nu^{-(\alpha_N + 3\beta_N)b - \frac{10}{3}C_0}.
	\end{align*}
	Finally we note that by our assumption on $D(\delta)$ and since $\nu^{3^{N + 1}b}\delta^{-1} \in \N$ implies $\nu^{b}\delta^{-1} \in \N$, we have
	\begin{align*}
	D(\frac{\delta}{\nu^{b}}) \leq \wt{C}(\vep) \delta^{-\frac{1}{4} - \ld - \vep}\nu^{b(\frac{1}{4} + \ld)}\nu^{b\vep}.
	\end{align*}
	Since $\nu^{3b}\delta^{-1} \in \N$, Lemma \ref{combine} then gives that
	\begin{align*}
	\mc{M}_{2, a, b}(\delta, \nu, E) &\leq C_{N + 1}(a, b, \vep, E) \delta^{-\frac{1}{4}- \ld- \vep}\times\\
&\quad\nu^{\frac{1}{36}(5 + 6\vep)a - \frac{1}{36}(7 + 6\vep)b - \frac{10}{3}C_0}\nu^{\frac{1}{3}\beta_{N}a - (\frac{1}{2}\alpha_N + \frac{7}{6}\beta_{N})b + \frac{1}{2}b(\frac{1}{4} + \ld)}\nu^{\frac{1}{2}b\vep}.
	\end{align*}
	Rearranging the above equation and observe that the power of $\nu^{\vep}$ is $\nu^{\frac{1}{6}\vep a + \frac{1}{3}\vep b} \leq 1$ then completes the proof of Lemma \ref{iter}.
\end{proof}

\begin{lemma}\label{linalg}
	If $\alpha_0 = 0$, $\beta_0 = 0$, and
	\begin{align}\label{linalg1}
	\begin{pmatrix}
	\alpha_{N + 1}\\
	\beta_{N + 1}
	\end{pmatrix} =
	\begin{pmatrix}
	A\\
	B
	\end{pmatrix} +
	\begin{pmatrix}
	0 & -1/3\\
	1/2 & 7/6
	\end{pmatrix}
	\begin{pmatrix}
	\alpha_N\\
	\beta_N
	\end{pmatrix},
	\end{align}
	then
	\begin{align*}
	\begin{pmatrix}
	\alpha_N\\
	\beta_N
	\end{pmatrix}=\frac{(A + 2B)N}{5}
	\begin{pmatrix}
	-1 + \frac{36}{5}(1 - \frac{1}{6^N})\frac{A}{(A + 2B)N} + \frac{12}{5}(1 - \frac{1}{6^N})\frac{B}{(A + 2B)N}\\
	3 - \frac{18}{5}(1 - \frac{1}{6^N})\frac{A}{(A + 2B)N} - \frac{6}{5}(1- \frac{1}{6^N})\frac{B}{(A + 2B)N}
	\end{pmatrix}.
	\end{align*}
\end{lemma}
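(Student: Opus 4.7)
The plan is to treat the recursion as an affine map and solve it by diagonalizing the linear part. Writing $M = \begin{pmatrix} 0 & -1/3 \\ 1/2 & 7/6 \end{pmatrix}$ and $c = (A,B)^{T}$, the recursion $x_{N+1} = c + M x_N$ with $x_0 = 0$ unfolds to
\[
\begin{pmatrix} \alpha_N \\ \beta_N \end{pmatrix} = \Bigl( I + M + M^2 + \dots + M^{N-1}\Bigr) c = \sum_{k=0}^{N-1} M^k c.
\]
So the whole lemma reduces to computing this matrix geometric series applied to $c$, which is most naturally done by expressing $c$ in an eigenbasis of $M$.

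First I would compute the characteristic polynomial $\det(M - \lambda I) = \lambda^2 - \tfrac{7}{6}\lambda + \tfrac{1}{6}$, which factors as $(\lambda - 1)(\lambda - \tfrac{1}{6})$. A direct check gives eigenvectors $v_1 = (-1, 3)^T$ for $\lambda_1 = 1$ and $v_2 = (2, -1)^T$ for $\lambda_2 = 1/6$. Next I would decompose $c = a_1 v_1 + a_2 v_2$: solving the $2 \times 2$ system $-a_1 + 2a_2 = A$, $3a_1 - a_2 = B$ gives $a_1 = \tfrac{A + 2B}{5}$ and $a_2 = \tfrac{3A + B}{5}$.

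With this decomposition, since $M^k v_1 = v_1$ and $M^k v_2 = (1/6)^k v_2$, the sum splits as
\[
\sum_{k=0}^{N-1} M^k c = \frac{A + 2B}{5} \cdot N \cdot v_1 + \frac{3A + B}{5} \cdot \frac{1 - 6^{-N}}{1 - 1/6} \cdot v_2,
\]
and the geometric sum on the right equals $\tfrac{6}{5}(1 - 6^{-N})$. Writing the two components out gives
\[
\alpha_N = -\frac{(A + 2B)N}{5} + \frac{12(3A + B)}{25}\bigl(1 - 6^{-N}\bigr), \quad \beta_N = \frac{3(A + 2B)N}{5} - \frac{6(3A + B)}{25}\bigl(1 - 6^{-N}\bigr).
\]
The last step is purely algebraic: factor $\tfrac{(A+2B)N}{5}$ out of each expression and verify that the resulting coefficients inside the brackets match those in the statement. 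The main (and essentially only) obstacle is bookkeeping with the fractions $\tfrac{36}{5}, \tfrac{12}{5}, \tfrac{18}{5}, \tfrac{6}{5}$ when pulling $\tfrac{(A+2B)N}{5}$ out, so I would double-check that $\tfrac{12(3A+B)}{25} = \tfrac{(A+2B)N}{5} \cdot \bigl(\tfrac{36}{5}\tfrac{A}{(A+2B)N} + \tfrac{12}{5}\tfrac{B}{(A+2B)N}\bigr)$ and similarly for $\beta_N$, at which point the lemma is established.

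Alternatively, one could verify the claimed closed form directly by induction on $N$ using \eqref{linalg1}: the base case $N = 0$ is immediate, and the inductive step reduces to checking two scalar identities among the coefficients of $N$, $A$, $B$, and $6^{-N}$. Either route is elementary; I would prefer the eigenbasis route as it explains the structure (in particular, where the $6^{-N}$ and the linear-in-$N$ terms come from, namely the two eigenvalues $1/6$ and $1$ of $M$).
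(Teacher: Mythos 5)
Your proposal is correct and follows essentially the same route as the paper: unroll the affine recursion to $\sum_{k=0}^{N-1} M^k c$, diagonalize $M$ with eigenvalues $1$ and $1/6$ and eigenvectors $(-1,3)^T$, $(2,-1)^T$ (the paper's $P$ has these as columns up to sign), and sum the geometric series in each eigendirection. All your intermediate quantities ($a_1=\tfrac{A+2B}{5}$, $a_2=\tfrac{3A+B}{5}$, the factor $\tfrac{6}{5}(1-6^{-N})$) check out and reproduce the stated formula.
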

\begin{proof}
	This is as in Section 4 of \cite{HB15}. Let
	\begin{align*}
	M =\begin{pmatrix}
	0 & -1/3\\
	1/2 & 7/6
	\end{pmatrix},
	\quad
	P =\begin{pmatrix}
	-1 & -2\\
	3 & 1
	\end{pmatrix},
	\quad \text{and}\quad
	D = \begin{pmatrix}
	1 & 0 \\
	0 & 1/6
	\end{pmatrix}.
	\end{align*}
	Then $M = PDP^{-1}$. Iterating \eqref{linalg1} gives
	\begin{align*}
	\begin{pmatrix}
	\alpha_N\\
	\beta_N
	\end{pmatrix} =
	&\frac{1}{5}\begin{pmatrix}
	-1 & -2\\
	3 & 1
	\end{pmatrix}
	\begin{pmatrix}
	N & 0\\
	0& \frac{6}{5}(1 - \frac{1}{6^N})
	\end{pmatrix}
	\begin{pmatrix}
	1 & 2\\
	-3 & -1
	\end{pmatrix}
	\begin{pmatrix}
	A\\
	B
	\end{pmatrix}\\
	&=\frac{(A + 2B)N}{5}
	\begin{pmatrix}
	-1 + \frac{36}{5}(1 - \frac{1}{6^N})\frac{A}{(A + 2B)N} + \frac{12}{5}(1 - \frac{1}{6^N})\frac{B}{(A + 2B)N}\\
	3 - \frac{18}{5}(1 - \frac{1}{6^N})\frac{A}{(A + 2B)N} - \frac{6}{5}(1- \frac{1}{6^N})\frac{B}{(A + 2B)N}
	\end{pmatrix}.
	\end{align*}
	This completes the proof of Lemma \ref{linalg}.
\end{proof}

We derive a contradiction.
	Setting
	$A =-5/36$ and
	$B = 5/72 - \ld/2$, we observe that $A + 2B = -\ld$.
	
	By trivially controlling the bilinear constant
	by the linear constant, if $\delta$ and $\nu$ are such that $\nu^{b}\delta^{-1} \in \N$,
	then $\mc{M}_{2, a, b}(\delta, \nu, E) \lsm_{\vep, E} \delta^{-\frac{1}{4} - \ld - \vep}$
	for all $1 \leq a \leq b$.
	Setting $\alpha_0 = 0$ and $\beta_0 = 0$ and using that $A + 2B = -\ld$, Lemma \ref{linalg}
	shows that
	\begin{align*}
	\alpha_N + \beta_N = -\frac{\ld N}{5}(2 + \frac{6}{5N}(1 - \frac{1}{6^N})(\frac{25}{72\ld} + \frac{1}{2})).
	\end{align*}
	Since $\ld > 0$, we can choose an $N_0$ sufficiently large (depending on $\ld$) such that
	$\alpha_{N_0}+ \beta_{N_0} < -1001 - \frac{10}{3}C_0$.
	Lemma \ref{iter} then shows that if $\delta \in \N^{-1}$ and $\nu \in 2^{-2^{\N}} \cap (0, 1/1000)$ are such that $\nu^{3^{N_0}}\delta^{-1} \in \N$, then
	$$\mc{M}_{2, 1, 1}(\delta, \nu, E) \lsm_{N_0, \vep, E} \delta^{-\frac{1}{4} - \ld - \vep}\nu^{1001}.$$
    Now choose $E > 1000$ to be a sufficiently large power of $C_0$ (depending on $N_0$).
	Bilinear reduction (Lemma \ref{bilinear}) then shows that if $\delta \in \N^{-1}$ and $\nu \in 2^{-2^{\N}} \cap (0, 1/1000)$ are such that $\nu^{3^{N_0}}\delta^{-1} \in \N$, we have
	\begin{align*}
	D(\delta) &\lsm_{N_0, \vep} \nu^{-\frac{1}{4}}D(\frac{\delta}{\nu}) + \delta^{-\frac{1}{4}- \ld - \vep}\nu^{1000}\\
	&\lsm_{N_0, \vep} \delta^{-\frac{1}{4} - \ld - \vep}(\nu^{\ld + \vep} + \nu^{1000})\lsm_{N_0, \vep} \delta^{-\frac{1}{4} - \ld - \vep}\nu^{\ld}
	\end{align*}
	where the last inequality is because $\ld \leq 1/2$.
	Choosing $\nu = \delta^{1/3^{N_0}}$, then shows that if $\delta$ is such that $\delta^{1/3^{N_0}} \in 2^{-2^{\N}} \cap (0, 1/1000)$, then
	\begin{align*}
	D(\delta) \lsm_{N_0, \vep} \delta^{-\frac{1}{4} -\ld(1 - \frac{1}{3^{N_0}}) - \vep}.
	\end{align*}
	Corollary \ref{almult} (almost multiplicity) then shows that $D(\delta) \lsm_{N_0, \vep} \delta^{-\frac{1}{4} -\ld(1 - \frac{1}{3^{N_0}}) - \vep}$
	for all $\delta \in \N^{-1}$.
	This contradicts the minimality of $\ld$. Therefore we cannot have $\ld > 0$ and hence we must have $\ld = 0$.
	This completes the proof of Theorem \ref{main}.

\end{document}